\documentclass{siamart171218}

\usepackage{lipsum}
\usepackage{amsfonts}
\usepackage{amssymb}
\usepackage{bm}
\usepackage{graphicx,epstopdf} % <- Preamble
\usepackage[caption=false]{subfig} % <- Preamble
\usepackage{tikz}
\usetikzlibrary{arrows.meta, positioning}
\usepackage{epstopdf}
\usepackage{algorithmic}
\ifpdf
  \DeclareGraphicsExtensions{.eps,.pdf,.png,.jpg}
\else
  \DeclareGraphicsExtensions{.eps}
\fi

\newsiamremark{remark}{Remark}
\newsiamremark{hypothesis}{Hypothesis}
\crefname{hypothesis}{Hypothesis}{Hypotheses}
\newsiamthm{claim}{Claim}

\headers{}{Xukun Wang, Suyash Shrestha, Oscar A. Marino and Esteban Ferrer}

\title{Projection-Based Reconstruction for Achieving High-Order Accuracy from Low-Order DGSEM Simulations\thanks{Submitted to the editors on XXX.
\funding{Xukun Wang acknowledges the financial support of the China Scholarship Council(CSC, project number: 202406290060). Esteban Ferrer and Oscar Marino acknowledge the funding from the European Union (ERC, Off-coustics, project number 101086075). Esteban Ferrer acknowledges the funding received by the Grant DeepCFD (Project No. PID2022-137899OB-I00) funded by MICIU/AEI/10.13039/501100011033 and by ERDF, EU.}}}

\author{
    Xukun Wang\thanks{ ETSIAE-UPM-School of Aeronautics, Plaza Cardenal Cisneros 3, E-28040 Madrid, Spain (\email{xukun.wang@alumnos.upm.es,oscar.marino@upm.es,esteban.ferrer@upm.es}).}
    \and
    Suyash Shrestha\thanks{ Delft University of Technology, Delft, Netherlands (\email{S.Shrestha-1@tudelft.nl}).}
    \and
    Oscar A. Marino\footnotemark[2]
    \and
    Esteban Ferrer\footnotemark[2]
}

\usepackage{amsopn}

\makeatletter
\newcommand*{\addFileDependency}[1]{% argument=file name and extension
  \typeout{(#1)}% latexmk will find this if $recorder=0 (however, in that case, it will ignore #1 if it is a .aux or .pdf file etc and it exists! if it doesn't exist, it will appear in the list of dependents regardless)
  \@addtofilelist{#1}% if you want it to appear in \listfiles, not really necessary and latexmk doesn't use this
  \IfFileExists{#1}{}{\typeout{No file #1.}}% latexmk will find this message if #1 doesn't exist (yet)
}
\makeatother

\newcommand*{\myexternaldocument}[1]{%
    \externaldocument{#1}%
    \addFileDependency{#1.tex}%
    \addFileDependency{#1.aux}%
}
\ifpdf
\hypersetup{
  pdftitle={Projection-Based Reconstruction for Achieving High-Order Accuracy from Low-Order DGSEM Simulations},
  pdfauthor={Xukun Wang, Suyash Shrestha, Oscar A. Marino and Esteban Ferrer}
}
\fi

\myexternaldocument{ex_supplement}

\begin{document}

\maketitle

% REQUIRED
\begin{abstract}
High-order discontinuous Galerkin spectral element methods (DGSEM) based on Legendre-Gauss-Lobatto (LGL) nodes provide accurate and efficient discretizations for conservation laws. However, their cost, memory footprint, and time-step restrictions increase rapidly when the degree of the polynomial increases. This paper develops a corrected $\mathbb{P}_n\mathbb{P}_m$ ($c\mathbb{P}_n\mathbb{P}_m$) approach for DGSEM-LGL discretizations that aims to recover the accuracy of an $m^{th}$-order approximation while evolving only the degrees of freedom associated with an $n^{th}$-order representation, with $n<m$. The projected evolution of the high-order components is derived first at the continuous level and then in the fully discrete DGSEM-LGL setting. The discrete analysis shows that because LGL quadrature is not exact for the highest Legendre mode, a correction term for that mode is required to preserve the order of convergence. A compact projection-based reconstruction operator is then introduced to recover high-order components without solving the enlarged constrained least-squares systems used in standard reconstruction procedures. For sufficiently smooth solutions, the resulting $c\mathbb{P}_n\mathbb{P}_m$ scheme is shown to achieve the expected $m+1^{th}$ convergence order. Numerical experiments for one- and two-dimensional conservation laws, including Euler, viscous Burgers, and 2D decaying homogeneous isotropic turbulence, confirm theoretical convergence behavior and demonstrate competitive accuracy relative to computational cost, with particularly clear efficiency gains for viscous flows.
\end{abstract}

% REQUIRED
\begin{keywords}
  Hyperbolic conservation laws, discontinuous Galerkin spectral element method, reconstructed Discontinuous Galerkin method, projection-based reconstruction
\end{keywords}

% REQUIRED
\begin{AMS}
  65M60, 65M70, 65M12, 65M15, 35L65, 76F05
\end{AMS}

%\tableofcontents

\section{Introduction}
The Discontinuous Galerkin (DG) method, since first introduced in 1973 by Reed and Hill \cite{osti_4491151}, has established itself as a powerful framework
for solving partial differential equations (PDEs) and has shown its distinguished capability in solving conservative systems in complex geometries, combining the high-order accuracy property of the finite element method (FEM) with the physical insights of Godunov-type methods (finite volume method, FVM) by applying a Riemann solver on the interfaces\cite{cockburn_tvb_1989, BR1, ferrer_high_2012}. Among alternative types, the collocation-type nodal DG \cite{hesthaven2008nodal} reduces computational complexity by representing the solution and fluxes using interpolation polynomials of the same degree. Furthermore, as one of the most efficient variants, the so-called discontinuous Galerkin collocation spectral element method (DGSEM) combines the flexibility of the nodal DG with spectral methods \cite{kopriva2009implementing, karniadakis_spectralhp_2005}, leading to outstanding performance in resolving multiscale phenomena in computational fluid dynamics (CFD)\cite{beck_highorder_2014,ferrer_horses3d_2023,kurz_galaexi_2025}, particularly when combined with Legendre–Gauss–Lobatto (LGL) quadrature nodes. The DGSEM-LGL approach \cite{gassner_comparison_2011} leverages collocation at quadrature points, leading to diagonal mass matrices and the discrete summation-by-parts (SBP) property, which is essential for building entropy-stable and energy-conservative schemes\cite{kopriva_quadrature_2010,fisher_high-order_2013, gassner_skew_symmetric_2013,kopriva_energy_2014, gassner_split_2016}. These features have made DGSEM-LGL a popular choice for large-scale simulations of compressible flows and turbulence.

Despite these advantages, the classic DG and DGSEM-LGL methods also face some challenges. First, achieving high-order accuracy typically requires increasing the order $p$ of the polynomial, leading to a rapid increase in the number of degrees of freedom per element. This, even worse, increases the computational cost and memory usage, which are bottlenecks in modern large-scale simulations. Second, a more restrictive CFL condition for the high-order scheme leads to an extremely small time step, which severely impairs efficiency. To address these issues, alternative approaches have been proposed to achieve high-order accuracy in low-order solutions. Dumbser et al.\cite{dumbser_unified_2008,dumbser_very_2009,dumbser_arbitrary_2010} proposed the $\mathbb{P}_n\mathbb{P}_m$ method, incorporating FVM and DG into a unified framework, where $\mathbb{P}_n$ polynomials are used to represent the solutions and $\mathbb{P}_m$ polynomials are reconstructed to compute the flux. Setting $n=0,\;m\geq2$ and $n=m\geq1$, it leads to the FVM and standard DG methods, respectively. Following a similar idea, Luo et al. \cite{luo_reconstructed_2009,luo_reconstructed_2010,lou_reconstructed_2018} developed the so-called reconstructed DG (rDG). Both are designed to improve the accuracy of the DG method and successfully achieve convergence of $m+1$-orders by evolving only solutions of $n^{th}$-orders. However, they share the same inevitable problem that high-order fluxes are directly involved when computing the spatial derivatives and that an additional high-order quadrature law needs to be implemented. 

To alleviate these problems, we combine the idea of the $\mathbb{P}_n\mathbb{P}_m$ method with DGSEM-LGL, and propose a novel framework: a corrected $\mathbb{P}_n\mathbb{P}_m$ method ($c\mathbb{P}_n\mathbb{P}_m$), where the $m+1$-order accuracy can be reached while all computations are implemented on the same low-order discrete LGL nodes. The projected high-order time-derivative is formulated exactly using projected high-order fluxes and low-order operators. The derivation shows that a correction term of highest Legendre mode has to be added to complement the inaccuracy of LGL quadrature and to maintain the designed convergence order.

In $\mathbb{P}_n\mathbb{P}_m$ and rDG methods, the reconstruction operator has a major influence on the accuracy of the resulting scheme. Unlike high-order finite-volume methods, in which each element typically contains only one degree of freedom, namely the cell average, discontinuous Galerkin methods provide several local degrees of freedom for $p \geq 1$. This additional information enables the use of highly compact reconstruction stencils. For sufficiently smooth solutions, a linear reconstruction operator is generally sufficient to achieve the target order of accuracy, and several reconstruction strategies have been proposed in the literature. For example, Dumbser et al.\cite{dumbser_arbitrary_2007,dumbser_unified_2008} developed a $L^2-$projection-based reconstruction method, while Luo et al. \cite{luo_reconstructed_2009,luo_reconstructed_2010} established the over-determined system by imposing constraints on the mean of the solution and its derivatives. Alternatively, Wang et al. \cite{wang_pnpm-cpr_2011} rebuilt the high-order solution based on the nodal values on the stencil within the framework of the flux reconstruction (FR) method \cite{huynh_flux_2007}. More recently, a global reconstruction method based on a variational formulation has been proposed by Li et al.\cite{li_reconstructed_2022}. As for the discontinuous solution, nonlinear reconstructors are indispensable to maintain the desired accuracy, which has been a research hot-spot in the fields of CFD for the last century. Namely, to capture local discontinuities in the numerical solution (e.g., shocks), a large number of advanced non-linear reconstruction methods have been well researched in FVM, among which the most popular is the family of essentially non-oscillatory (ENO) schemes \cite{harten_uniformly_1987}, such as weighted ENO (WENO)\cite{jiang_efficient_1996,qiu_hermite_2004} and targeted ENO (TENO)\cite{fu_low-dissipation_2019}. 

In this paper, we focus on enhancing the accuracy of smooth solutions instead of capturing discontinuities. Following the idea of general conservation and local $L^2-$projection, we develop a novel projection-based reconstruction algorithm. Instead of solving a constrained least-squares (LS) problem in an enlarged system in \cite{dumbser_unified_2008}, high-order components are directly computed from an equivalent small-scale (short stencil) problem, which increases the efficiency of the reconstruction. Coupling this reconstructor with the proposed corrected $\mathbb{P}_n\mathbb{P}_m$ framework leads to a highly-efficient new scheme, which is comparable to standard DGSEM in terms of accuracy per Flop with fewer memory requirements. The numerical results demonstrate its potential for large-scale simulation, especially in the case of limited memory and bandwidth. What's more, a brief error estimate is also provided, which demonstrates how the error of reconstruction affects the evolution of the total error.

The remainder of the paper is organized as follows. Section 2 introduces some background knowledge, and Section 3 provides an analysis of scale separation in the continuous case.  In Section 4, the simplified form of the projected time-derivative is carefully derived term by term. The novel modal reconstructor is introduced, and a brief error estimate is given in Section 5. Section 6 details the construction and convergence proof of the corrected $\mathbb{P}_n\mathbb{P}_m$ scheme. Section 7 provides the numerical results on one-dimensional linear advection, the viscous Burgers' equation, the Euler equation, two-dimensional isentropic vortex, and decaying homogeneous isotropic turbulence. Finally, the conclusion and future outlook are given in Section 8.

% The outline is not required, but we show an example here.
%The paper is organized as follows. Our main results are in
%\cref{sec:main}, our new algorithm is in \cref{sec:alg}, experimental
%results are in \cref{sec:experiments}, and the conclusions follow in
%\cref{sec:conclusions}.

\section{Preliminaries and Background}
\label{sec:basic_background}

For clarity, we first present the derivation in one dimension. The extension to two-dimensional cases is introduced in a later section, but the same methodology can be generalized straightforwardly to three dimensions.
\subsection{Problem formulation} Considering the general one-dimensional conservative system:
\begin{align}
    \label{eq:1d_conservation_law}
    \begin{split}
        \partial_t\vec{q} + \partial_x\vec{f}(\vec{q})&=0,\;(x,t)\in \Omega \times(0,+\infty),\\
        \vec{q} &= \vec{q}_0,\;\text{on } \Omega \times\{t=0\}\\
        \vec{q} &= \vec{q}_b,\;\text{on } \partial\Omega \times(0,+\infty)
    \end{split}        
\end{align}
where $\Omega \subseteq \mathbb{R}$, $x\in \Omega$ is the spatial coordinate, $t\in(0,+\infty)$ is the time, $\vec{q}:\Omega\times(0,+\infty)\to \mathbb{R}^s$, where $s$ is the number of components in the vector of conservative variables $\vec{q}$. $\vec{f}(\vec{q})$ is the vector of fluxes, $\vec{q}_0$ and $\vec{q}_b$ are the initial and boundary conditions, respectively. 

Assuming that $\mathcal{T}_h$ is a tessellation of the domain $\Omega$ into non-overlapping $K$ elements: $\Omega_i = [x_{i-1},x_{i}],i=1,2,\dots,K$(mesh), by transforming the physical element $\Omega_i$ into the reference one $E$, taking the inner product with the test function $\phi$, applying the integration by parts twice and replacing the boundary flux by a numerical one $\vec{f}^{\ast}$, the general strong-form DG formulation reads:
\begin{equation}
\label{eq:strong_form}
    \mathcal{J}_i\langle \partial_t\vec{q}\,,\phi \rangle + \left(\vec{f}^{\ast}-\vec{f}\right)\phi\Big\vert_{-1}^1+\langle\vec{f}_{\xi}\,,\phi\rangle = 0,
\end{equation}
where $\mathcal{J}_i$ is the Jacobian of the transfinite mapping from $E$ to $\Omega_i$ and $\langle \cdot\,,\cdot\rangle$ denotes the inner product of two functions defined in $E$.

\subsection{Discretization} We follow the standard discretization process in DGSEM-LGL\cite{gassner_comparison_2011,gassner_skew_symmetric_2013} and take only one component of the conservative system $u$ for simplicity:
\begin{equation}  
\label{eq:DGSEM-LGL_0}
\mathcal{J}_i\underline{\dot{u}}+\underline{\underline{M}}^{-1}\underline{\underline{B}}(\underline{f}^{\ast} - \underline{f})+\underline{\underline{D}}\,\underline{f} = 0,
\end{equation}
where $\underline{u}=[u_0,\,u_1,\dots,u_n]^{\top}\in \mathbb{R}^{n+1}$ and $\underline{f}=[f_0,\,f_1,\dots,f_n]^{\top}\in \mathbb{R}^{n+1}$ are the vectors of the nodal values of the solution and the corresponding flux in the element of $n^{th}$-order. $\underline{\underline{M}} = diag([\omega_0,\omega_1,\dots,\omega_n])$ is the so-called mass matrix (diagonal matrix of GL quadrature weights $w_i$). $\underline{\underline{B}} = diag([-1,\,0,\dots,0,\,1])$, $\underline{f}^{\ast} = [f^{\ast}_L,\, 0,\dots,\,0,\,f^{\ast}_R]^{\top}$ and $\underline{\underline{D}}$ is the derivative matrix. Alternatively, a more explicit form reads:
\begin{equation}
\label{eq:DGSEM-LGL_1}
    \mathcal{J}_i\underline{\dot{u}}+\frac{(f^{\ast}_L - f_0)}{\omega_0}\underline{b}_L + \frac{(f^{\ast}_R - f_n)}{\omega_n}\underline{b}_R+\underline{\underline{D}}\,\underline{f} = 0,
\end{equation}
where $\underline{b}_L = [-1,\,0,\dots,0]^{\top}$ and $\underline{b}_R = [0,\dots,0,\,1]^{\top}$. For a detailed description of the discretization process that leads to \cref{eq:DGSEM-LGL_0} and \cref{eq:DGSEM-LGL_1}, the reader is referred to \cref{sec: AppendixA}.

\subsection{Notations of high-order and low-order approximations}
Since approximations of two different polynomial orders are used throughout the paper, we first clarify the notation. To remain consistent with the conventional $\mathbb{P}_n\mathbb{P}_m$ framework, the low- and high-order approximations are denoted by $n$ and $m$, respectively. Consequently, vectors and matrices with superscript $(m)$ refer to their high-order forms, whereas those with superscript $(n)$ denote the corresponding low-order forms. In modal space, filtering is performed by truncating the higher-order modes. Therefore, the vector of high-order modal coefficients can be decomposed into low- and high-order components:
\begin{equation}
\label{eq:u_f_vectors_decomposition}
    \underline{\hat{u}}^{(m)} = \left[\underline{\hat{u}}^{(n)\top},\underline{\tilde{\hat{u}}}^{\top}\right]^{\top}\in \mathbb{R}^{m+1},
\end{equation}
where $\underline{\hat{u}}^{(n)} = \overline{\underline{\hat{u}}^{(m)}} = [\hat{u}_0,\,\hat{u}_1,\dots,\hat{u}_n]^{\top}\in \mathbb{R}^{n+1}$ and $\underline{\tilde{\hat{u}}} = [\hat{u}_{n+1},\,\hat{u}_{n+2},\dots,\hat{u}_m]^{\top}\in \mathbb{R}^{r}$ contain only $r=m-n$ high-order components higher than $n$ and the same for the flux $f$. Equivalently, we decompose the high-order solution, $u^{(m)}$, into a filtered part and high-order components:
\begin{equation}
\label{eq:u_decomposition}
    u^{(m)} = \overline{u^{(m)}} + \tilde{u} = u^{(n)} + \tilde{u}.
\end{equation}
Without specific instructions, the nodal representations of solution are expressed on the LGL nodes of the same order, i.e, 
\begin{equation}
    \underline{u}^{(m)} = \left[ u^{(m)}(\xi_0^{(m)}),\, u^{(m)}(\xi_1^{(m)}), \dots, u^{(m)}(\xi_m^{(m)})  \right]^{\top}\in \mathbb{R}^{m+1},
\end{equation}
where $\{\xi^{(m)}_i \}_{i=0}^m$ denotes the set of $m^{th}$-order LGL nodes and the same for $n^{th}$-order case. 

\section{Scale separation in the continuous case}
Starting from the integrated strong form in \cref{eq:strong_form}, we split the approximation space into two mutually $L^2$-orthogonal subspaces (low-order and higher-order space) using a $L^2$-projector $P$:
\begin{equation}    \mathcal{V}=\overline{\mathcal{V}}\oplus\widetilde{\mathcal{V}},
\end{equation}
where $\overline{\mathcal{V}} = \text{Range}\left( P \right)$ and $\widetilde{\mathcal{V}} = \text{ker}\left( P \right)$. The uniqueness of this decomposition follows from the boundedness of the projector $P$.
The same can be done for the solution and test function:
\begin{equation}
\label{eq:decomposition_VMS}
    q=\overline{q}+\tilde{q},\;\phi = \overline{\phi}+\tilde{\phi},
\end{equation}
where $\overline{q},\overline{\phi}\in\overline{\mathcal{V}}$ and $\tilde{q},\tilde{\phi}\in\widetilde{\mathcal{V}}$. This decomposition is similar to that of the variational multiscale method (VMS)\cite{hughes_variational_1998}, where $\overline{q}$ and $\tilde{q}$ are typically called \emph{coarse scale} and \emph{fine scale}, respectively. Taking a set of $L^2$-orthogonal basis $\{\phi_i\}_{i=0}^{\infty}$ that span the space $\mathcal{V}$, these two subspaces can be expanded by the following:
\begin{equation}
    \overline{\mathcal{V}} = span\{ \phi_0,\phi_1,\dots,\phi_n\}, \;\widetilde{\mathcal{V}}=span\{\phi_{n+1},\phi_{n+2},\dots\}.
\end{equation}
It should be noted that the dimension of $\overline{\mathcal{V}}$ is $n+1$ while the dimension of $\widetilde{\mathcal{V}}$ is infinite. To obtain an exact evolution of low-order components, (the coarse scale $\overline{q})$, we just take the weight function as the low-order one, $\overline{\phi}$, in the integrated strong form \cref{eq:strong_form} and insert the decomposition \cref{eq:decomposition_VMS}:
\begin{equation}
    \mathcal{J}_i\langle \partial_t\overline{q} + \partial_t\tilde{q}, \overline{\phi}\rangle+\left( f^{\ast}-f\right)\overline{\phi}\Big\vert_{-1}^1+\langle \overline{\partial_{\xi}f}+\widetilde{\partial_{\xi}f}, \overline{\phi}\rangle=0,
\end{equation}
where the same decomposition is also applied to the derivative of the flux:
\begin{equation}
    \partial_{\xi}f = \overline{\partial_{\xi}f}+\widetilde{\partial_{\xi}f},\;\overline{\partial_{\xi}f}\in\overline{\mathcal{V}},\,\widetilde{\partial_{\xi}f}\in\widetilde{\mathcal{V}}.
\end{equation}
Using the orthogonality of $\{\phi_i\}_{i=0}^{\infty}$, i.e., $\langle\phi_i,\phi_j\rangle=\delta_{ij}$ (here the basis can be considered as the global Legendre modes and the $L_2$-orthogonality is ensured), we obtain the following
\begin{equation}
\label{eq:strong_form_VMS_0}
    \mathcal{J}_i\langle \partial_t\overline{q}, \overline{\phi}\rangle+\left( f^{\ast}-f\right)\overline{\phi}\Big\vert_{-1}^1+\langle \overline{\partial_{\xi}f}, \overline{\phi}\rangle=0.
\end{equation}
We emphasize that projection and differentiation operators do not commute with each other, i.e., $\overline{\partial_{\xi}f}\neq\partial_{\xi}\overline{f}$. However, we will show that $\overline{\partial_{\xi}f}$ can be expressed in terms of $\partial_{\xi}\overline{f}$ plus an additional surface term.
Considering
\begin{equation}
    \overline{\partial_{\xi}f} = \partial_{\xi}\overline{f} + \left( \overline{\partial_{\xi}f}- \partial_{\xi}\overline{f} \right),
\end{equation}
where
\begin{equation}
    \overline{\partial_{\xi}f}- \partial_{\xi}\overline{f} = \overline{\partial_{\xi}f- \partial_{\xi}\overline{f}} = \overline{\partial_{\xi}\tilde{f}}.
\end{equation}
Although $\tilde{f}\in\widetilde{\mathcal{V}}$, the differentiation operator, $\partial_{\xi}$, will drop some components of $\tilde{f}$ into the low-order space, thus $\overline{\partial_{\xi}\tilde{f}}\neq0$. Therefore, the last term in \cref{eq:strong_form_VMS_0} can be reformulated
\begin{multline}
\label{eq: partial_xi_phi}
    \langle \overline{\partial_{\xi}f}, \overline{\phi}\rangle = \langle \partial_{\xi}\overline{f}, \overline{\phi}\rangle + \langle \overline{\partial_{\xi}\tilde{f}}, \overline{\phi}\rangle=\langle \partial_{\xi}\overline{f}, \overline{\phi}\rangle + \langle \partial_{\xi}\tilde{f}, \overline{\phi}\rangle\\
    =\langle \partial_{\xi}\overline{f}, \overline{\phi}\rangle + \tilde{f}\,\overline{\phi}\Big\vert_{-1}^1-\langle \tilde{f}, \partial_{\xi}\overline{\phi}\rangle= \langle \partial_{\xi}\overline{f}, \overline{\phi}\rangle + \tilde{f}\,\overline{\phi}\Big\vert_{-1}^1,
\end{multline}
where integration by parts has been used. Substituting \cref{eq: partial_xi_phi} into \cref{eq:strong_form_VMS_0} and using $f = \overline{f}+\tilde{f}$ produces
\begin{equation}
\label{eq: exact_low_order_evolution}
    \mathcal{J}_i\langle \partial_t\overline{q}, \overline{\phi}\rangle+\left( f^{\ast}-\overline{f}\right)\overline{\phi}\Big\vert_{-1}^1+\langle \partial_{\xi}\overline{f}, \overline{\phi}\rangle=0.
\end{equation}
    
\begin{remark}
    From \cref{eq: exact_low_order_evolution} we can conclude that the exact evolution of the coarse scale is governed by the flux in low-order space, $\overline{f}$, and the exact interfacial numerical fluxes $f^{\ast}$, and it is equivalent to taking $\overline{\phi}$ as the test function in \cref{eq:strong_form}.
\end{remark}

\section{Derivation on projected high-order time-derivative}
Instead of discretizing \cref{eq: exact_low_order_evolution} directly, we derive the projected high-order time derivatives by applying the projector to the entire semi-discrete form of DGSEM-LGL. All discrete operators used in the following are well defined, including the transformation operators between the nodal and modal spaces, i.e., $\underline{\underline{V}}$ and $\underline{\underline{V}}^{-1}$, and the projection and interpolation matrices, i.e., $\underline{\underline{\Pi}}$ and $\underline{\underline{\Pi}}^{\ast}$. For readers unfamiliar with these operators, details are included in \cref{sec: AppendixB}. We will show that the numerical scheme obtained using this approach mimics the continuous one \cref{eq: exact_low_order_evolution} discretely. The projected high-order time derivative reads as follows:
\begin{equation}
\label{eq:projected_DGESM_0}    \mathcal{J}_i\overline{\underline{\dot{u}}^{(m)}}+\underline{\underline{\Pi}}\left[\frac{\left(f^{\ast(m)}_L - f_0^{(m)}\right)}{\omega_0^{(m)}}\underline{b}_L^{(m)} + \frac{\left(f^{\ast(m)}_R - f_m^{(m)}\right)}{\omega_m^{(m)}}\underline{b}_R^{(m)}+\underline{\underline{D}}^{(m)}\,\underline{f}^{(m)}\right] = 0.
\end{equation}

For clarity, we deal with the surface and volume terms separately in the following.

\subsection{Projected surface term}
\label{sec:projected_surface_term}
Using the definition of the projection operator in \cref{eq:projection}, the projected surface terms are:
\begin{equation}
    \frac{\left(f^{\ast(m)}_L - f_0^{(m)}\right)}{\omega_0^{(m)}}\underline{\underline{\Pi}}\,\underline{b}^{(m)}_L,\;\frac{\left(f^{\ast(m)}_R - f_m^{(m)}\right)}{\omega_m^{(m)}}\underline{\underline{\Pi}}\,\underline{b}^{(m)}_R.
\end{equation}
Using the explicit form of the inverse Vandermonde matrix \cref{eq: inverse_Vandermonde} together with the boundary values of Legendre polynomials($L_k(\xi_0) = (-1)^{k}$, $L_k(\xi_m) = 1,\,k=0,1,\dots,m$), one obtains
\begin{equation}
\label{eq: Pi_b_L}
    \frac{1}{\omega_0^{(m)}}\underline{\underline{\Pi}}\,\underline{b}^{(m)}_L = \underline{\underline{V}}^{(n)}\left[\underline{\underline{\hat{M}}}^{(n)}\right]^{-1}{\underline{\underline{V}}^{(n)}}^{\top}\underline{b}_L^{(n)},
\end{equation}
and similarly,
\begin{equation}
\label{eq: Pi_b_R}
    \frac{1}{\omega_m^{(m)}}\underline{\underline{\Pi}}\,\underline{b}^{(m)}_R = \underline{\underline{V}}^{(n)}\left[\underline{\underline{\hat{M}}}^{(n)}\right]^{-1}{\underline{\underline{V}}^{(n)}}^{\top}\underline{b}_R^{(n)}.
\end{equation}
Furthermore, the following Lemma provides a convenient simplification.
\begin{lemma}
\label{lemma:VNVT_VtildeNVT}
For the Legendre Vandermonde matrix evaluated in the $n^{th}$-order LGL nodes, $\underline{\underline{V}}^{(n)}$, together with the corresponding nodal mass matrix, $\underline{\underline{M}}^{(n)}$ and the modal mass matrix, $\underline{\hat{\underline{M}}}^{(n)}$, the following relation holds:
\begin{displaymath}    \underline{\underline{V}}^{(n)}\left[\underline{\underline{\hat{M}}}^{(n)}\right]^{-1}{\underline{\underline{V}}^{(n)}}^{\top} = \left[\underline{\underline{M}}^{(n)}\right]^{-1} + \frac{n+1}{2}{\underline{v}_n^{(n)}}{{\underline{v}_n^{(n)}}}^{\top},
\end{displaymath}
where $\underline{v}_n^{(n)}$ is the last column of $\underline{\underline{V}}^{(n)}$.
\end{lemma}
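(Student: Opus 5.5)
Our approach is to compare the discrete (LGL) mass matrix in modal space with the exact modal mass matrix; their only disagreement is in the highest mode. Write $\underline{\underline{V}}=\underline{\underline{V}}^{(n)}$ with entries $V_{ij}=L_j(\xi_i)$, $\underline{\underline{M}}=\underline{\underline{M}}^{(n)}=\mathrm{diag}(\omega_0,\dots,\omega_n)$ and $\underline{\underline{\hat{M}}}=\underline{\underline{\hat{M}}}^{(n)}=\mathrm{diag}\bigl(\tfrac{2}{2k+1}\bigr)_{k=0}^n$. The starting observation is that $\underline{\underline{V}}^{\top}\underline{\underline{M}}\,\underline{\underline{V}}$ has entries $\sum_i\omega_i L_j(\xi_i)L_k(\xi_i)$, i.e.\ the LGL quadrature of $\langle L_j,L_k\rangle$. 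Since LGL quadrature with $n+1$ nodes is exact for polynomials of degree up to $2n-1$, every entry with $j+k\le 2n-1$ equals the exact value $\delta_{jk}\tfrac{2}{2k+1}$. The only entry not covered is $j=k=n$, where the known LGL identity gives $\sum_i\omega_i L_n(\xi_i)^2=\tfrac{2}{n}$ instead of $\tfrac{2}{2n+1}$. (For $j+k=2n$ with $j\neq k$ one index exceeds $n$, so this does not occur.) Hence
\begin{displaymath}
\underline{\underline{V}}^{\top}\underline{\underline{M}}\,\underline{\underline{V}}=\underline{\underline{\hat{M}}}+\gamma\,\underline{e}_n\underline{e}_n^{\top},\qquad \gamma=\frac{2}{n}-\frac{2}{2n+1}=\frac{2(n+1)}{n(2n+1)}.
\end{displaymath}
The value $\tfrac{2}{n}$ follows from the standard argument: write $L_n^2$ as a degree-$2n$ polynomial and use the node property $(1-\xi^2)L_n'(\xi)=0$ at the LGL nodes. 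This step can be cited from the appendix or standard references such as Kopriva.

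Next we invert. Since $\underline{\underline{V}}$ is invertible, $\underline{\underline{M}}^{-1}=\underline{\underline{V}}\bigl(\underline{\underline{\hat{M}}}+\gamma\underline{e}_n\underline{e}_n^{\top}\bigr)^{-1}\underline{\underline{V}}^{\top}$. Both matrices in the bracket are diagonal, so the inverse is diagonal and differs from $\underline{\underline{\hat{M}}}^{-1}$ only in the $(n,n)$ entry, which becomes $\tfrac{n}{2}$ instead of $\tfrac{2n+1}{2}$. Therefore
\begin{displaymath}
\underline{\underline{\hat{M}}}^{-1}=\bigl(\underline{\underline{\hat{M}}}+\gamma\underline{e}_n\underline{e}_n^{\top}\bigr)^{-1}+\Bigl(\frac{2n+1}{2}-\frac{n}{2}\Bigr)\underline{e}_n\underline{e}_n^{\top}=\bigl(\underline{\underline{\hat{M}}}+\gamma\underline{e}_n\underline{e}_n^{\top}\bigr)^{-1}+\frac{n+1}{2}\underline{e}_n\underline{e}_n^{\top}.
\end{displaymath}
Conjugating by $\underline{\underline{V}}(\cdot)\underline{\underline{V}}^{\top}$ and using $\underline{\underline{V}}\,\underline{e}_n=\underline{v}_n^{(n)}$ gives exactly the claimed identity.

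The main obstacle is the exact value of the discrete norm $\sum_i\omega_iL_n(\xi_i)^2=2/n$. We would derive it directly as follows. Write $L_n^2=\tfrac{(2n-1)!!^2}{(n!)^2}\cdots$, or more cleanly use that the LGL nodes are the roots of $(1-\xi^2)L_n'$. This gives $L_n^2=\alpha(1-\xi^2)L_n'\,L_{n-1}+r$ with $\deg r\le 2n-1$, so the quadrature error is $-\alpha\int(1-\xi^2)L_n'L_{n-1}$. That integral is evaluated through integration by parts and the Legendre ODE.
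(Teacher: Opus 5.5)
Your proof is correct and follows the paper's argument. Both identify $\underline{\underline{V}}^{\top}\underline{\underline{M}}\,\underline{\underline{V}}$ with the modified modal mass matrix $\underline{\underline{\hat{M}^{\prime}}}$ (last entry $2/n$), invert to get $\underline{\underline{M}}^{-1}=\underline{\underline{V}}\,[\underline{\underline{\hat{M}^{\prime}}}]^{-1}\underline{\underline{V}}^{\top}$, and use that the two diagonal inverses differ only by $\frac{n+1}{2}\underline{e}_n\underline{e}_n^{\top}$. The one difference is how you obtain $\sum_i\omega_iL_n(\xi_i)^2=2/n$: the paper gets it in one line from the LGL weight formula $\omega_i=2/(n(n+1)L_n^2(\xi_i))$, so your quadrature-error computation, though valid, is unnecessary.
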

\begin{proof}
    Using the relation between the exact and modified modal mass matrices,
    \begin{displaymath}
        \underline{\underline{\hat{M}}}^{(n)} = \underline{\underline{\hat{M}^{\prime}}}^{(n)} + \left( \left[ \underline{\hat{M}}^{(n)} \right]_{nn} - \left[ \underline{\underline{\hat{M}^{\prime}}}^{(n)} \right]_{nn} \right){\underline{e}_n^{(n)}}{\underline{e}_n^{(n)}}^{\top},
    \end{displaymath}
    together with
    \begin{displaymath}
        \left[\underline{\underline{M}}^{(n)}\right]^{-1} =\underline{\underline{V}}^{(n)}\left[\underline{\underline{\hat{M}^{\prime}}}^{(n)}\right]^{-1}{\underline{\underline{V}}^{(n)}}^{\top},
    \end{displaymath}
    immediately gives the result.
\end{proof}

%Integrating \cref{lemma:VNVT_VtildeNVT} into \cref{eq: Pi_b_L} and \cref{eq: Pi_b_R}, we obtain the following proposition:
\begin{proposition}
\label{prop:projected_surface_terms}
The projected high-order surface terms in \cref{eq:projected_DGESM_0} can be written as its low-order counterpart plus a corrective term expressed in the highest retained Legendre mode: 
\begin{equation}
\label{eq:projected_ho_surface_terms}
\begin{split}
    &\underline{\underline{\Pi}}\left[\frac{\left(f^{\ast(m)}_L - f_0^{(m)}\right)}{\omega_0^{(m)}}\underline{b}_L^{(m)} + \frac{\left(f^{\ast(m)}_R - f_m^{(m)}\right)}{\omega_m^{(m)}}\underline{b}_R^{(m)}\right] \\
    = &\frac{\left(f^{\ast(m)}_L - f_0^{(m)}\right)}{\omega_0^{(n)}}\underline{b}_L^{(n)} + \frac{\left(f^{\ast(m)}_R - f_m^{(m)}\right)}{\omega_n^{(n)}}\underline{b}_R^{(n)}\\
    &+\frac{n+1}{2}\left[ \left(f^{\ast(m)}_R - f_m^{(m)}\right) - (-1)^n\left(f^{\ast(m)}_L - f_0^{(m)}\right) \right]\underline{v}^{(n)}_n.
\end{split}  
\end{equation}
\end{proposition}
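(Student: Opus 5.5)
By linearity of $\underline{\underline{\Pi}}$, the projected surface term splits into the two scalar multiples $\bigl(f^{\ast(m)}_L - f_0^{(m)}\bigr)\frac{1}{\omega_0^{(m)}}\underline{\underline{\Pi}}\,\underline{b}^{(m)}_L$ and $\bigl(f^{\ast(m)}_R - f_m^{(m)}\bigr)\frac{1}{\omega_m^{(m)}}\underline{\underline{\Pi}}\,\underline{b}^{(m)}_R$. The scalar prefactors pass through unchanged, so the proof reduces to computing the two vectors $\frac{1}{\omega_0^{(m)}}\underline{\underline{\Pi}}\,\underline{b}^{(m)}_L$ and $\frac{1}{\omega_m^{(m)}}\underline{\underline{\Pi}}\,\underline{b}^{(m)}_R$. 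These are already given by \cref{eq: Pi_b_L} and \cref{eq: Pi_b_R} as $\underline{\underline{V}}^{(n)}\bigl[\underline{\underline{\hat{M}}}^{(n)}\bigr]^{-1}{\underline{\underline{V}}^{(n)}}^{\top}\underline{b}^{(n)}_{L}$ and the analogous expression with $\underline{b}^{(n)}_R$. I would take these as established; if needed, they can be rechecked by writing $\underline{\underline{\Pi}}$ as interpolation to $n$-th order nodes of the truncated modal coefficients $\bigl[\underline{\underline{V}}^{(m)}\bigr]^{-1}\underline{b}^{(m)}$. The modal coefficients come from the explicit inverse Vandermonde, whose rows are $\omega_j^{(m)}L_k(\xi_j^{(m)})/\gamma_k$. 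The factor $\omega_0^{(m)}$ cancels, leaving $L_k(-1)/\gamma_k=(-1)^k/\gamma_k$ for $k\le n$. These coefficients are mesh-order independent, which explains why $m$ disappears.

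Next I would apply \cref{lemma:VNVT_VtildeNVT} to each vector. This gives $\bigl[\underline{\underline{M}}^{(n)}\bigr]^{-1}\underline{b}^{(n)}_L+\frac{n+1}{2}\underline{v}^{(n)}_n\bigl({\underline{v}^{(n)}_n}^{\top}\underline{b}^{(n)}_L\bigr)$, and the same with $R$. Since $\underline{\underline{M}}^{(n)}$ is diagonal, $\bigl[\underline{\underline{M}}^{(n)}\bigr]^{-1}\underline{b}^{(n)}_L=\underline{b}^{(n)}_L/\omega_0^{(n)}$ and $\bigl[\underline{\underline{M}}^{(n)}\bigr]^{-1}\underline{b}^{(n)}_R=\underline{b}^{(n)}_R/\omega_n^{(n)}$. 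These produce the first two terms of \cref{eq:projected_ho_surface_terms}.

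For the rank-one part, ${\underline{v}^{(n)}_n}^{\top}\underline{b}^{(n)}_L=-L_n(\xi^{(n)}_0)=-(-1)^n$ and ${\underline{v}^{(n)}_n}^{\top}\underline{b}^{(n)}_R=L_n(\xi^{(n)}_n)=1$, using the endpoint values of Legendre polynomials. Multiplying these by the respective flux jumps and summing yields $\frac{n+1}{2}\bigl[(f^{\ast(m)}_R-f^{(m)}_m)-(-1)^n(f^{\ast(m)}_L-f^{(m)}_0)\bigr]\underline{v}^{(n)}_n$, which completes the identity.

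The only delicate point is the normalization constant $\frac{n+1}{2}$, which depends on the Legendre scaling convention. Specifically, it comes from $1/\gamma_n^{\mathrm{exact}}-1/\gamma_n^{\mathrm{LGL}}$ with $\gamma_n=2/(2n+1)$ exact and $2/n$ under LGL quadrature. This constant is already encapsulated in \cref{lemma:VNVT_VtildeNVT}, so I would simply verify that the sign conventions of $\underline{b}_L=[-1,0,\dots]^{\top}$ match the stated $-(-1)^n$.
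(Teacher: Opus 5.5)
Your proposal is correct and follows the paper's route. The paper's proof is the same three steps: substitute \cref{eq: Pi_b_L}--\cref{eq: Pi_b_R}, apply \cref{lemma:VNVT_VtildeNVT}, and evaluate ${\underline{v}_n^{(n)}}^{\top}\underline{b}^{(n)}_{L}=-(-1)^n$, ${\underline{v}_n^{(n)}}^{\top}\underline{b}^{(n)}_{R}=1$; you also correctly locate the constant as $\frac{n+1}{2}=\frac{2n+1}{2}-\frac{n}{2}$, and your side check of \cref{eq: Pi_b_L} is sound because for $n<m$ the retained rows of $[\underline{\underline{V}}^{(m)}]^{-1}$ involve only the exact norms $\hat{\omega}_k$, $k\le n$.
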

\begin{proof}
    The result follows directly from \cref{lemma:VNVT_VtildeNVT} after substitution into the projected surface terms .
\end{proof}

\begin{remark}
Recall that the original low-order ($n-$order) surface terms are just
\begin{displaymath}
    \frac{\left({f^{\ast(n)}_L}-{f_0^{(n)}}\right)}{{\omega_0^{(n)}}}{\underline{b}_L^{(n)}} + \frac{\left({f^{\ast(n)}_R}-{f_n^{(n)}}\right)}{{\omega_n^{(n)}}}{\underline{b}_R^{(n)}},
\end{displaymath}
The error in the low-order approximation relative to the projected high-order solution arises from two sources. The first is associated with the accuracy of the nodal flux values at the element boundaries and, consequently, with the accuracy of the numerical flux. The second can be interpreted as the quadrature error introduced by the LGL rule when integrating polynomials of degree $2n$. Therefore, the terms in the third row of \cref{eq:projected_ho_surface_terms} can be considered as a correction to the inexactness of the LGL quadrature for high-order polynomials .
\end{remark}

\subsection{Projected volume term}
Following a procedure analogous to that described in \cref{sec:projected_surface_term}, the projected volume term can be written as:
\begin{equation}
\label{eq:projected_volume_term_0}
\underline{\underline{\Pi}}\,\underline{\underline{D}}^{(m)}\underline{f}^{(m)} = \underline{\underline{V}}^{(n)}\overline{\left[ \underline{\underline{V}}^{(m)} \right]^{-1}\underline{\underline{D}}^{(m)}\underline{f}^{(m)}}.
\end{equation}
Introducing the modal derivative matrix:
\begin{equation}
\label{eq:modal_derivative_matrix}
    \underline{\underline{\hat{D}}}^{(m)}:=\left[ \underline{\underline{V}}^{(m)} \right]^{-1}\underline{\underline{D}}^{(m)}\underline{\underline{V}}^{(m)},
\end{equation}
we obtain
\begin{equation}
    \left[ \underline{\underline{V}}^{(m)} \right]^{-1}\underline{\underline{D}}^{(m)}\underline{f}^{(m)} = \underline{\underline{\hat{D}}}^{(m)}\underline{\hat{f}}^{(m)}.
\end{equation}
The modal derivative matrix possesses a sparse alternating-parity structure and satisfies the following identity.
\begin{corollary}
\label{col:D=2N^-1Q}
    The spatial differentiation matrix (modal bases) $\underline{\underline{\hat{D}}}:= \underline{\underline{V}}^{-1}\underline{\underline{D}}\,\underline{\underline{V}}$ can be reformulated as:
\begin{displaymath}
    \underline{\underline{\hat{D}}} = 2\underline{\underline{\hat{M}}}^{-1}\underline{\underline{\hat{Q}}}
\end{displaymath}
where $\underline{\underline{\hat{Q}}} $ is an upper-triangular alternating-parity matrix:
\begin{equation}
    \left[ \underline{\underline{\hat{Q}}}\right]_{ij} = \left\{\begin{array}{cl}
         1&,0<i<j,\;j-i\; \text{is odd};  \\ 
         0&, \text{else.}
    \end{array}
    \right.
\end{equation}
\end{corollary}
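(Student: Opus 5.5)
The plan is to identify $\underline{\underline{\hat{D}}}$ column by column as the Legendre coefficients of derivatives of Legendre polynomials. The first fact needed is that the nodal derivative matrix $\underline{\underline{D}}$ is exact on polynomials of degree at most $p$. Its entries are $[\underline{\underline{D}}]_{ij}=\ell_j'(\xi_i)$, where $\ell_j$ are the Lagrange basis polynomials on the LGL nodes. So for any $g\in\mathbb{P}_p$, the vector $\underline{\underline{D}}\,\underline{g}$ holds the nodal values of $g'$, since $g'$ has degree $p-1$ and is sampled exactly.

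Next, recall that $\underline{\underline{V}}$ maps modal Legendre coefficients to nodal values and $\underline{\underline{V}}^{-1}$ maps back (\cref{sec: AppendixB}). Then $\underline{\underline{\hat{D}}}\,\underline{e}_j=\underline{\underline{V}}^{-1}\underline{\underline{D}}\,\underline{\underline{V}}\,\underline{e}_j$ is the vector of modal coefficients of $L_j'$. Hence $[\underline{\underline{\hat{D}}}]_{ij}$ is the coefficient of $L_i$ in the expansion of $L_j'$. Only differentiation and interpolation enter here, with no quadrature. This is why the \emph{exact} modal mass matrix $\underline{\underline{\hat{M}}}=\mathrm{diag}(2/(2k+1))$ appears, rather than the modified $\underline{\underline{\hat{M}^{\prime}}}$ from \cref{lemma:VNVT_VtildeNVT}.

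The core step is the classical expansion
\begin{displaymath}
L_j'(\xi)=\sum_{\substack{0\le i<j\\ j-i\ \text{odd}}}(2i+1)\,L_i(\xi).
\end{displaymath}
I would prove it from the recurrence $(2k+1)L_k=L_{k+1}'-L_{k-1}'$ (with $L_{-1}'\equiv 0$), summing telescopically over $k=j-1,j-3,\dots$ down to $0$ or $1$. The recurrence itself follows from Bonnet's recurrence together with $(1-\xi^2)L_k'=k(L_{k-1}-\xi L_k)$, or can be taken as standard.

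An alternative route computes $\langle L_j',L_i\rangle$ by integration by parts, giving $L_jL_i\big|_{-1}^{1}-\langle L_j,L_i'\rangle$. For $i<j$ the integral term vanishes by orthogonality, so only the boundary term $1-(-1)^{i+j}$ remains. This equals $2$ when $j-i$ is odd and $0$ otherwise. For $i\ge j$, $L_j'$ has degree less than $i$ and the inner product is $0$. Dividing by $\|L_i\|^2=2/(2i+1)$ gives $[\underline{\underline{\hat{D}}}]_{ij}=(2i+1)$ exactly when $i<j$ and $j-i$ is odd, and $0$ otherwise. I would favour this route, since it mirrors the integration-by-parts argument of \cref{eq: partial_xi_phi}.

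Finally, I would factor $(2i+1)=[2\underline{\underline{\hat{M}}}^{-1}]_{ii}$ out of each row, leaving the $0/1$ upper-triangular alternating-parity pattern $\underline{\underline{\hat{Q}}}$. The inner-product form makes this factorisation natural: $\underline{\underline{\hat{M}}}\,\underline{\underline{\hat{D}}}$ has entries $\langle L_i,L_j'\rangle=2[\underline{\underline{\hat{Q}}}]_{ij}$.

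The only delicate point is the index convention. The row $i=0$ must be included, since $L_1'=L_0$ gives $[\underline{\underline{\hat{D}}}]_{01}=1$. So the condition in the statement should be read as $0\le i<j$, and I would note this explicitly. Everything else is routine.
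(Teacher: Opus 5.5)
Your proof is correct. The paper gives no proof of this corollary: it is stated as a known identity, implicitly relying on the classical expansion $L_j'=\sum_{i<j,\ j-i\ \mathrm{odd}}(2i+1)L_i$. Your argument therefore supplies the missing justification rather than reproducing an existing one.

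Two features of your route connect directly to how the paper uses the result.

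\begin{itemize}
\item Your remark that only exact interpolation and differentiation enter $\underline{\underline{V}}^{-1}\underline{\underline{D}}\,\underline{\underline{V}}$, with no quadrature, shows that $\underline{\underline{\hat{D}}}$ is independent of the node set. The paper needs this tacitly in \cref{lemma:high_low_Df_modal}. There the leading $(n+1)\times(n+1)$ block of $\underline{\underline{\hat{D}}}^{(m)}$, built on $m$th-order LGL nodes, is identified with $\underline{\underline{\hat{D}}}^{(n)}$, built on $n$th-order nodes.
\item Your integration-by-parts computation, $\langle L_i,L_j'\rangle=L_iL_j\big|_{-1}^{1}$ for $i<j$, exhibits $2\underline{\underline{\hat{Q}}}$ as the strictly upper-triangular part of $\underline{1}\,\underline{1}^{\top}-\underline{\check{1}}\,\underline{\check{1}}^{\top}$, that is, as products of boundary values at $\xi=\pm1$. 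This is exactly the outer-product form \cref{eq:2R} that the paper later uses to turn the coupling block $\underline{\underline{\hat{R}}}$ into surface terms in $\tilde{f}_0$ and $\tilde{f}_n$. The telescoping-recurrence route gives the same entries but does not expose this boundary structure as directly.
\end{itemize}

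Your index correction is right, and the paper's own \cref{eq:Rhat} confirms it: row $i=0$ of $\underline{\underline{\hat{R}}}$ is nonzero whenever $n+j$ is even. With the $0$-based indexing used for $\underline{\underline{\hat{M}}}$, the condition must read $0\le i<j$.

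One small point of wording. $\underline{\underline{D}}$ is exact on $\mathbb{P}_p$ because $g=\sum_j g(\xi_j)\ell_j$ for $g\in\mathbb{P}_p$. Differentiating gives $g'(\xi_i)=\sum_j \ell_j'(\xi_i)\,g(\xi_j)$, so the degree of $g'$ plays no role in the argument.
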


Partitioning the modal coefficients into retained and truncated modes yields the following result.
\begin{lemma}
\label{lemma:high_low_Df_modal}
Let
\begin{displaymath}
    \underline{\hat{f}}^{(m)}=\left[ \underline{\hat{f}}^{(n)\top}, \underline{\tilde{\hat{f}}}^{\top}\right],
\end{displaymath}
then the filtered modal derivative satisfies
\begin{displaymath}    \overline{\underline{\underline{\hat{D}}}^{(m)}\underline{\hat{f}}^{(m)}} = \underline{\underline{\hat{D}}}^{(n)}\underline{\hat{f}}^{(n)} + 2\left[\underline{\underline{\hat{M}}}^{(n)}\right]^{-1}\underline{\underline{\hat{R}}}\,\underline{\tilde{\hat{f}}},
\end{displaymath}
where $\underline{\underline{\hat{R}}}$ couples retained and truncated modes.
\end{lemma}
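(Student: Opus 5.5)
Since the statement only concerns modal coefficients, I will work entirely in modal space and use \cref{col:D=2N^-1Q} at order $m$:
\begin{displaymath}
\underline{\underline{\hat{D}}}^{(m)} = 2\left[\underline{\underline{\hat{M}}}^{(m)}\right]^{-1}\underline{\underline{\hat{Q}}}^{(m)}.
\end{displaymath}
A point to settle first is which modal mass matrix appears here. Because $\underline{\underline{\hat{D}}}$ is the exact derivative of Legendre expansions and is independent of quadrature, the matrix $\underline{\underline{\hat{M}}}^{(m)}$ must be the exact one, $\mathrm{diag}(2/(2k+1))$. Hence it is diagonal, and its leading $(n+1)\times(n+1)$ block coincides with the exact $\underline{\underline{\hat{M}}}^{(n)}$.

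I will partition $\underline{\underline{\hat{Q}}}^{(m)}$ conformally with $\underline{\hat{f}}^{(m)}=[\underline{\hat{f}}^{(n)\top},\underline{\tilde{\hat{f}}}^\top]^\top$ into blocks
\begin{displaymath}
\underline{\underline{\hat{Q}}}^{(m)}=\begin{bmatrix}\underline{\underline{\hat{Q}}}_{11} & \underline{\underline{\hat{Q}}}_{12}\\ \underline{\underline{0}} & \underline{\underline{\hat{Q}}}_{22}\end{bmatrix}.
\end{displaymath}
The lower-left block vanishes because $\underline{\underline{\hat{Q}}}$ is upper triangular. Filtering, meaning truncation to the first $n+1$ rows, then gives
\begin{displaymath}
\overline{\underline{\underline{\hat{D}}}^{(m)}\underline{\hat{f}}^{(m)}}=2\left[\underline{\underline{\hat{M}}}^{(n)}\right]^{-1}\left(\underline{\underline{\hat{Q}}}_{11}\underline{\hat{f}}^{(n)}+\underline{\underline{\hat{Q}}}_{12}\underline{\tilde{\hat{f}}}\right).
\end{displaymath}
This step uses the diagonality of $\underline{\underline{\hat{M}}}^{(m)}$, so that its inverse does not mix the retained and truncated rows.

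Next I will show that $\underline{\underline{\hat{Q}}}_{11}=\underline{\underline{\hat{Q}}}^{(n)}$. The entries of $\underline{\underline{\hat{Q}}}$ depend only on the indices $i,j$ through the parity rule in \cref{col:D=2N^-1Q}, not on the truncation order. Applying \cref{col:D=2N^-1Q} again at order $n$ then identifies the first term as $\underline{\underline{\hat{D}}}^{(n)}\underline{\hat{f}}^{(n)}$. Finally I will set $\underline{\underline{\hat{R}}}:=\underline{\underline{\hat{Q}}}_{12}\in\mathbb{R}^{(n+1)\times r}$. Its entries are $1$ when $0<i\le n<j\le m$ with $j-i$ odd, and $0$ otherwise. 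This block is exactly the coupling between retained and truncated modes.

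The main obstacle is making sure that the mass matrices in \cref{col:D=2N^-1Q} and in the lemma are the exact modal ones, not the LGL-modified $\underline{\underline{\hat{M}^{\prime}}}$ from \cref{lemma:VNVT_VtildeNVT}. With the modified matrix, the last diagonal entry at order $n$ would differ from the corresponding entry of the order-$m$ matrix, and the block restriction would fail. I would check this directly from the classical identity
\begin{displaymath}
L_j'=\sum_{i<j,\;j-i\text{ odd}}(2i+1)L_i,
\end{displaymath}
which gives $[\underline{\underline{\hat{D}}}]_{ij}=2i+1=2\cdot\frac{2i+1}{2}$. This confirms that the factor is the exact inverse mass $\frac{2i+1}{2}$. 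The same check also shows whether the paper's condition $0<i$ should read $0\le i$; I expect the row $i=0$ to need $L_0$ included. I will keep whatever convention \cref{col:D=2N^-1Q} fixes, since the block argument does not depend on it.
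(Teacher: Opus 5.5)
Your proposal is correct and follows the paper's route: the paper obtains the lemma by reading off the block structure of $\underline{\underline{\hat{D}}}^{(m)}=2\left[\underline{\underline{\hat{M}}}^{(m)}\right]^{-1}\underline{\underline{\hat{Q}}}^{(m)}$, with $\underline{\underline{\hat{Q}}}^{(m)}$ partitioned as in \cref{eq:splitQ} and the exact (not LGL-modified) modal mass matrix, and its $\underline{\underline{\hat{R}}}$ is exactly your upper-right block $\underline{\underline{\hat{Q}}}_{12}$. Your suspicion about the row index is also confirmed by the appendix, which defines $\underline{\underline{\hat{R}}}$ for $0\le i\le n$ with $[\underline{\underline{\hat{R}}}]_{ij}=1$ exactly when $n+i+j$ is even (local column index $j=0,\dots,r-1$); so the condition $0<i$ in \cref{col:D=2N^-1Q} should read $0\le i$, as $L_1'=L_0$ shows.
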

\begin{proof}
    The result is directly derived from the block structure of $\underline{\underline{\hat{D}}}^{(m)}$. Detailed expressions are given in \cref{sec: AppendixC1}.
\end{proof}
\Cref{lemma:high_low_Df_modal} immediately produces the following decomposition of the projected volume term.

\begin{proposition}
\label{prop:projected_volume_term}
The projected high-order volume contribution can be decomposed into the low-order volume term plus boundary and quadrature corrections induced by the truncated modes:
\begin{equation}
\label{eq:projected_volume_term_final}
    \begin{split}        \underline{\underline{\Pi}}\,\underline{\underline{D}}^{(m)}\underline{f}^{(m)} = & \underline{\underline{D}}^{(n)}\overline{\underline{f}^{(m)}}+\frac{\tilde{f}_0}{\omega^{(n)}_0}\underline{b}_L^{(n)}+\frac{\tilde{f}_n} {\omega^{(n)}_n}\underline{b}_R^{(n)} \\
    &+ \frac{n+1}{2}\left[ \tilde{f}_n-(-1)^n\tilde{f}_0 \right]\underline{v}_n^{n}.
    \end{split}
\end{equation}
\end{proposition}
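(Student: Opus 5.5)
The plan is to start from \cref{eq:projected_volume_term_0}, which writes $\underline{\underline{\Pi}}\,\underline{\underline{D}}^{(m)}\underline{f}^{(m)}=\underline{\underline{V}}^{(n)}\overline{\underline{\underline{\hat{D}}}^{(m)}\underline{\hat{f}}^{(m)}}$, and then insert \cref{lemma:high_low_Df_modal}. This gives two contributions,
\begin{displaymath}
\underline{\underline{V}}^{(n)}\underline{\underline{\hat{D}}}^{(n)}\underline{\hat{f}}^{(n)} \quad\text{and}\quad 2\,\underline{\underline{V}}^{(n)}\left[\underline{\underline{\hat{M}}}^{(n)}\right]^{-1}\underline{\underline{\hat{R}}}\,\underline{\tilde{\hat{f}}}.
\end{displaymath}
For the first one I use the definition \cref{eq:modal_derivative_matrix} at order $n$: $\underline{\underline{V}}^{(n)}\underline{\underline{\hat{D}}}^{(n)}=\underline{\underline{D}}^{(n)}\underline{\underline{V}}^{(n)}$. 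Moreover, $\underline{\underline{V}}^{(n)}\underline{\hat{f}}^{(n)}$ is the nodal vector of the filtered flux $\overline{f^{(m)}}$ on the $n^{th}$-order LGL nodes. So this term is exactly $\underline{\underline{D}}^{(n)}\overline{\underline{f}^{(m)}}$, which is the first term of \cref{eq:projected_volume_term_final}.

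The core step is to identify $2\underline{\underline{\hat{R}}}\,\underline{\tilde{\hat{f}}}$ as a boundary-value vector. By \cref{col:D=2N^-1Q}, $\underline{\underline{\hat{R}}}$ is the off-diagonal block of $\underline{\underline{\hat{Q}}}^{(m)}$ with rows $i=0,\dots,n$ and columns $j=n+1,\dots,m$. Its entries equal $1$ exactly when $j-i$ is odd. Hence
\begin{displaymath}
(\underline{\underline{\hat{R}}}\,\underline{\tilde{\hat{f}}})_i=\sum_{j>n,\; j-i \text{ odd}}\hat{f}_j .
\end{displaymath}
Next I use the boundary values of the Legendre polynomials, $L_j(1)=1$ and $L_j(-1)=(-1)^j$. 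They give $\tilde{f}(1)=\sum_{j>n}\hat{f}_j$ and $\tilde{f}(-1)=\sum_{j>n}(-1)^j\hat{f}_j$. Therefore $\tilde{f}(1)-(-1)^i\tilde{f}(-1)=2\sum_{j>n,\,j-i\text{ odd}}\hat{f}_j$. Writing $\tilde{f}_0=\tilde{f}(\xi^{(n)}_0)=\tilde{f}(-1)$ and $\tilde{f}_n=\tilde{f}(\xi^{(n)}_n)=\tilde{f}(1)$, this reads
\begin{displaymath}
2\underline{\underline{\hat{R}}}\,\underline{\tilde{\hat{f}}}=\tilde{f}_n\,[L_i(1)]_i-\tilde{f}_0\,[L_i(-1)]_i={\underline{\underline{V}}^{(n)}}^{\top}\left(\tilde{f}_0\,\underline{b}_L^{(n)}+\tilde{f}_n\,\underline{b}_R^{(n)}\right),
\end{displaymath}
since ${\underline{\underline{V}}^{(n)}}^{\top}\underline{b}_R^{(n)}$ is the last row of $\underline{\underline{V}}^{(n)}$ and ${\underline{\underline{V}}^{(n)}}^{\top}\underline{b}_L^{(n)}$ is minus its first row. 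This is the same structure as in \cref{eq: Pi_b_L,eq: Pi_b_R}.

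Finally I apply \cref{lemma:VNVT_VtildeNVT} to
\begin{displaymath}
\underline{\underline{V}}^{(n)}\left[\underline{\underline{\hat{M}}}^{(n)}\right]^{-1}{\underline{\underline{V}}^{(n)}}^{\top}\left(\tilde{f}_0\underline{b}_L^{(n)}+\tilde{f}_n\underline{b}_R^{(n)}\right).
\end{displaymath}
The $\left[\underline{\underline{M}}^{(n)}\right]^{-1}$ part yields $\frac{\tilde{f}_0}{\omega^{(n)}_0}\underline{b}_L^{(n)}+\frac{\tilde{f}_n}{\omega^{(n)}_n}\underline{b}_R^{(n)}$. The rank-one part needs ${\underline{v}_n^{(n)}}^{\top}\underline{b}_L^{(n)}=-L_n(-1)=-(-1)^n$ and ${\underline{v}_n^{(n)}}^{\top}\underline{b}_R^{(n)}=L_n(1)=1$, and it yields $\frac{n+1}{2}\left[\tilde{f}_n-(-1)^n\tilde{f}_0\right]\underline{v}_n^{(n)}$. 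The main obstacle is bookkeeping in the middle step. First, I must check the exact indexing convention of $\underline{\underline{\hat{Q}}}$ (the row $i=0$ must be included in $\underline{\underline{\hat{R}}}$). Second, I must check the normalization: $\underline{\underline{\hat{M}}}$ has to be the exact modal mass matrix $\mathrm{diag}(2/(2i+1))$, consistent with the factor $2$ in \cref{col:D=2N^-1Q}. The explicit block form in \cref{sec: AppendixC1} would be the thing to verify against.
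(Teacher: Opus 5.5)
Your proposal is correct and follows essentially the same route as the paper: invoke \cref{lemma:high_low_Df_modal}, identify $2\underline{\underline{\hat{R}}}\,\underline{\tilde{\hat{f}}}={\underline{\underline{V}}^{(n)}}^{\top}\bigl(\tilde{f}_0\underline{b}_L^{(n)}+\tilde{f}_n\underline{b}_R^{(n)}\bigr)$, and finish with \cref{lemma:VNVT_VtildeNVT}. Your derivation of that middle identity from the endpoint values $L_j(\pm1)$ is just the entrywise form of the paper's rank-two splitting $2\underline{\underline{\hat{R}}}=\underline{1}^{(n)}{\underline{1}^{(r-1)}}^{\top}-\underline{\check{1}}^{(n)}\underline{\tilde{\check{1}}}^{\top}$ in \cref{sec: AppendixC2}, and both of your bookkeeping concerns resolve as you expect: row $i=0$ does belong to $\underline{\underline{\hat{R}}}$ (the condition $0<i<j$ in \cref{col:D=2N^-1Q} should read $0\le i<j$, as \cref{eq:Rhat} confirms), and $\underline{\underline{\hat{M}}}=\mathrm{diag}(2/(2i+1))$ is exactly the normalization that makes $2\underline{\underline{\hat{M}}}^{-1}\underline{\underline{\hat{Q}}}$ the exact Legendre modal derivative.
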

\begin{proof}
Using \cref{lemma:high_low_Df_modal}, the projected volume term reads
\begin{equation}    \underline{\underline{\Pi}}\,\underline{\underline{D}}^{(m)}\underline{f}^{(m)} = \underline{\underline{D}}^{(n)}\overline{\underline{f}^{(m)}} + 2\underline{\underline{V}}^{(n)}\left[\underline{\underline{\hat{M}}}^{(n)}\right]^{-1}\underline{\underline{\hat{R}}}\,\underline{\tilde{\hat{f}}}.
\end{equation}
and using the structure of $\underline{\underline{\hat{R}}}$ (see \cref{sec: AppendixC2}),
\begin{equation}  
\label{eq: 2Rf}
2\underline{\underline{\hat{R}}}\,\underline{\tilde{\hat{f}}} = {\underline{\underline{V}}^{(n)}}^{\top}\left[ \tilde{f}_0{\underline{b}_L^{(n)}} + \tilde{f}_n{\underline{b}_R^{(n)}} \right].
\end{equation}
one can apply \cref{lemma:VNVT_VtildeNVT} to immediately obtain
\begin{equation}
\label{eq:correction_of_derivative_terms}
    \begin{split}        2\underline{\underline{V}}^{(n)}\left[\underline{\underline{\hat{M}}}^{(n)}\right]^{-1}\underline{\underline{\hat{R}}}\,\underline{\tilde{\hat{f}}} &= \frac{\tilde{f}_0}{{\omega_0^{(n)}}}{\underline{b}_L^{(n)}} + \frac{\tilde{f}_n}{{\omega_n^{(n)}}}{\underline{b}_R^{(n)}} + \frac{n+1}{2}\left[ \tilde{f}_n - (-1)^n\tilde{f}_0 \right]{\underline{v}_n^{(n)}},
    \end{split}
\end{equation}
which completes the proof.
\end{proof}

\begin{remark}
 It should be noted that $\tilde{f}_0$ and $\tilde{f}_n$ are the contribution of nodal flux values to the boundaries of Legendre polynomials higher than $n$ and less than or equal to $m$
    \begin{equation}
        \tilde{f}_0 = \sum_{i=n+1}^m\hat{f}_iL_i({\xi_0^{(n)}}),\; \tilde{f}_n = \sum_{i=n+1}^m\hat{f}_iL_i({\xi_n^{(n)}}).
    \end{equation}
\end{remark}

\begin{remark}
    Note that if the nodal flux evaluation function, $f$, is nonlinear, then the flux vector computed from the projected solution and the projected high-order flux are different:
    \begin{equation}
        \underline{f}^{(n)} :=f\left(\underline{\underline{\Pi}}\,\underline{u}^{(m)}\right)\neq \underline{\underline{\Pi}}\,f\left( \underline{u}^{(m)} \right):=\overline{\underline{f}^{(m)}}.
    \end{equation}
    However, for linear flux, these two vectors coincide, and the expression in \cref{eq:projected_volume_term_final} can be further simplified. The property is highly attractive when computing viscous fluxes because of their linearity, which means that the projected high-order residual can be computed directly from low-order solutions.
\end{remark}

\subsection{Simplified formulation for the projected high-order time derivative}

By combining the projected surface and volume terms, the projected high-order time derivative is obtained:
\begin{proposition}
\label{prop:projected_ho_time_derivative}
    In nodal DGSEM-LGL, the low-order components of a high-order solution can be recovered exactly on low-order nodes if and only if the high-order flux vectors, i.e., $\underline{f}^{(m)}:=f(\underline{u}^{(m)})$, are given. More explicitly, the projected high-order time derivative is governed by the following equation
    \begin{equation}
    \label{eq:projected_ho_time_derivative}
    \begin{split}
    \mathcal{J}_i\overline{\underline{\dot{u}}^{(m)}} &+\frac{\left({f^{\ast}_L}^{(m)}-\overline{f_0^{(m)}}\right)}{{\omega_0^{(n)}}}{\underline{b}_L^{(n)}} + \frac{\left({f^{\ast}_R}^{(m)}-\overline{f_n^{(m)}}\right)}{{\omega_n^{(n)}}}{\underline{b}_R^{(n)}} +\underline{\underline{D}}^{(n)}\overline{\underline{f}^{(m)}}\\
    &+ \frac{n+1}{2}\left[\left({f^{\ast}_R}^{(m)}-\overline{f_n^{(m)}}\right)-(-1)^{n}\left({f^{\ast}_L}^{(m)}-\overline{f_0^{(m)}}\right)\right] {\underline{v}_n^{(n)}}=0.
    \end{split}
    \end{equation}
\end{proposition}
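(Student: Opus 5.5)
The plan is to apply the projector $\underline{\underline{\Pi}}$ to the whole high-order semi-discrete DGSEM-LGL equation, as already done in \cref{eq:projected_DGESM_0}, and then substitute the two decompositions obtained in \cref{prop:projected_surface_terms} and \cref{prop:projected_volume_term}. By linearity of $\underline{\underline{\Pi}}$, the projected time-derivative term is simply $\mathcal{J}_i\overline{\underline{\dot{u}}^{(m)}}$, so the whole content of the statement lies in the algebraic recombination of the surface and volume contributions.

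The first step is to replace the projected surface term by the right-hand side of \cref{eq:projected_ho_surface_terms}. This gives boundary-vector contributions $\bigl(f^{\ast(m)}_L - f_0^{(m)}\bigr)/\omega_0^{(n)}\,\underline{b}_L^{(n)}$ and the analogous term on the right, plus a correction along $\underline{v}_n^{(n)}$. Here $f_0^{(m)}$ and $f_m^{(m)}$ are the endpoint values of the full high-order flux. These coincide with the endpoint values of the modal expansion $\sum_{i=0}^m\hat f_iL_i$ at $\xi=\pm1$, since the first and last LGL nodes of every order are $\mp1$.

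The second step is to substitute \cref{eq:projected_volume_term_final} for the volume term, yielding $\underline{\underline{D}}^{(n)}\overline{\underline{f}^{(m)}}$ plus $\tilde f_0/\omega_0^{(n)}\,\underline{b}_L^{(n)}+\tilde f_n/\omega_n^{(n)}\,\underline{b}_R^{(n)}$ plus a $\underline{v}_n^{(n)}$ correction. The key identity is that at the endpoints $f_0^{(m)} = \overline{f_0^{(m)}}+\tilde f_0$ and $f_m^{(m)}=\overline{f_n^{(m)}}+\tilde f_n$. This holds because $f^{(m)}=\overline{f^{(m)}}+\tilde f$ pointwise, with $\overline{f^{(m)}}$ evaluated at $\xi_0^{(n)}=-1$ and $\xi_n^{(n)}=1$, and $\tilde f_0,\tilde f_n$ defined in the Remark after \cref{prop:projected_volume_term}. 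Adding the two contributions, the $\underline{b}_L^{(n)}$ coefficients combine as $\bigl(f^{\ast(m)}_L - f_0^{(m)} + \tilde f_0\bigr)/\omega_0^{(n)} = \bigl(f^{\ast(m)}_L-\overline{f_0^{(m)}}\bigr)/\omega_0^{(n)}$, and likewise on the right. The $\underline{v}_n^{(n)}$ corrections combine linearly, with the same factor $\frac{n+1}{2}$ and the same sign pattern $(\cdot)_R-(-1)^n(\cdot)_L$, into the last term of \cref{eq:projected_ho_time_derivative}. This mirrors the continuous identity \cref{eq: exact_low_order_evolution}.

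The main obstacle is bookkeeping of signs and endpoint conventions rather than analysis. One must check that the sign with which $\tilde f_0$ enters alongside $\underline{b}_L^{(n)}=[-1,0,\dots]^\top$ in \cref{prop:projected_volume_term} is indeed opposite to $-f_0^{(m)}$ in the surface term. I would verify this through \cref{eq: 2Rf} with a small case, e.g.\ $n=1,m=2$, computing $\underline{\underline{\hat R}}$ explicitly.

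For the ``if and only if'' claim, the ``if'' direction is the derived identity itself: every quantity in \cref{eq:projected_ho_time_derivative} is computable on the $n^{th}$-order nodes from $\underline{f}^{(m)}$ through its projection and endpoint values. For the converse, I would argue that the term $\underline{\underline{D}}^{(n)}\overline{\underline{f}^{(m)}}$ and the endpoint traces depend on the projection of the high-order flux and not on $f(\underline{\underline{\Pi}}\,\underline{u}^{(m)})$, which differs for nonlinear $f$ by the preceding Remark. Hence exact recovery requires knowledge of $\underline{f}^{(m)}$, at least through $\overline{\underline{f}^{(m)}}$ and its traces.
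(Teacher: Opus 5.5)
Your proposal is correct and is the paper's own argument: substitute \cref{prop:projected_surface_terms} and \cref{prop:projected_volume_term} into \cref{eq:projected_DGESM_0}, then use the endpoint splittings $f_0^{(m)}=\overline{f_0^{(m)}}+\tilde{f}_0$ and $f_m^{(m)}=\overline{f_n^{(m)}}+\tilde{f}_n$; your index $m$ on the right trace is more careful than the paper's, and the signs are already fixed by \cref{prop:projected_volume_term}, so the small-case check you planned is unnecessary. The paper gives no separate argument for the ``only if'' direction, so your heuristic converse goes as far as the paper does. That heuristic rightly notes that only $\overline{\underline{f}^{(m)}}$, its traces and the high-order numerical fluxes are actually needed.
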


\begin{proof}
 Taking into account \cref{prop:projected_surface_terms} and \cref{prop:projected_volume_term} and using the properties ${f_0^{(m)}} = \overline{f_0^{(m)}}+\tilde{f}_0$ and ${f_n^{(m)}} =\overline{ {f_n^{(m)}}}+\tilde{f}_n$ complete the proof.
\end{proof}

\begin{remark}
 It should be noted that the formulation of the projected high-order time derivative in \cref{eq:projected_ho_time_derivative} mimics the continuous formulation \cref{eq: exact_low_order_evolution} term by term, except for a correction term to address the inaccuracy of the LGL quadrature.
\end{remark}

\begin{remark}
    The two-dimensional case of \cref{eq:projected_ho_time_derivative} can be easily derived using the tensor product of the one-dimensional formulation. The detailed expression is given in \cref{sec: Appendix_2d_formulation}.
\end{remark}

By subtracting the low-order time derivative, $\mathcal{J}_i\underline{\dot{u}}^{(n)}$, from \Cref{eq:projected_ho_time_derivative}, the corrective forcing term can be explicitly defined
\begin{equation}
    \mathcal{J}_i\overline{\underline{\dot{u}}^{(m)}} = \mathcal{J}_i\underline{\dot{u}}^{(n)} + \underline{c}^{(n,m)},
\end{equation}
where $\underline{c}^{(n,m)}\in\mathbb{R}^{n+1}$ is the correction in the following form:
\begin{equation}
\label{eq:correction}
\begin{split}
    \underline{c}^{(n,m)} = &- \frac{\left({f^{\ast(m)}_L}-{f^{\ast(n)}_L}-\overline{f_0^{(m)}}+f^{(n)}_0\right)}{{\omega_0^{(n)}}}{\underline{b}_L^{(n)}} - \frac{\left({f^{\ast(m)}_R}-{f^{\ast(n)}_R}-\overline{f_n^{(m)}}+f^{(n)}_n\right)}{{\omega_n^{(n)}}}{\underline{b}_R^{(n)}} \\
    &-  \underline{\underline{D}}^{(n)}\left( \overline{\underline{f}^{(m)}}-\underline{f}^{(n)} \right)\\
    &- \frac{n+1}{2}\left[\left({f^{\ast(m)}_R}-\overline{f_n^{(m)}}\right)-(-1)^{n}\left({f^{\ast(m)}_L}-\overline{f_0^{(m)}}\right)\right] {\underline{v}_n^{(n)}}.
\end{split}
\end{equation}

\subsection{Viscous flux}
When viscous effects are considered, the viscous flux, $g$, needs to be added to the inviscid flux. Let us explain the procedure by taking the viscous flux of Burgers' equation, as an example 
\begin{equation}
    g = \frac{\nu}{\mathcal{J}_i} u_{\xi},
\end{equation}
and follow the BR1 scheme \cite{bassi_high-order_1997}
\begin{equation}
    \underline{u_{\xi}}^{(n)} = \frac{\left(u^{\ast(n)}_L - u_0\right)}{\omega_0^{(n)}}\underline{b}_L^{(n)} + \frac{\left(u^{\ast(n)}_R - u_n^{(n)}\right)}{\omega_n^{(n)}}\underline{b}_R^{(n)}+\underline{\underline{D}}^{(n)}\,\underline{u}^{(n)},
\end{equation}
where $u^{\ast(n)}_L$ and $u^{\ast(n)}_R$ are the numerical fluxes using the central scheme. Analogously to the projected high-order inviscid flux term, the corresponding viscous flux term can be written as
\begin{equation}
\label{eq:projected_D^m_g^m}
\begin{split}
    \underline{\underline{\Pi}}\,\underline{\underline{D}}^{(m)}\underline{g}^{(m)}
    = &\frac{\left(g^{\ast(m)}_{L} - \overline{g_{0}^{(m)}}\right)}{\omega_0^{(n)}}\underline{b}_L^{(n)} + \frac{\left(g^{\ast(m)}_R - \overline{g_n^{(m)}}\right)}{\omega_n^{(n)}}\underline{b}_R^{(n)}+\underline{\underline{D}}^{(n)}\overline{\underline{g}^{(m)}}\\
    &+\frac{n+1}{2}\left[\left({g^{\ast(m)}_R}-\overline{g_n^{(m)}}\right)-(-1)^{n}\left({g^{\ast(m)}_L}-\overline{g_0^{(m)}}\right)\right] {\underline{v}_n^{(n)}}
\end{split}
\end{equation}
where $g^{\ast(m)}_L$ and $g^{\ast(m)}_R$ are the numerical fluxes computed from the high-order viscous flux $\underline{g}^{(m)}$, which are calculated using a central scheme
\begin{equation}
\label{eq:central_g}
g^{\ast(m)}_L =\frac{1}{2}\left(g_m^{(m)-}+g_0^{(m)}\right),\,g^{\ast(m)}_R =\frac{1}{2}\left(g_m^{(m)}+g_0^{(m)+}\right).
\end{equation}
For the projected high-order viscous flux, $\overline{\underline{g}^{(m)}}$, it can be computed as follows:
\begin{multline}
\label{eq:projected_g^m}
    \overline{\underline{g}^{(m)}} = \frac{\nu}{\mathcal{J}_i}\Biggl\{ \frac{\left(u^{\ast(m)}_L - u_0^{(n)}\right)}{\omega_0^{(n)}}\underline{b}_L + \frac{\left(u^{\ast(m)}_R - u_n^{(n)}\right)}{\omega_n^{(n)}}\underline{b}_R+\underline{\underline{D}}^{(n)}\,\underline{u}^{(n)} \\
    + \frac{n+1}{2}\left[\left({u^{\ast(m)}_R}-{u_n^{(n)}}\right)-(-1)^{n}\left({u^{\ast(m)}_L}-{u_0^{(n)}}\right)\right] {\underline{v}_n^{(n)}}\Biggl\}.
\end{multline}

\section{Compact reconstruction of high-order solutions}
\label{sec:reconstruction}
Assuming that the target element of reconstruction is the mesh element $\Omega_i$, we choose a compact reconstruction stencil that contains only the direct neighboring elements
\begin{equation}
    \mathcal{S}_i = \{ \Omega_{i-1},\Omega_{i},\Omega_{i+1}  \},
\end{equation}
where three elements are involved. We follow the ideas in \cite{dumbser_arbitrary_2007,dumbser_unified_2008}, to define a reconstruction based on $L^2$-projection, and consequently a generalized conservation property must be satisfied for all degrees of freedom up to degree $n$. In the physical coordinate system, we have the following condition
\begin{multline}
\label{eq: conditions_physical_space}
    \int_{\Omega_l}\left[w^{(m)}\right]_i\left(\xi(\Omega_i,x)\right)L_kdx = \int_{\Omega_l}\left[u^{(n)}\right]_l\left(\xi(\Omega_i,x)\right)L_kdx,\\
    \forall\Omega_l\in\mathcal{S}_{i},\; k\in [ 0,1,\dots,n],
\end{multline}
where $[u]_l$ denotes the solution in $\Omega_l$ and $w^{(m)}$ denotes the reconstructed high-order solution. As before, it is more convenient to implement the reconstruction in the reference element by applying the mapping in \cref{eq: geo_mapping} to all elements in the stencil, and the transformed elements are indicated by $\widetilde{\Omega}_l$. It should be noted that for all elements $\Omega_l\in S_i$ the mapping with respect to $\Omega_i$ is applied. In particular, for the target element $\Omega_i$, its transformed element is just the reference element, e.g., $\widetilde{\Omega}_i=E$. The stencil transformed following this procedure  is denoted as $\widetilde{\mathcal{S}}_i =\{ \widetilde{\Omega}_{i-1},\widetilde{\Omega}_i,\widetilde{\Omega}_{i+1}  \}$.

After the transformation, and considering that the solution can be expanded using Legendre polynomials, \cref{eq: conditions_physical_space} becomes
\begin{multline}
    \label{eq: conditions_xi}
    \mathcal{J}_i\left[ \hat{w}^{(m)}_p\right]_i\int_{\widetilde{\Omega}_l}L_kL_p^{S}d\xi=\mathcal{J}_i\left[ \hat{u}^{(n)}_q\right]_l\int_{\widetilde{\Omega}_l}L_kL_qd\xi,\;0,1,\dots,n],\forall\widetilde{\Omega}_l\in\widetilde{\mathcal{S}}_{i},\\ \forall k \in [0,1,\dots,n],
\end{multline}
and the Jacobian appearing on both sides can be canceled out. To compute the integrals of the above equations, we transform $\widetilde{\Omega}_l$ into the reference element $E$ by another mapping of the $\xi$-coordinates to the $\tilde{\xi}$-coordinates. This second mapping and its inverse are indicated by $\tilde{\xi}=\tilde{\xi}\left(\widetilde{\Omega}_l,\xi \right)$ and $\xi=\xi\left(\widetilde{\Omega}_l,\tilde{\xi} \right)$, respectively. Thus, \cref{eq: conditions_xi} becomes after the second transformation
\begin{multline}
     \left[ \hat{w}_p^{(m)}\right]_i\int_EL_k\left(\xi(\widetilde{\Omega}_l,\tilde{\xi})\right)L_p^{S}\left(\xi(\widetilde{\Omega}_l,\tilde{\xi})\right)d\tilde{\xi} \\= \left[ \hat{u}_q^{(n)}\right]_l\int_EL_k\left(\xi(\widetilde{\Omega}_l,\tilde{\xi})\right)L_q\left(\xi(\widetilde{\Omega}_l,\tilde{\xi})\right)d\tilde{\xi},
     \forall\widetilde{\Omega}_l\in\widetilde{\mathcal{S}}_{i},\;\forall k \in [0,1,\dots,n],
\end{multline}
as the Jacobian of the second transform also cancels out. Note that in the target element $\Omega_i$, $L_p^S = L_p$, and the above equation is reduced to:
\begin{equation}
\label{eq: conditions_n}
    \left[\hat{w}_k^{(m)}\right]_i = \left[\hat{u}_k^{(n)}\right]_i,\; \forall k \in [0,1,\dots,n].
\end{equation}
This last equation states that the first $n+1$ modal coefficients remain unchanged and guaranty generalized conservation up to the $n^{th}$-order. Taking into account orthogonality $\int_EL_kL_qd\tilde{\xi} = \delta_{kq}\hat{\omega}_k$, for all other elements of the stencil, we have
\begin{equation}
\label{eq: w_hat = u_hat}
\left[\hat{w}_p^{(m)}\right]_i\int_EL_k\left(\xi(\widetilde{\Omega}_l,\tilde{\xi})\right)L_p^{S}\left(\xi(\widetilde{\Omega}_l,\tilde{\xi})\right)d\tilde{\xi} =  \hat{\omega}_k\left[\hat{u}_k^{(n)}\right]_l, \forall\widetilde{\Omega}_l\in\widetilde{\mathcal{S}}_{i}\backslash\widetilde{\Omega}_i.
\end{equation}
In our case, there are three elements in the stencil, which provide $3(n+1)$ equations in \cref{eq: conditions_xi} with $n+1$ additional constraints in \cref{eq: conditions_n}. A classical approach is to use a constrained least-squares technique \cite{dumbser_arbitrary_2007,dumbser_unified_2008}. It leads to a linear system size of $4(n+1)\times4(n+1)$, which is computationally expensive. 

We emphasize that since the first $n+1$ modal coefficients remain unchanged in \cref{eq: conditions_n}, the reconstruction task is equivalent to computing the high-order components, $\underline{\widetilde{\hat{w}}}=[\hat{w}_{n+1},\dots,\hat{w}_{m}]^{\top}\in \mathbb{R}^r$ (the index denoting the target element is dropped for clarity), by imposing another $2(n+1)$ condition from the neighboring elements. To derive the new reconstruction algorithm, we keep the modal form on the left side of \cref{eq: w_hat = u_hat} and write the right side in nodal form:
\begin{equation}
\label{eq: w = u_hat}   \int_{\Omega_l}w^{(m)}\left(\xi(\widetilde{\Omega}_l,\tilde{\xi})\right)L_k\left(\xi(\widetilde{\Omega}_l,\tilde{\xi})\right) = \hat{\omega}_k\left[\hat{u}_k^{(n)}\right]_l, \forall\widetilde{\Omega}_l\in\widetilde{\mathcal{S}}_{i}\backslash\widetilde{\Omega}_i.
\end{equation}
\subsection*{Discrete algorithm}The use of LGL-quadrature of required accuracy leads to the discrete form. Taking the left-hand side element as an example, its discrete form is:
\begin{equation}    \left[\underline{\underline{N}}^{(n)}\right]^{-1}\overline{\underline{\underline{V}}^{(n^{\prime})}}^{\top}\underline{\underline{M}}^{(n^{\prime})}\left[\underline{w}^{(n^{\prime})}\right]_{i-1} = \left[ {\underline{\hat{u}}^{(n)}}\right]_{i-1},
\end{equation}
which is equivalent to:
\begin{equation}
\label{eq:Piwu}
\underline{\underline{\Pi}}^{n}_{n^{\prime}}\left[\underline{w}^{(n^{\prime})}\right]_{i-1} = \left[{\underline{u}^{(n)}}\right]_{i-1},
\end{equation}
where $\underline{\underline{\Pi}}^{n}_{n^{\prime}}:=\underline{\underline{V}}^{(n)}\overline{\left[ \underline{\underline{V}}^{(n^{\prime})} \right]^{-1}}$ is the projection operator from $\mathbb{R}^{n^{\prime}}$ to $\mathbb{R}^{n}$ and $n^{\prime}$ is the order of LGL quadrature needed to compute the left hand in \cref{eq: w = u_hat}. Noting that $\left[\underline{w}^{(n^{\prime})}\right]_{i-1}$ is the extension of $w$ to the left element $\Omega_{i-1}$ evaluated on the LGL quadrature nodes of $n^{\prime}$-order, it can be written as
\begin{equation}
\label{eq:extension_left}
    \left[\underline{w}^{(n^{\prime})}\right]_{i-1} = \left[\overline{\underline{\underline{V}}^{(n^{\prime})}}\right]_{i-1}\underline{\hat{w}}^{(n)}+\left[\widetilde{\underline{\underline{V}}^{(n^{\prime})}}\right]_{i-1}\underline{\widetilde{\hat{w}}},
\end{equation}
where $\left[\overline{\underline{\underline{V}}^{(n^{\prime})}}\right]_{i-1}$ and $\left[\widetilde{\underline{\underline{V}}^{(n^{\prime})}}\right]_{i-1}$ are the low- and high-order parts of the Legendre polynomials extended on $n^{\prime}+1$ LGL nodes in $\Omega_{i-1}$. Inserting \cref{eq:extension_left} into \cref{eq:Piwu} and using conditions \cref{eq: conditions_n} lead to
\begin{multline}
\underline{\underline{\Pi}}^{n}_{n^{\prime}}\left[\widetilde{\underline{\underline{V}}^{(n^{\prime})}}\right]_{i-1}\underline{\widetilde{\hat{w}}} = \left[{\underline{u}^{(n)}}\right]_{i-1} - \underline{\underline{\Pi}}^{n}_{n^{\prime}}\left[\overline{\underline{\underline{V}}^{(n^{\prime})}}\right]_{i-1}\underline{\hat{u}}^{(n)} \\= \left[{\underline{u}^{(n)}}\right]_{i-1} -\underline{\underline{\Pi}}^{n}_{n^{\prime}}\left[\overline{\underline{\underline{V}}^{(n^{\prime})}}\right]_{i-1}\left[\underline{\underline{V}}^{(n)}\right]^{-1}\underline{u}^{(n)}.
\end{multline} 
The similar approach can be applied to the right element $\Omega_{i+1}$ and assembling them together produces a least-squares (LS) problem:
\begin{equation}
    \underline{\underline{\Phi}}\,\underline{\widetilde{\hat{w}}} = \underline{\mu},
\end{equation}
where $\underline{\underline{\Phi}}\in \mathbb{R}^{(2n+2)\times r}$ and $\underline{\mu} \in \mathbb{R}^{(2n+2)}$ are composed of contributions from the left and right elements:
\begin{equation*}
    \underline{\underline{\Phi}}:=\left[ \begin{array}{l}
          \underline{\underline{\Pi}}^{n}_{n^{\prime}}\left[\widetilde{\underline{\underline{V}}^{(n^{\prime})}}\right]_{i-1} \\
         \underline{\underline{\Pi}}^{n}_{n^{\prime}}\left[\widetilde{\underline{\underline{V}}^{(n^{\prime})}}\right]_{i+1}
    \end{array} \right],\;\underline{\mu} = \left[ \begin{array}{l}
          \left[{\underline{u}^{(n)}}\right]_{i-1} -\underline{\underline{\Pi}}^{n}_{n^{\prime}}\left[\overline{\underline{\underline{V}}^{(n^{\prime})}}\right]_{i-1}\left[\underline{\underline{V}}^{(n)}\right]^{-1}\underline{u}^{(n)} \\
         \left[{\underline{u}^{(n)}}\right]_{i+1} -\underline{\underline{\Pi}}^{n}_{n^{\prime}}\left[\overline{\underline{\underline{V}}^{(n^{\prime})}}\right]_{i+1}\left[\underline{\underline{V}}^{(n)}\right]^{-1}\underline{u}^{(n)}
    \end{array} \right].
\end{equation*}
The LS problem can be solved by:
\begin{equation}
    \underline{\widetilde{\hat{w}}} = \underline{\underline{\Phi}}^{+}\underline{\mu},
\end{equation}
and then compute the nodal values of the high-order components:
\begin{equation}
    \underline{\tilde{w}}^{(m)} = \widetilde{\underline{\underline{V}}^{(m)}} \underline{\underline{\Phi}}^{+}\underline{\mu}.
\end{equation}
Finally, the reconstructed high-order nodal solution is as follows:
\begin{equation}
    \underline{w}^{(m)}  = \underline{\underline{\Pi}}^{\ast}\underline{u}^{(n)}+\underline{\tilde{w}}^{(m)}.
\end{equation}

When implementing the reconstruction step, the products of the matrices, i.e., $\widetilde{\underline{\underline{V}}^{(m)}} \underline{\underline{\Phi}}^{+}$, $\underline{\underline{\Pi}}^{n}_{n^{\prime}}\left[\overline{\underline{\underline{V}}^{(n^{\prime})}}\right]_{i+1}\left[\underline{\underline{V}}^{(n)}\right]^{-1}$, and $\underline{\underline{\Pi}}^{n}_{n^{\prime}}\left[\overline{\underline{\underline{V}}^{(n^{\prime})}}\right]_{i-1}\left[\underline{\underline{V}}^{(n)}\right]^{-1}$, are calculated and stored before simulation to improve efficiency.

\subsection*{Error estimates}
By defining the local transformed broken polynomial space attached to $\Omega_i$:
\begin{equation}
    \mathcal{V}^{(n)}_{\widetilde{\mathcal{S}}_i}:=\{ v\in L^2(\widetilde{\omega}_i)\,\big\vert\,v\vert_{\widetilde{\Omega}_j}\in\mathbb{P}_n(\widetilde{\Omega}_j),\forall\widetilde{\Omega}_j\in\widetilde{\mathcal{S}}_i\},
\end{equation}
the local reconstruction operator can be denoted by:
\begin{equation}
    \widetilde{\mathcal{R}}_i: \mathcal{V}^{(n)}_{\widetilde{\mathcal{S}}_i} \to \mathbb{P}_m(E),
\end{equation}
where $\widetilde{\omega}_i:= \bigcup_{\widetilde{\Omega}_j\in \widetilde{S}_i}\widetilde{\Omega}_j$. Then, the following lemma summarizes the local consistency and stability properties of the reconstruction operator on the transformed stencil.
\begin{lemma}
\label{lemma:projector_property_local}
Let $\widetilde{\mathcal{R}}_i$ be the $m$-exact local reconstruction operator defined on the transformed stencil $\widetilde{\mathcal{S}}_i$ attached to the element $\Omega_i$. Assume that the reconstruction procedure is well-posed and uniformly bounded. Then $\widetilde{\mathcal{R}}_i$ satisfies the following properties:
\begin{enumerate}
    \item \textbf{Consistency}:
    \begin{multline}
        \bigg\Vert v^{(m)}\big\vert_E
        - \widetilde{\mathcal{R}}_i\left(v^{(n)}\right)
        \bigg\Vert_{L^2(E)}
        \leq
        \widetilde{C}_{\mathcal{R}_c}
        \vert v \vert_{H^{m+1}(\widetilde{\omega}_i)}, \\
        v^{(m)}=\widetilde{\Pi}^{(m)}v,\quad
        v^{(n)}=\widetilde{\Pi}^{(n)}v,\quad
        \forall v\in H^{m+1}(\widetilde{\omega}_i).
    \end{multline}

    \item \textbf{Stability}:
    \begin{multline}
        \bigg\Vert
        \widetilde{\mathcal{R}}_i\left(v^{(n)}\right)
        -
        \widetilde{\mathcal{R}}_i\left(w^{(n)}\right)
        \bigg\Vert_{L^2(E)}
        \leq
        \widetilde{C}_{\mathcal{R}_s}
        \Vert v^{(n)}-w^{(n)} \Vert_{L^2(\widetilde{\omega}_i)}, \\
        \forall v^{(n)},w^{(n)}
        \in
        \mathcal{V}^{(n)}_{\widetilde{\mathcal{S}}_i}.
    \end{multline}
\end{enumerate}
Here $C_{\mathcal{R}_c}^{\mathrm{loc}}$ and $C_{\mathcal{R}_s}^{\mathrm{loc}}$ are constants independent of $h$; $\widetilde{\omega}_i := \bigcup_{\widetilde{\Omega}_j\in \widetilde{\mathcal{S}}_i}\widetilde{\Omega}_j$; and
$\widetilde{\Pi}^{(p)}:L^2(\widetilde{\omega}_i)\to \mathcal{V}^{(p)}_{\widetilde{\mathcal{S}}_i}$, $p=n,m$, denotes the piecewise $L^2$-projection operator onto polynomial spaces of degree $p$.
\end{lemma}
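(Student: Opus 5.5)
Both properties reduce to three facts about $\widetilde{\mathcal{R}}_i$ on the fixed transformed stencil: it is linear, it is $m$-exact, and it is bounded (the last being the assumed well-posedness and uniform boundedness). On the transformed stencil, $\widetilde{\mathcal{R}}_i(v^{(n)})$ has low modes $[\hat{u}^{(n)}]_i$ by \cref{eq: conditions_n}, and high modes $\underline{\widetilde{\hat{w}}}=\underline{\underline{\Phi}}^{+}\underline{\mu}$, where $\underline{\mu}$ depends linearly on the nodal data of $v^{(n)}$ in the three elements. Hence $\widetilde{\mathcal{R}}_i$ is a linear map on the finite-dimensional space $\mathcal{V}^{(n)}_{\widetilde{\mathcal{S}}_i}$.

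\textbf{Stability.} I would prove this first. By linearity, $\widetilde{\mathcal{R}}_i(v^{(n)})-\widetilde{\mathcal{R}}_i(w^{(n)})=\widetilde{\mathcal{R}}_i(v^{(n)}-w^{(n)})$, so it suffices to bound the operator norm of $\widetilde{\mathcal{R}}_i$ from $L^2(\widetilde{\omega}_i)$ to $L^2(E)$. Its matrix representation in Legendre coordinates is built from $\underline{\underline{\Phi}}^{+}$, $\underline{\underline{\Pi}}^{n}_{n'}$, the extended Vandermonde blocks and $[\underline{\underline{V}}^{(n)}]^{-1}$. On the reference configuration these depend only on $n$, $m$ and the relative sizes of neighbouring elements, not on $h$. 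Norm equivalence on finite-dimensional spaces (for example, via $\Vert v\Vert^2_{L^2(E)}=\sum_k\hat{\omega}_k\hat{v}_k^2$) then gives $\widetilde{C}_{\mathcal{R}_s}$. Well-posedness means $\underline{\underline{\Phi}}$ has full column rank $r$, so $\underline{\underline{\Phi}}^{+}$ is bounded. Uniform boundedness means the neighbour size ratios are controlled, so the constant is uniform over the mesh.

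\textbf{Exactness.} For $p\in\mathbb{P}_m(\widetilde{\omega}_i)$, I would show $\widetilde{\mathcal{R}}_i(\widetilde{\Pi}^{(n)}p)=p|_E$. The function $w=p$ satisfies all conditions \cref{eq: conditions_n} and \cref{eq: w = u_hat} exactly. The LGL rule of order $n'$ integrates $p L_k$ exactly, so the discrete relation \cref{eq:Piwu} also holds for $p$. Hence $\underline{\mu}=\underline{\underline{\Phi}}\,\underline{\widetilde{\hat{p}}}$. Because $\underline{\underline{\Phi}}$ has full column rank, $\underline{\underline{\Phi}}^{+}\underline{\underline{\Phi}}=I$, which recovers $\underline{\widetilde{\hat{p}}}$.

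\textbf{Consistency.} For $v\in H^{m+1}(\widetilde{\omega}_i)$ and any $p\in\mathbb{P}_m$, I would split
\begin{displaymath}
v^{(m)}|_E-\widetilde{\mathcal{R}}_i(v^{(n)}) = \big(\widetilde{\Pi}^{(m)}(v-p)\big)|_E - \widetilde{\mathcal{R}}_i\big(\widetilde{\Pi}^{(n)}(v-p)\big),
\end{displaymath}
using exactness and the fact that both projections reproduce $p$. The $L^2$-stability of the projections and the stability bound give
\begin{displaymath}
\Vert v^{(m)}|_E-\widetilde{\mathcal{R}}_i(v^{(n)})\Vert_{L^2(E)}\le (1+\widetilde{C}_{\mathcal{R}_s})\Vert v-p\Vert_{L^2(\widetilde{\omega}_i)}.
\end{displaymath}
Taking the infimum over $p$ and applying the Bramble--Hilbert lemma on the connected patch $\widetilde{\omega}_i$ yields the bound with $\widetilde{C}_{\mathcal{R}_c}=(1+\widetilde{C}_{\mathcal{R}_s})C_{BH}$.

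\textbf{Main obstacle.} The hard part is justifying the full column rank of $\underline{\underline{\Phi}}$, together with a bound on $\Vert\underline{\underline{\Phi}}^{+}\Vert$ that is uniform in the stencil geometry. Since this is exactly what the hypothesis "well-posed and uniformly bounded" is meant to provide, I would invoke it rather than prove it. I would remark that for uniform meshes $\underline{\underline{\Phi}}$ is a fixed matrix, and that $2(n+1)\ge r$ is necessary for full rank. A secondary point is that the Bramble--Hilbert constant must be uniform across patches; the uniform shape-regularity of the three-element stencil in reference coordinates supplies this.
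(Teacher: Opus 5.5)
The paper gives no proof of this lemma. It states consistency and stability as properties that hold under the hypotheses ($m$-exactness, well-posedness, uniform boundedness) and then uses them directly in \cref{lemma:projector_property_global}. Your argument is correct and supplies the missing derivation by the standard route:
\begin{enumerate}
\item Linearity of $\widetilde{\mathcal{R}}_i$ on the finite-dimensional space $\mathcal{V}^{(n)}_{\widetilde{\mathcal{S}}_i}$ gives stability. The constant is controlled by $\Vert\underline{\underline{\Phi}}^{+}\Vert$ and the neighbour-size ratios.
\item Full column rank of $\underline{\underline{\Phi}}$ gives $\underline{\underline{\Phi}}^{+}\underline{\underline{\Phi}}=I$, hence $m$-exactness. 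Exactness thus becomes a consequence of well-posedness rather than a separate assumption.
\item Exactness, stability and Bramble--Hilbert on the connected patch together give consistency, with $\widetilde{C}_{\mathcal{R}_c}=(1+\widetilde{C}_{\mathcal{R}_s})C_{BH}$.
\end{enumerate}
Compared with the paper's bare statement, your version gives the vague hypotheses a concrete meaning: the rank and conditioning of $\underline{\underline{\Phi}}$, bounded neighbour ratios, and the necessary count $2(n+1)\ge r$. It also isolates the only input left unverified, namely the rank of $\underline{\underline{\Phi}}$. The paper's formulation, in exchange, applies verbatim to any linear reconstructor with those properties.

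Three small precision points.
\begin{enumerate}
\item In the consistency split it is not true that both projections reproduce $p$, since $\widetilde{\Pi}^{(n)}p\neq p$ for $p\in\mathbb{P}_m$. What you actually use is $\widetilde{\Pi}^{(m)}p=p$, together with $\widetilde{\mathcal{R}}_i(\widetilde{\Pi}^{(n)}p)=p|_E$ and linearity; with these the identity is correct.
\item For a fixed stencil, $\underline{\underline{\Phi}}^{+}$ is a finite matrix and is bounded whatever its rank. Full column rank is what you need for exactness, and constant rank is what makes $\underline{\underline{\Phi}}^{+}$ depend continuously on the neighbour ratios. That continuity is what gives a uniform constant.
\item The discrete relation \cref{eq:Piwu} holds exactly for $p$ only if $2n'-1\ge m+n$ (guaranteed, for instance, by $n'\ge m$). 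This is what the paper's choice of $n'$ is meant to ensure.
\end{enumerate}
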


Using \cref{lemma:projector_property_local}, the extension to the physical and global space is quite straightforward.

\begin{lemma}[Physical-space reconstruction estimates]
\label{lemma:projector_property_global}
Let $\mathcal{T}_h=\{\Omega_i\}_{i=1}^{K}$ be a shape-regular mesh with mesh size $h$, and let $\mathcal{S}_i$ denote the physical reconstruction stencil associated with the element $\Omega_i$. Define the stencil domain
\begin{equation}
    \omega_i:=\bigcup_{\Omega_j\in\mathcal{S}_i}\Omega_j.
\end{equation}

Assume that the transformed reconstruction operator $\widetilde{\mathcal{R}}_i$ satisfies the consistency and stability properties stated in \cref{lemma:projector_property_local}. Let $\mathcal{R}_i:\mathcal{V}_h^{(n)}\big\vert_{\omega_i}\to \mathbb{P}_m(\Omega_i)$ denote the corresponding physical reconstruction operator obtained through the affine mapping between $\omega_i$ and $\widetilde{\omega}_i$.

Then there exist constants $C_{\mathcal{R}_c}^{\mathrm{loc}},\;C_{\mathcal{R}_s}^{\mathrm{loc}}>0$, independent of $h$, such that:
\begin{enumerate}
    \item \textbf{Local physical consistency:} 
    \begin{equation}
        \left\|v^{(m)}\big\vert_{\Omega_i}-\mathcal{R}_i\left(v^{(n)}\right)\right\|_{L^2(\Omega_i)}
        \le
        C_{\mathcal{R}_c}^{\mathrm{loc}}h^{m+1}\vert v \vert_{H^{m+1}(\omega_i)},\;\forall v\in H^{m+1}(\omega_i)
    \end{equation}
    where
    \[v^{(m)}=\left(\Pi_h^{(m)}v\right)\Big\vert_{\omega_i},\qquad v^{(n)}=\left(\Pi_h^{(n)}v\right)\Big\vert_{\omega_i}.\]

    \item \textbf{Local physical stability:} 
    \begin{multline}
        \left\|\mathcal{R}_i\left(v^{(n)}\right)-\mathcal{R}_i\left(w^{(n)}\right)\right\|_{L^2(\Omega_i)}
        \le
        C_{\mathcal{R}_s}^{\mathrm{loc}}
        \left\|v^{(n)}-w^{(n)}\right\|_{L^2(\omega_i)},\\ 
        \forall v^{(n)},w^{(n)}\in\mathcal{V}_h^{(n)}\big\vert_{\omega_i}.
    \end{multline}
\end{enumerate}

Moreover, assume that the reconstruction stencils possess uniformly bounded overlap. Define the global reconstruction operator $\mathcal{R}_h$ by
\begin{equation}
    \left(\mathcal{R}_h v_h^{(n)}\right)\big\vert_{\Omega_i}
    =
    \mathcal{R}_i\left(v_h^{(n)}\big\vert_{\omega_i}\right).
\end{equation}

Then the following global estimates hold:

\begin{enumerate}
    \setcounter{enumi}{2}
    \item \textbf{Global consistency:}
    \begin{equation}
        \left\|v_h^{(m)}-\mathcal{R}_h\left(v_h^{(n)}\right)\right\|_{L^2(\Omega)}
        \le
        C_{\mathcal{R}_c}h^{m+1}\vert v\vert_{H^{m+1}(\Omega)},\;\forall v\in H^{m+1}(\Omega).
    \end{equation}
    where
    \[v^{(m)}_h=\Pi_h^{(m)}v,\qquad v^{(n)}_h=\Pi_h^{(n)}v.\]

    \item \textbf{Global stability:}
    \begin{multline}
    \label{eq:global_stability}
        \left\|\mathcal{R}_h\left(v_h^{(n)}\right)-\mathcal{R}_h\left(w_h^{(n)}\right)
        \right\|_{L^2(\Omega)}
        \le
        C_{\mathcal{R}_s}\left\|v_h^{(n)}-w_h^{(n)}\right\|_{L^2(\Omega)},\\ 
        \forall v_h^{(n)},w_h^{(n)}\in \mathcal{V}_h^{(n)}.
    \end{multline}
\end{enumerate}

The constants $C_{\mathcal{R}_c}$ and $C_{\mathcal{R}_s}$ are independent of $h$.
\end{lemma}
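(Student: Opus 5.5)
The plan is a standard scaling argument: transfer the reference-stencil estimates of \cref{lemma:projector_property_local} to physical space via the affine map $F_i:\widetilde{\omega}_i\to\omega_i$, $x=x_i^c+\tfrac{h_i}{2}\xi$, attached to $\Omega_i$. Then sum over elements, using the bounded overlap of the stencils. I will first record the commutation relations. Since $F_i$ is affine, piecewise $L^2$-projections on $\omega_i$ pull back to piecewise $L^2$-projections on $\widetilde{\omega}_i$. Concretely, for $\hat v:=v\circ F_i$ we have $(\Pi_h^{(p)}v)\circ F_i=\widetilde{\Pi}^{(p)}\hat v$ for $p=n,m$, because polynomial degree and orthogonality with respect to a constant-Jacobian measure are preserved. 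By definition, $\mathcal{R}_i(v^{(n)})\circ F_i=\widetilde{\mathcal{R}}_i(\hat v^{(n)})$.

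For the local consistency estimate, I will change variables in the $L^2(\Omega_i)$ norm:
\[\|v^{(m)}-\mathcal{R}_i(v^{(n)})\|_{L^2(\Omega_i)}=\left(\tfrac{h_i}{2}\right)^{1/2}\|\hat v^{(m)}-\widetilde{\mathcal{R}}_i(\hat v^{(n)})\|_{L^2(E)}.\]
I then apply item 1 of \cref{lemma:projector_property_local}, and scale the seminorm back using the chain rule $\partial_\xi^{m+1}\hat v=(h_i/2)^{m+1}(\partial_x^{m+1}v)\circ F_i$:
\[|\hat v|_{H^{m+1}(\widetilde{\omega}_i)}=(h_i/2)^{m+1}(h_i/2)^{-1/2}|v|_{H^{m+1}(\omega_i)}.\]
The factors of $h_i^{\pm 1/2}$ cancel, and $h_i\le h$ gives item 1 with $C^{\rm loc}_{\mathcal{R}_c}=2^{-(m+1)}\widetilde{C}_{\mathcal{R}_c}$.

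One subtlety is that the neighbouring elements have sizes $h_{i\pm1}$, not $h_i$. So $\widetilde{\omega}_i$, and hence $\widetilde{\mathcal{R}}_i$ and its constants, depend on the ratios $h_{i\pm1}/h_i$. I expect this to be the main obstacle. I would handle it by invoking shape-regularity (local quasi-uniformity): the ratios lie in a compact set $[\rho^{-1},\rho]$. The well-posedness and uniform boundedness assumed in \cref{lemma:projector_property_local} should then be read as uniformity of $\widetilde{C}_{\mathcal{R}_c},\widetilde{C}_{\mathcal{R}_s}$ over this family of reference stencil geometries. Continuity of the pseudo-inverse $\underline{\underline{\Phi}}^+$ in the geometric parameters, at full column rank, justifies this reading.

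Local stability follows from the same change of variables together with item 2 of \cref{lemma:projector_property_local}. Both sides of the estimate scale by $(h_i/2)^{1/2}$, since $L^2(\omega_i)$ and $L^2(\widetilde{\omega}_i)$ are related by the same Jacobian. Hence $C^{\rm loc}_{\mathcal{R}_s}=\widetilde{C}_{\mathcal{R}_s}$, and no power of $h$ appears.

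For the global estimates, I square the local bounds and sum over $i$. I use $\|\cdot\|^2_{L^2(\Omega)}=\sum_i\|\cdot\|^2_{L^2(\Omega_i)}$, and note that $(\Pi_h^{(p)}v)|_{\omega_i}$ coincides with the local projection because the projection is elementwise. This gives
\[\sum_i|v|^2_{H^{m+1}(\omega_i)}\le N_{\rm ov}|v|^2_{H^{m+1}(\Omega)},\qquad \sum_i\|v_h^{(n)}-w_h^{(n)}\|^2_{L^2(\omega_i)}\le N_{\rm ov}\|v_h^{(n)}-w_h^{(n)}\|^2_{L^2(\Omega)},\]
where $N_{\rm ov}$ bounds the number of stencils containing any given element ($N_{\rm ov}=3$ in 1D). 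Taking square roots yields items 3 and 4 with $C_{\mathcal{R}_c}=\sqrt{N_{\rm ov}}\,C^{\rm loc}_{\mathcal{R}_c}$ and $C_{\mathcal{R}_s}=\sqrt{N_{\rm ov}}\,C^{\rm loc}_{\mathcal{R}_s}$. At physical boundaries, where stencils are truncated or use ghost elements, I assume the same uniform well-posedness hypothesis applies.
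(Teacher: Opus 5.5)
Your proposal is correct and follows essentially the same route as the paper. Both transfer the reference-stencil estimates of \cref{lemma:projector_property_local} to $\Omega_i$ by affine scaling, with a factor $h^{1/2}$ for the $L^2$ norm and $h^{m+1/2}$ for the $H^{m+1}$ seminorm, and then square, sum over elements, and invoke bounded stencil overlap. Your extra care about the neighbour ratios $h_{i\pm1}/h_i$ makes explicit a uniformity of $\widetilde{C}_{\mathcal{R}_c},\widetilde{C}_{\mathcal{R}_s}$ over the family of transformed stencils that the paper uses only tacitly; note that in 1D shape-regularity is vacuous, so bounded neighbour ratios (local quasi-uniformity) must be assumed separately rather than deduced from it.
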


\begin{proof}
We first derive the local physical estimates from the transformed-stencil estimates. Consider the affine mapping \cref{eq: geo_mapping}, and denote by $\breve{v}$ the transformation of $v$ onto the transformed stencil $\widetilde{\omega}_i$.

By Lemma~\ref{lemma:projector_property_local},
\begin{equation}
    \left\|
    \breve{v}^{(m)}|_E
    -
    \widetilde{\mathcal{R}}_i\left(\breve{v}^{(n)}\right)
    \right\|_{L^2(E)}
    \le
    \widetilde{C}_{\mathcal{R}_c}
    |\breve{v}|_{H^{m+1}(\widetilde{\omega}_i)}.
\end{equation}

Using standard affine-scaling relations for Sobolev semi-norms,
\begin{equation}
    |\breve{v}|_{H^{m+1}(\widetilde{\omega}_i)}
    \lesssim h^{m+1/2}
    |v|_{H^{m+1}(\omega_i)},
\end{equation}
while the $L^2$-norm scales as
\begin{equation}
    \|e\|_{L^2(\Omega_i)}
    \lesssim h^{1/2}
    \|\breve{e}\|_{L^2(E)}.
\end{equation}

Combining the above estimates yields
\begin{equation}
    \left\|v^{(m)}\big\vert_{\Omega_i}-\mathcal{R}_i\left(v^{(n)}\right)\right\|_{L^2(\Omega_i)}
    \le
    C_{\mathcal{R}_c}^{\mathrm{loc}}h^{m+1}|v|_{H^{m+1}(\omega_i)}.
\end{equation}

Similarly, the local transformed stability estimate together with the equivalence of norms under affine mappings implies
\begin{equation}
    \left\|\mathcal{R}_i\left(v^{(n)}\right)-\mathcal{R}_i\left(w^{(n)}\right)\right\|_{L^2(\Omega_i)}
    \le
    C_{\mathcal{R}_s}^{\mathrm{loc}}\left\|v^{(n)}-w^{(n)}\right\|_{L^2(\omega_i)}.
\end{equation}

We now derive the global estimates. Summing the local consistency estimate over all elements gives
\begin{equation}
    \sum_{i=1}^{K}
    \left\|
    v_h^{(m)}\big\vert_{\Omega_i}
    -
    \mathcal{R}_i\left(v_h^{(n)}\big\vert_{\omega_i}\right)
    \right\|_{L^2(\Omega_i)}^2
    \lesssim h^{2m+2}
    \sum_{i=1}^{K}
    |v|_{H^{m+1}(\omega_i)}^2.
\end{equation}

Since the stencil overlap is uniformly bounded, there exists a constant $C_{\mathrm{ov}}$ such that
\begin{equation}
    \sum_{i=1}^{K}
    |v|_{H^{m+1}(\omega_i)}^2
    \leq C_{\mathrm{ov}}
    |v|_{H^{m+1}(\Omega)}^2.
\end{equation}

Therefore,
\begin{equation}
    \left\|
    v_h^{(m)}
    -
    \mathcal{R}_h\left(v_h^{(n)}\right)
    \right\|_{L^2(\Omega)}
    \le
    C_{\mathcal{R}_c}
    h^{m+1}
    |v|_{H^{m+1}(\Omega)}.
\end{equation}

The proof of the global stability estimate follows analogously by summing the local stability estimate over all elements and applying the bounded-overlap property of the stencils.
\end{proof}

\section{Corrected \texorpdfstring{\(\mathbb{P}_n\mathbb{P}_m\)} scheme and error estimates} Coupling the simplified formulation for high-order time derivatives in \cref{eq:projected_ho_time_derivative} with any $m^{th}$-order reconstructor leads to the corrected $\mathbb{P}_n\mathbb{P}_m$ scheme ($c\mathbb{P}_n\mathbb{P}_m$). In this paper, the local  $L^2$-projection-based reconstructor in \cref{sec:reconstruction} is adopted. For simplicity we will give here the proof only for the semi-discrete $c\mathbb{P}_n\mathbb{P}_m$ scheme for a scalar conservation law:
\begin{equation}
\label{eq:1d_conservation_law_scalar}
    \partial_t u + \partial_xf(u)=0
\end{equation}
in the domain $\Omega$ with periodic boundary conditions. 

We emphasize that the objective of this section is to provide a streamlined convergence proof for the $c\mathbb{P}_n\mathbb{P}_m$ scheme, relying on the assumed convergence properties of the standard DGSEM-LGL method. A complete proof based on the detailed structure of the DGSEM-LGL operator, including all derivative contributions, is therefore not pursued here.

Assuming that $\mathcal{T}_h=\{\Omega_i\}_{i=1}^{K}$ is a tessellation of the domain $\Omega$ into non-overlapping $K$ elements with element size of $h$, then the local semi-discrete $c\mathbb{P}_n\mathbb{P}_m$ operator in the reference element reads:
\begin{equation}
\label{eq:cPnPm}
\begin{split}
    \mathcal{J}_i\frac{d}{dt}\underline{u}^{(n)}_{\mathcal{R},i} = &-\frac{\delta f^{\ast(n,m)}_{L,i}}{{\omega_0^{(n)}}}{\underline{b}_L^{(n)}} - \frac{\delta f^{\ast(n,m)}_{R,i}}{{\omega_n^{(n)}}}{\underline{b}_R^{(n)}} -\underline{\underline{D}}^{(n)}\overline{\underline{f}^{(m)}}\left(  \underline{u}^{\prime(m)}_{\mathcal{R},i}\right)
    \\
    &- \frac{n+1}{2}\left(\delta f^{\ast(n,m)}_{R,i}-(-1)^{n}\delta f^{\ast(n,m)}_{L,i}\right) {\underline{v}_n^{(n)}}.
\end{split}
\end{equation}
where $\underline{u}^{\prime(m)}_{\mathcal{R},i} := \underline{\underline{\mathcal{\widetilde{\mathcal{R}}}_i}}\left( \underline{u}^{(n)}_{\mathcal{R},i-1},\underline{u}^{(n)}_{\mathcal{R},i},\underline{u}^{(n)}_{\mathcal{R},i+1} \right)$ is the locally reconstructed high-order solution; $\underline{\underline{\mathcal{\widetilde{\mathcal{R}}}_i}} $ is the discrete reconstruction operator; and
\begin{align*}
    \delta f^{\ast(n,m)}_{L,i} :&= {f^{\ast}_L}^{(m)}\left( \underline{u}^{\prime(m)}_{\mathcal{R},i-1}, \underline{u}^{\prime(m)}_{\mathcal{R},i}\right)-\overline{f_0^{(m)}}\left( \underline{u}^{\prime(m)}_{\mathcal{R},i}\right)  \\
    \delta f^{\ast(n,m)}_{R,i}:&= {f^{\ast}_R}^{(m)}\left( \underline{u}^{\prime(m)}_{\mathcal{R},i}, \underline{u}^{\prime(m)}_{\mathcal{R},i+1}\right)-\overline{f_n^{(m)}}\left( \underline{u}^{\prime(m)}_{\mathcal{R},i}\right),
\end{align*}
and $\underline{u}^{(n)}_{\mathcal{R},i} \in \mathbb{P}_n$ denotes the solution of the $c\mathbb{P}_n\mathbb{P}_m$ scheme. Similarly, the two-dimensional version can be obtained by coupling \cref{eq:2d_projected_ho_time_derivative} with the corresponding reconstruction.

\subsection*{Projection error}Assuming $u(x,t)\in L^{\infty}(0,T;H^{m+1}(\Omega))$ denotes the global smooth exact solution of \cref{eq:1d_conservation_law_scalar}, then the exact numerical solution of $m^{th}$-order is defined as the broken $L^2$-projection of $u$. Using approximation theory\cite{doi:10.1137/1.9780898719208}, we introduce the following lemma.
\begin{lemma}
\label{lemma:projection_error}
Let $\Pi^{(m)}_h:L^2(\Omega)\to\mathcal{V}^{(m)}_h$
be the global piecewise $L^2$-projection operator up to order $m$.
Assume that $u\in H^{m+1}(\Omega)$. Then the projection error satisfies
\begin{equation}
    \left\| \epsilon_{\Pi}\right\|
    =
    \left\| u-u^{(m)}_{h,\ast}\right\|
    \leq
    C_{\Pi}h^{m+1}\vert u\vert_{H^{m+1}(\Omega)},
\end{equation}
where
\[
u^{(m)}_{h,\ast}:=\Pi^{(m)}_h u
\]
is the projected exact solution of order $m$,
$\Vert\cdot\Vert=\Vert\cdot\Vert_{L^2(\Omega)}$,
and $C_{\Pi}>0$ is independent of $h$.
\end{lemma}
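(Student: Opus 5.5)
The plan is the standard Bramble--Hilbert argument on the reference element, followed by affine scaling and summation over elements. Since $\Pi^{(m)}_h$ is the broken $L^2$-projection, it acts element by element: $(\Pi^{(m)}_h u)\vert_{\Omega_i}=\Pi^{(m)}_{\Omega_i}(u\vert_{\Omega_i})$. Hence
\begin{equation*}
\Vert u-u^{(m)}_{h,\ast}\Vert^2=\sum_{i=1}^K\Vert u-\Pi^{(m)}_{\Omega_i}u\Vert^2_{L^2(\Omega_i)},
\end{equation*}
and it suffices to prove the local estimate $\Vert u-\Pi^{(m)}_{\Omega_i}u\Vert_{L^2(\Omega_i)}\le C_{\Pi}h^{m+1}\vert u\vert_{H^{m+1}(\Omega_i)}$, with $C_{\Pi}$ independent of $i$ and $h$.

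For the local estimate, pull $u\vert_{\Omega_i}$ back to $E$ with the affine map \cref{eq: geo_mapping} and call the result $\breve u$. Because the map is affine with constant Jacobian $\mathcal{J}_i=h/2$, the $L^2$-projection commutes with it, so $\breve{(\Pi^{(m)}_{\Omega_i}u)}=\hat\Pi^{(m)}\breve u$, where $\hat\Pi^{(m)}$ is the $L^2(E)$-projection onto $\mathbb{P}_m(E)$, i.e. truncation of the Legendre expansion. On $E$ the key fact is that $\hat\Pi^{(m)}$ reproduces $\mathbb{P}_m$ and has norm $1$ in $L^2(E)$. So for any $p\in\mathbb{P}_m$,
\begin{equation*}
\Vert \breve u-\hat\Pi^{(m)}\breve u\Vert_{L^2(E)}=\Vert (I-\hat\Pi^{(m)})(\breve u-p)\Vert_{L^2(E)}\le \Vert\breve u-p\Vert_{L^2(E)}.
\end{equation*}
Taking the infimum over $p$ and applying the Bramble--Hilbert (Deny--Lions) lemma on the fixed domain $E$ gives $\inf_p\Vert\breve u-p\Vert_{H^{m+1}(E)}\le \hat C\vert\breve u\vert_{H^{m+1}(E)}$. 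The constant $\hat C$ depends only on $m$.

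Next comes the scaling step. The $L^2$ norm scales as $\Vert e\Vert_{L^2(\Omega_i)}=\mathcal{J}_i^{1/2}\Vert\breve e\Vert_{L^2(E)}$. The seminorm scales as $\vert\breve u\vert_{H^{m+1}(E)}=\mathcal{J}_i^{m+1/2}\vert u\vert_{H^{m+1}(\Omega_i)}$, since each derivative contributes a factor $h/2$. Combining the two gives the local bound with $C_{\Pi}=\hat C\,2^{-(m+1)}$. Summing the squares over $i$, and using that the elements do not overlap so that $\sum_i\vert u\vert^2_{H^{m+1}(\Omega_i)}=\vert u\vert^2_{H^{m+1}(\Omega)}$, completes the proof. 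On a merely shape-regular mesh with varying sizes one uses $h_i\le h$.

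I do not expect a real obstacle, since this is classical (Ciarlet). The one step needing care is justifying the Bramble--Hilbert constant on $E$. If a self-contained argument is preferred over citing the lemma, I would use the Legendre-coefficient decay $\sum_{k>m}\hat\omega_k\hat u_k^2\lesssim\vert\breve u\vert^2_{H^{m+1}(E)}$. This follows from the Sturm--Liouville eigen-relation for $L_k$ applied $m+1$ times, or alternatively from a Taylor-polynomial choice of $p$ together with Poincaré-type inequalities.
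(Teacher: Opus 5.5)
Your proposal is correct and follows the approach the paper relies on: the paper gives no proof of its own and only cites standard approximation theory (Ciarlet), whose argument is exactly your elementwise Bramble--Hilbert estimate on the reference element, combined with affine scaling and summation over the non-overlapping elements. Your scaling exponents ($\mathcal{J}_i^{1/2}$ for the $L^2$ norm, $\mathcal{J}_i^{m+1/2}$ for the $H^{m+1}$ seminorm) and the resulting constant $C_{\Pi}=\hat C\,2^{-(m+1)}$ are correct.
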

\begin{remark}
    In this paper, the projected exact solution
    $u^{(m)}_{h,\ast}=\Pi_h^{(m)}u$
    is referred to as the exact numerical solution of order $m$.
\end{remark}

\subsection*{Evolution error of DGSEM-LGL}For the sake of proof, the following lemma on the convergence of DGSEM-LGL is introduced later:
\begin{lemma}[Convergence of DGSEM-LGL]
\label{lem:DGSEM_convergence}
Assume that the exact solution satisfies
$u\in L^\infty(0,T;H^{m+1}(\Omega))$.
Let $\mathcal{P}^{(m)}_h:\mathcal{V}^{(m)}_h\to \mathcal{V}^{(m)}_h$ be the global $m^{th}$-order semi-discrete DGSEM-LGL operator (element-wise formulation is given in \cref{eq:DGSEM-LGL_0}) defined in $\mathcal{T}_h$ and $u_h^{(m)}\in\mathcal V_h^{(m)}$ denote the DGSEM-LGL solution of polynomial degree $m$:
\begin{equation}
    \frac{d}{dt}u_h^{(m)} = \mathcal{P}^{(m)}_h\left( u^{(m)}_h \right),
\end{equation}
and assume that the truncation error of $\mathcal{P}^{(m)}_h$:
\begin{equation}
    \tau_h^{(m)} := \frac{d}{dt}u_{h,\ast}^{(m)} - \mathcal{P}^{(m)}_h\left(u_{h,\ast}^{(m)}\right)
\end{equation}
is bounded by:
\begin{equation}
    \left\| \tau_h^{(m)} \right\|\leq C^{(m)}_{\tau}h^{m+1},
\end{equation}
where $C^{(m)}_{\tau}$ is a constant independent of $h$.
Define the evolution error by
\[
\epsilon_h^{(m)}
:=
u_h^{(m)}-u_{h,*}^{(m)},
\]
then the evolution error satisfies
\begin{equation}
\label{eq:convergence_rate}
\|\epsilon_h^{(m)}\|
\le
C^{(m)} h^{m+1},
\end{equation}
where $C^{(m)}>0$ is independent of $h$. 
\end{lemma}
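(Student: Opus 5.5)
The plan is the standard consistency-plus-stability argument for the error equation. Under the stated assumption on the truncation error $\tau_h^{(m)}$, the evolution error satisfies a linearized ODE. Subtracting the equation for the projected exact solution from the DGSEM-LGL equation gives
\begin{equation*}
    \frac{d}{dt}\epsilon_h^{(m)} = \mathcal{P}^{(m)}_h\left(u_h^{(m)}\right) - \mathcal{P}^{(m)}_h\left(u_{h,\ast}^{(m)}\right) - \tau_h^{(m)}.
\end{equation*}
We take the initial data as the projection, $u_h^{(m)}(0)=u_{h,\ast}^{(m)}(0)$, so that $\epsilon_h^{(m)}(0)=0$ (or at least $\mathcal{O}(h^{m+1})$). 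We then test this equation with $\epsilon_h^{(m)}$ in the discrete (LGL-weighted) $L^2$ inner product. Because the LGL mass matrix is diagonal with positive weights and is spectrally equivalent to the exact mass matrix uniformly in $h$, this norm is equivalent to $\Vert\cdot\Vert$.

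The key step is a one-sided Lipschitz (stability) estimate for the DGSEM-LGL operator. We aim to show
\begin{equation*}
    \left\langle \mathcal{P}^{(m)}_h(v)-\mathcal{P}^{(m)}_h(w),\, v-w\right\rangle_h \le L\,\Vert v-w\Vert^2,
\end{equation*}
with $L$ independent of $h$, at least for $v,w$ in an $L^\infty$-neighbourhood of the smooth solution. The argument goes through the SBP property $\underline{\underline{M}}\,\underline{\underline{D}}+\underline{\underline{D}}^{\top}\underline{\underline{M}}=\underline{\underline{B}}$. It converts the volume term into interface terms, which combine with the numerical fluxes to give a non-positive jump contribution for a monotone or upwind-type $f^{\ast}$. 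The nonlinear part of $f$ is handled by a Taylor expansion about $u_{h,\ast}^{(m)}$; the linearized coefficient $f'(u_{h,\ast}^{(m)})$ is smooth, and its commutator with $\underline{\underline{D}}$ contributes only a bounded multiple of $\Vert v-w\Vert^2$. Consistent with the paper's remark that it relies on assumed DGSEM-LGL properties, if a full proof is not pursued this estimate may simply be stated as the assumed stability of the scheme.

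With this estimate in hand the rest is routine. We obtain
\begin{equation*}
    \tfrac12\tfrac{d}{dt}\Vert\epsilon_h^{(m)}\Vert_h^2 \le L\Vert\epsilon_h^{(m)}\Vert_h^2 + \Vert\tau_h^{(m)}\Vert\,\Vert\epsilon_h^{(m)}\Vert_h,
\end{equation*}
and hence, after dividing by $\Vert\epsilon_h^{(m)}\Vert_h$, the differential inequality $\tfrac{d}{dt}\Vert\epsilon_h^{(m)}\Vert_h\le L\Vert\epsilon_h^{(m)}\Vert_h + C_\tau^{(m)}h^{m+1}$. Gr\"onwall's inequality on $[0,T]$ then gives $\Vert\epsilon_h^{(m)}(t)\Vert\le C_\tau^{(m)}h^{m+1}(e^{LT}-1)/L$. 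Norm equivalence finally yields \cref{eq:convergence_rate} with $C^{(m)}$ depending on $T$, $L$ and $C_\tau^{(m)}$ but not on $h$.

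The main obstacle is the nonlinear stability step, in particular keeping the numerical solution inside the neighbourhood where the Lipschitz bound on $f$ holds. I would close this with a bootstrap argument. On the maximal interval where $\Vert\epsilon_h^{(m)}\Vert_{L^\infty}\le 1$, the above bound holds. An inverse inequality then gives $\Vert\epsilon_h^{(m)}\Vert_{L^\infty}\lesssim h^{-1/2}h^{m+1}$, which is small for $m\ge0$ and small $h$. By continuity the interval is therefore all of $[0,T]$.
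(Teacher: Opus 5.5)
The gap is in the nonlinear stability step. Your Taylor expansion about $u_{h,\ast}^{(m)}$ only accounts for the linear part $f'(u_{h,\ast}^{(m)})\,\epsilon$, writing $\epsilon=\epsilon_h^{(m)}$. The quadratic remainder, with nodal values $r_i=\tfrac12 f''(\theta_i)\epsilon_i^2$, is what destroys $h$-uniformity on an $L^\infty$-ball of fixed radius. After SBP, $\langle\underline{\underline{D}}\,\underline{r},\underline{\epsilon}\rangle_h$ equals boundary terms minus $\sum_i\omega_i(\underline{\underline{D}}\,\underline{\epsilon})_i r_i$. The latter is not a boundary term, because the LGL rule is inexact for this degree-$(3m-1)$ integrand; this is exactly the aliasing that split forms remove. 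Already for $m=1$ and Burgers' flux, on the reference element you get $\langle\underline{\underline{D}}\,\underline{r},\underline{\epsilon}\rangle_h=\tfrac13\left[\epsilon^3\right]_{-1}^{1}-\tfrac1{12}(\epsilon_1-\epsilon_0)^3$, and the last term has no sign. Summed over the $\mathcal{O}(h^{-1})$ elements it can be as large as $h^{-1}\|\epsilon\|_{L^\infty}\|\epsilon\|^2$. So under your threshold $\|\epsilon\|_{L^\infty}\le 1$ the one-sided constant is $L\sim h^{-1}$, and Gr\"onwall only gives a factor $e^{CT/h}$. The same kind of remainder arises when a nonlinear monotone flux is linearized at the interfaces, so the ``non-positive jump contribution'' also holds only up to such terms.

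The repair is a Zhang--Shu-type a priori assumption with an $h$-dependent threshold, e.g.\ $\|\epsilon\|_{L^\infty}\le h$. Under it, all these remainders are bounded by $C\|\epsilon\|^2$ with $C$ independent of $h$. Closing the bootstrap with the inverse inequality then requires $Ch^{m+1/2}<h$, i.e.\ $m\ge 1$ rather than $m\ge 0$. This is harmless, since LGL needs $m\ge1$ and the paper uses $m\ge 3$. You should also list the extra hypotheses you rely on (smooth $f$, monotone/upwind $f^{\ast}$, $\epsilon(0)=0$), none of which appear in the lemma.

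For comparison, the paper does not prove this lemma at all. It is imported as an assumed property of DGSEM-LGL; the analysis explicitly relies ``on the assumed convergence properties of the standard DGSEM-LGL method.'' As stated, the lemma contains only a consistency hypothesis, so a stability ingredient of the kind you add is genuinely necessary. Your choice of a \emph{one-sided} bound is the right one. The two-sided Lipschitz constant $\beta_{\max}$ that the paper later assumes for $\mathcal{P}_h^{(m)}$ would turn the lemma into a one-line Gr\"onwall argument. However, for an operator containing $\mathcal{J}_i^{-1}\underline{\underline{D}}$, such a constant grows like $h^{-1}$ for any nontrivial flux. SBP plus upwind dissipation instead yields an $h$-uniform one-sided constant (zero for linear advection), provided the nonlinear remainders are controlled as above.
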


\begin{figure}[ht]
\centering
\begin{tikzpicture}[node distance=2.6cm, every node/.style={font=\small}]

% ---------------- Nodes (top level m) ----------------
\node (um0) {$u^{(m)}_{h,\mathcal{R}}(\cdot,0)$};
\node (Run0) [right of=um0] {$\mathcal{R}_h\left(u^{(n)}_{h,\mathcal{R}}(\cdot,0)\right)$};
\node (umdt) [right of=Run0, xshift=1.6cm] {$u^{(m)}_{h,\mathcal{R}}(\cdot,\Delta t)$};
\node (Rumdt) [right of=umdt] {$\mathcal{R}_h\left(u^{(n)}_h(\cdot,\Delta t)\right)$};

\node (u_exact) [above of=um0,yshift=-1cm] {$u_0\in H^{m+1}(\Omega)$};

% ---------------- Nodes (bottom level n) ----------------
\node (un0) [below of=um0, yshift=-1.2cm] {$u^{(n)}_{h,\mathcal{R}}(\cdot,0)$};
\node (unRdt) [below of=umdt, yshift=-1.2cm] {$u^{(n)}_{h,\mathcal{R}}(\cdot,\Delta t)$};

% ---------------- Top arrows ----------------
\draw[->, thick,dashed] (Run0) -- node[above] {$\mathcal{P}_h^{(m)}$} (umdt);

% ---------------- Bottom evolution ----------------
\draw[->, thick] (un0) -- node[above, yshift=0.2cm] {$\mathcal{P}^{(n,m)}_{h,\mathcal{R}}$} (unRdt);

% ---------------- Vertical projections ----------------
\draw[->, thick,dash dot] (um0) -- node[left] {$\Pi_h$} (un0);
\draw[->, thick,dash dot] (umdt) -- node[right] {$\Pi_h$} (unRdt);
\draw[->, thick,dash dot] (u_exact) -- node[left] {$\Pi^{(m)}_h$} (um0);

% ---------------- Cross operator ----------------
%\draw[->, thick] (Run0) to[bend left=20] node[left] {} (unRdt);
\draw[->, thick] (Run0) to[bend left=-45] (unRdt);

% ---------------- Reverse reconstruction ----------------
\draw[->, thick] (un0) to[bend right=20] node[left] {$\mathcal{R}_h$} (Run0);
\draw[->, thick] (unRdt) to[bend right=20] node[left] {$\mathcal{R}_h$} (Rumdt);

%\node[right=of Rumdt] {$\dots$};
\node[right=of unRdt] {$\dots$};

\end{tikzpicture}
\label{fig:schematic_diagram}
\caption{Schematic diagram of auxiliary evolution, $u^{(m)}_{h,\mathcal{R}}$, the real $c\mathbb{P}_n\mathbb{P}_m$ evolution, $u^{(n)}_{h,\mathcal{R}}$ and the reconstructed solution, $\mathcal{R}_h\left( u^{(n)}_{h,\mathcal{R}}\right)$. The dash-dot line denotes the pre-processing. The solid and dash lines denotes the process of auxiliary and $c\mathbb{P}_n\mathbb{P}_m$ evolution, respectively.}
\end{figure} 

\subsection*{Evolution error of \(c\mathbb{P}_n\mathbb{P}_m\) scheme}

With the reconstructor in \cref{sec:reconstruction} at hand, an auxiliary scheme has to be introduced here as an ingredient of the proof:
\begin{equation}
\label{eq:du_R_mdt}
    \frac{d}{dt}u^{(m)}_{h,\mathcal{R}} = \mathcal{P}^{(m)}_h\left(  \mathcal{R}_h\left(\Pi_h\,u^{(m)}_{h,\mathcal{R}} \right)\right),
\end{equation}
where $u^{(m)}_{h,\mathcal{R}} \in \mathcal{V}^{(m)}_h$ is the solution of the auxiliary scheme and $\Pi_h:\mathcal{V}^{(m)}_h\to \mathcal{V}^{(n)}_h$ is the global projection operator from order $m$ to $n$. Considering that the $c\mathbb{P}_n\mathbb{P}_m$ scheme we derived in \cref{eq:projected_ho_time_derivative} is mathematically equivalent to the projected high-order time-derivative according to \cref{eq:projected_DGESM_0}, the solution of $c\mathbb{P}_n\mathbb{P}_m$, $u^{(n)}_{h,\mathcal{R}}$, is governed by:
\begin{equation}
\label{eq:du_R_ndt}
    \frac{d}{dt}u^{(n)}_{h,\mathcal{R}} = \mathcal{P}^{(n,m)}_{h,\mathcal{R}}\left( u^{(n)}_{h,\mathcal{R}}\right) = \Pi_h\mathcal{P}^{(m)}_h\left(  \mathcal{R}_h\left(\Pi_h\,u^{(m)}_{h,\mathcal{R}} \right)\right),
\end{equation}
where $\mathcal{P}^{(n,m)}_{h,\mathcal{R}}$ is the semi-discrete operator of the $c\mathbb{P}_n\mathbb{P}_m$ method and is locally defined by \cref{eq:cPnPm}.
Then we can obtain the following:
\begin{lemma}
    Let $u^{(n)}_{h,\mathcal{R}}(\cdot,0) = \Pi_hu^{(m)}_{h,\mathcal{R}}(\cdot,0) =\Pi_hu^{(m)}_h(\cdot,0) \in \mathcal{V}^{(n)}_h$. Then the solutions of \cref{eq:du_R_mdt} and \cref{eq:du_R_ndt} satisfy:
    \begin{equation}
        u^{(n)}_{h,\mathcal{R}}(\cdot,t) = \Pi_hu^{(m)}_{h,\mathcal{R}}(\cdot,t),\; \forall t \in [0,T].
    \end{equation}
\end{lemma}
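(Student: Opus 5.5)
The plan is to compare both sides through a single ODE for the low-order variable and invoke uniqueness. Define $z(t):=\Pi_h u^{(m)}_{h,\mathcal{R}}(\cdot,t)\in\mathcal{V}^{(n)}_h$. Since $\Pi_h:\mathcal{V}^{(m)}_h\to\mathcal{V}^{(n)}_h$ is a fixed, time-independent linear operator (truncation of modal coefficients), it commutes with $d/dt$. Applying $\Pi_h$ to \cref{eq:du_R_mdt} then gives
\begin{equation*}
\frac{d}{dt}z = \Pi_h\mathcal{P}^{(m)}_h\left(\mathcal{R}_h\left(z\right)\right).
\end{equation*}
The key observation is that the right-hand side depends on $u^{(m)}_{h,\mathcal{R}}$ only through $z$. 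Hence $z$ satisfies a closed ODE on $\mathcal{V}^{(n)}_h$ with vector field $F(v):=\Pi_h\mathcal{P}^{(m)}_h(\mathcal{R}_h(v))$.

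Next I will identify $F$ with $\mathcal{P}^{(n,m)}_{h,\mathcal{R}}$. This is the content of \cref{prop:projected_ho_time_derivative}, \cref{prop:projected_surface_terms} and \cref{prop:projected_volume_term}. Element by element, applying $\underline{\underline{\Pi}}$ to the $m^{th}$-order DGSEM-LGL residual \cref{eq:projected_DGESM_0}, evaluated at the reconstructed state $\underline{u}^{\prime(m)}_{\mathcal{R},i}$ (with neighbouring reconstructions entering the numerical fluxes), reproduces exactly the right-hand side of \cref{eq:cPnPm}. This identity is algebraic, not approximate, so $F(v)=\mathcal{P}^{(n,m)}_{h,\mathcal{R}}(v)$ for every $v\in\mathcal{V}^{(n)}_h$. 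It follows that $u^{(n)}_{h,\mathcal{R}}$ in \cref{eq:du_R_ndt} and $z$ solve the same ODE $\dot y=F(y)$. They also share the initial datum, by the hypothesis $u^{(n)}_{h,\mathcal{R}}(\cdot,0)=\Pi_h u^{(m)}_{h,\mathcal{R}}(\cdot,0)$.

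The conclusion then follows from Picard--Lindelöf uniqueness on the finite-dimensional space $\mathcal{V}^{(n)}_h$, which requires $F$ to be locally Lipschitz. The reconstruction $\mathcal{R}_h$ is linear and bounded on a finite-dimensional space, consistent with \cref{eq:global_stability}. The DGSEM-LGL operator $\mathcal{P}^{(m)}_h$ is built from nodal evaluations of $f$ and a numerical flux $f^\ast$, so it is locally Lipschitz provided $f\in C^1$ and $f^\ast$ is locally Lipschitz, which are standard assumptions I will state explicitly. Composing with the bounded linear $\Pi_h$ preserves the local Lipschitz property. Uniqueness then gives $z\equiv u^{(n)}_{h,\mathcal{R}}$ on the common interval of existence, and I will take this to be $[0,T]$, as implicitly assumed for both schemes.

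The main obstacle is justifying that \cref{eq:du_R_ndt} really is the scheme \cref{eq:cPnPm}, that is, that the element-wise identity of \cref{prop:projected_ho_time_derivative} holds with reconstructed states and numerical fluxes substituted. Here I will check two things. First, the interface fluxes in $\delta f^{\ast(n,m)}$ use the same neighbour reconstructions as $\mathcal{P}^{(m)}_h(\mathcal{R}_h(\cdot))$. Second, the nodal-to-modal projection $\underline{\underline{\Pi}}$ coincides with the function-space $\Pi_h$. Once this is in place, the rest is a standard uniqueness argument.
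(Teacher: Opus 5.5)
Your proposal is correct and follows the route the paper takes implicitly. The paper gives no separate proof; it states the lemma right after asserting, via \cref{prop:projected_ho_time_derivative}, that the $c\mathbb{P}_n\mathbb{P}_m$ operator equals $\Pi_h\mathcal{P}^{(m)}_h\circ\mathcal{R}_h$, so projecting \cref{eq:du_R_mdt} gives the same closed ODE with the same initial datum. Your explicit Picard--Lindel\"{o}f step, with local Lipschitz continuity of $\mathcal{P}^{(m)}_h$ (which the paper only assumes later, in \cref{eq:def_beta_max}), makes rigorous the uniqueness argument the paper leaves unstated.
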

To clarify the notation, a schematic diagram of different methods is given in \cref{fig:schematic_diagram}. 

Instead of directly dealing with the operator $\mathcal{P}^{(n,m)}_{\mathcal{R}}$, we first prove that the auxiliary solution achieves the optimal convergence order of $m+1$ using the lemmas introduced earlier and present the following theory:
\begin{theorem}[Convergence of the auxiliary solution]
\label{thm:convergence_of_the auxiliary_solution}
 Assume that the solution is smooth enough:
    \begin{equation}
        C_u = \sup_{t\in[0,T]}\vert u(\cdot,t) \vert_{H^{m+1}(\Omega)}< \infty, 
    \end{equation}
    and $\mathcal{P}_h^{(m)}$ is locally Lipschitz continuous:
\begin{equation}
\label{eq:def_beta_max}
    \left\| \mathcal{P}^{(m)}_h\left(v\right) - \mathcal{P}_h^{(m)}\left(w\right)\right\| \leq \beta_{\max}\| v-w\|,\;\forall v,w\in\mathcal{V}^{(m)}_h,
\end{equation}
    given the initial condition $u^{(m)}_{h,\mathcal{R}}(\cdot,0) =u^{(m)}_h(\cdot,0)=\Pi^{(m)}u_0$, the solution of the auxiliary scheme \cref{eq:du_R_mdt} achieves an optimal order of $m+1$:
    \begin{equation}
        \left\|u - u^{(m)}_{h,\mathcal{R}}\right\| \leq Ch^{m+1},
    \end{equation}
    where $C$ is a constant independent of $h$. More precisely,
    \begin{equation}
        C = C_{\Pi}C_u + C^{(m)}+C^{(m)}_a,
    \end{equation}
    and two estimates of $C^{(m)}_a$ are:
    \begin{enumerate}
        \item \textbf{Estimate 1:}
            \begin{equation}
            \label{eq:Cma1}
                C_{a,1}^{(m)} = \frac{C_{\mathcal{R}_c} C_u}{C_{\mathcal{R}_s}}\left(e^{ \beta_{\max}C_{\mathcal{R}_s}t}-1\right);
            \end{equation}

        \item \textbf{Estimate 2:}
            \begin{equation}
            \label{eq:Cma2}
                C_{a,2}^{(m)} = \frac{\beta_{\max}C_{\mathcal{R}_c} C_u}{\left( 2\beta_{\max}C_{\mathcal{R}_s}+1\right)^{1/2}}\left( e^{\left( 2\beta_{\max}C_{\mathcal{R}_s}+1 \right)t} -1\right)^{1/2}.
        \end{equation}
    \end{enumerate}
    Here $C_{\Pi}$ is the constant from \cref{lemma:projection_error}, $C^{(m)}$ from \cref{lem:DGSEM_convergence} and $C_{\mathcal{R}_c}, C_{\mathcal{R}_s}$ from \cref{lemma:projector_property_global}.
\end{theorem}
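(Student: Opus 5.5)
We split the error with the triangle inequality into three pieces,
\begin{displaymath}
\left\|u-u^{(m)}_{h,\mathcal{R}}\right\| \le \left\|u-u^{(m)}_{h,\ast}\right\| + \left\|u^{(m)}_{h,\ast}-u^{(m)}_{h}\right\| + \left\|u^{(m)}_{h}-u^{(m)}_{h,\mathcal{R}}\right\|,
\end{displaymath}
and match them to the three constants in the statement. The first term is bounded by \cref{lemma:projection_error} together with the definition of $C_u$, giving $C_\Pi C_u h^{m+1}$. The second is the evolution error of standard DGSEM-LGL and is bounded by \cref{lem:DGSEM_convergence}, giving $C^{(m)}h^{m+1}$. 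All the work is in the third term. Set $e(t):=u^{(m)}_{h,\mathcal{R}}-u^{(m)}_{h}$; it vanishes at $t=0$ by the shared initial condition.

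Subtract the two semi-discrete systems \cref{eq:du_R_mdt} and $\frac{d}{dt}u^{(m)}_h=\mathcal{P}^{(m)}_h(u^{(m)}_h)$. The Lipschitz assumption \cref{eq:def_beta_max} then gives
\begin{displaymath}
\left\|\tfrac{d}{dt}e\right\| \le \beta_{\max}\left\|\mathcal{R}_h\left(\Pi_h u^{(m)}_{h,\mathcal{R}}\right)-u^{(m)}_h\right\|.
\end{displaymath}
Insert $\pm\mathcal{R}_h(\Pi_h u^{(m)}_h)$ inside the norm. The stability part of \cref{lemma:projector_property_global}, with $\|\Pi_h\|\le1$, bounds one piece by $C_{\mathcal{R}_s}\|e\|$. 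The other piece, $\|\mathcal{R}_h(\Pi_h u^{(m)}_h)-u^{(m)}_h\|$, is a consistency-type residual. We want it bounded by $C_{\mathcal{R}_c}C_u h^{m+1}$ via the global consistency estimate.

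Strictly, that estimate applies to $\Pi_h^{(m)}v$ with $v$ in $H^{m+1}$, not to the discrete $u^{(m)}_h$. We would either apply it to $u^{(m)}_{h,\ast}$ and absorb the discrepancy, which is $O(h^{m+1})$ by \cref{lem:DGSEM_convergence} and stability, or, as the form of the constants suggests, simply accept the bound with constant $C_{\mathcal{R}_c}C_u$ for the DGSEM solution. This gives the scalar inequality
\begin{displaymath}
\left\|\tfrac{d}{dt}e\right\| \le \beta_{\max}C_{\mathcal{R}_s}\|e\| + \beta_{\max}C_{\mathcal{R}_c}C_u h^{m+1}.
\end{displaymath}

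For Estimate 1, use $\frac{d}{dt}\|e\|\le\|\frac{d}{dt}e\|$ and the linear Gronwall lemma with $e(0)=0$:
\begin{displaymath}
\|e(t)\| \le \frac{C_{\mathcal{R}_c}C_u}{C_{\mathcal{R}_s}}\left(e^{\beta_{\max}C_{\mathcal{R}_s}t}-1\right)h^{m+1},
\end{displaymath}
which is \cref{eq:Cma1}. For Estimate 2, work with $\frac{d}{dt}\|e\|^2=2(e,\dot e)$ instead. Bound $2(e,\dot e)\le 2\beta_{\max}C_{\mathcal{R}_s}\|e\|^2+2\beta_{\max}C_{\mathcal{R}_c}C_uh^{m+1}\|e\|$. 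Apply Young's inequality as $2ab\le a^2+b^2$ to the last term, so that
\begin{displaymath}
\tfrac{d}{dt}\|e\|^2 \le (2\beta_{\max}C_{\mathcal{R}_s}+1)\|e\|^2+\beta_{\max}^2C_{\mathcal{R}_c}^2C_u^2h^{2m+2}.
\end{displaymath}
Gronwall and a square root then yield exactly \cref{eq:Cma2}. Either estimate gives $C^{(m)}_a$, and summing the three pieces gives the stated $C$.

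The main obstacle is the consistency step: the reconstruction estimate is formulated for projections of a smooth function, while the argument needs it for the discrete solution $u^{(m)}_h$. A secondary point is that the Lipschitz constant must be uniform in $h$. We would state both as implicit assumptions inherited from the framework, in line with the paper's stated intention to give a streamlined proof.
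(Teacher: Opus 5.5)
Your proof is correct and follows the paper's argument essentially step for step. It uses the same three-term splitting, the same $\pm\mathcal{R}_h(\Pi_h u^{(m)}_h)$ insertion combining the Lipschitz bound with reconstruction consistency and stability, and the same linear Gr\"onwall closure (Estimate 1) and Young-plus-Gr\"onwall closure (Estimate 2). The only cosmetic difference is that you obtain the scalar inequality for $\|e\|$ via $\frac{d}{dt}\|e\|\le\|\dot e\|$, whereas the paper first writes the energy identity for $\|\epsilon^{(m)}_{h,\mathcal{R}}\|^2$ and then divides by the norm. The consistency caveat you flag is shared by the paper, which applies the global consistency estimate directly to the discrete solution $u^{(m)}_h$ (and uses $\|\Pi_h\|\le 1$) without comment. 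Your alternative of routing through $u^{(m)}_{h,\ast}$ would close that gap, at the cost of replacing $C_{\mathcal{R}_c}C_u$ by $C_{\mathcal{R}_c}C_u+(1+C_{\mathcal{R}_s})C^{(m)}$ in the constants.
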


\begin{proof}
    First, by introducing the error $\epsilon^{(m)}_{h,\mathcal{R}}:=u^{(m)}_h-u^{(m)}_{h,\mathcal{R}}$
    we can decompose the total error into three parts:
    \begin{multline}
        \left\| u - u^{(m)}_{h,\mathcal{R}}\right\| 
        \leq 
        \left\| u - u^{(m)}_{h,\ast} \right\| 
        + 
        \left\| u^{(m)}_{h,\ast} - u^{(m)}_h \right\|
        + 
        \left\| u_h^{(m)} - u^{(m)}_{h,\mathcal{R}} \right\| \\
        =\Vert \epsilon_{\Pi}\Vert +\left\| \epsilon^{(m)}_h\right\| + \left\|\epsilon^{(m)}_{h,\mathcal{R}}\right\|,
    \end{multline}
    where the first two errors are bounded using \cref{lemma:projection_error} and \cref{lem:DGSEM_convergence}.
    As for the last term, its evolution is governed by:
    \begin{equation}
    \frac{d}{dt}\epsilon^{(m)}_{h,\mathcal{R}} = \mathcal{P}^{(m)}_h\left( u^{(m)} \right)-\mathcal{P}^{(m)}_h\left( \mathcal{R}_h\left(\Pi_hu^{(m)}_{h,\mathcal{R}}\right)\right).
    \end{equation}

    Taking the inner product with $\epsilon^{(m)}_{h,\mathcal{R}}$, we obtain:
    \begin{equation}
    \label{eq:1_2_dedt^2}
        \frac{1}{2}\frac{d}{dt}\left\| \epsilon^{(m)}_{h,\mathcal{R}} \right\|^2 = \Big\langle\mathcal{P}^{(m)}_h\left( u^{(m)} \right)- \mathcal{P}^{(m)}_h\left( \mathcal{R}_h\left(\Pi_h u^{(m)}_{h,\mathcal{R}}\right)\right),\epsilon^{(m)}_{h,\mathcal{R}}\Big\rangle.
    \end{equation}

    From \cref{eq:def_beta_max}, we have: 
    \begin{equation}
        \left\| \mathcal{P}^{(m)}_h\left( u^{(m)}_h \right)- \mathcal{P}^{(m)}_h\left( \mathcal{R}_h\left(\Pi_hu^{(m)}_{h,\mathcal{R}}\right)\right)  \right\| \leq \beta_{\max} \left\| \epsilon^{\prime(m)}_{h,\mathcal{R}}  \right\|
    \end{equation}
    where $\epsilon^{\prime(m)}_{h,\mathcal{R}}:=u^{(m)}_h - \mathcal{R}_h\left(\Pi_hu^{(m)}_{h,\mathcal{R}}\right)$. 
    Thus,
    \begin{equation}
    \label{eq:norm_innner_product_P_e}
        \Bigg\vert \Big\langle\mathcal{P}^{(m)}_h\left(u_h^{(m)} \right)
        - 
        \mathcal{P}^{(m)}_h\left( \mathcal{R}\left(\Pi_hu^{(m)}_{h,\mathcal{R}}\right)\right),\epsilon^{(m)}_{h,\mathcal{R}}\Big\rangle \Bigg\vert \leq \beta_{\max}\left\|\epsilon^{\prime(m)}_{h,\mathcal{R}} \right\| \cdot
        \left\| \epsilon^{(m)}_{h,\mathcal{R}} \right\|.
    \end{equation}
    Noting that $\epsilon^{\prime(m)}_{h,\mathcal{R}}$ can be split into two parts:
    \begin{multline}
    \label{eq:norm_e_prime_R^m}         \left\|\epsilon^{\prime(m)}_{h,\mathcal{R}}\right\| 
    = 
    \left\|u^{(m)}_h - \mathcal{R}_h\left(\Pi_h\,u^{(m)}_{h,\mathcal{R}}\right) \right\|
    \\=
    \left\|\left(u^{(m)}_h - \mathcal{R}_h\left(\Pi_hu^{(m)}_h\right)\right)+\left(\mathcal{R}_h\left(\Pi_hu_h^{(m)}\right)-\mathcal{R}\left(\Pi_hu^{(m)}_{\mathcal{R}}\right)\right)\right\| \\
        \leq \left\|\epsilon^{(m)}_{h,\mathcal{R}\Pi}\left( u^{(m)}_h\right)\right\| + \left\|\mathcal{R}_h\left(\Pi_hu_h^{(m)}\right)-\mathcal{R}_h\left(\Pi_hu^{(m)}_{h,\mathcal{R}}\right) \right\|,
    \end{multline}
    where $\epsilon^{(m)}_{h,\mathcal{R}\Pi}(\cdot):=(\cdot)-\mathcal{R}_h\left(\Pi_h(\cdot)\right)$ is the projection-reconstruction error. Using \cref{lemma:projector_property_global}, we obtain the following:
    \begin{equation}
    \label{eq:C_Rc_Cu}
        \left\| \epsilon^{(m)}_{h,\mathcal{R}\Pi}(u_h^{(m)}) \right\| \leq C_{\mathcal{R}_c} C_uh^{m+1},
    \end{equation}
    and 
    \begin{equation}
        \label{eq:C_Rs}
        \left\|\mathcal{R}_h\left(\Pi_hu^{(m)}_h\right)-\mathcal{R}_h\left(\Pi_hu^{(m)}_{h,\mathcal{R}}\right) \right\| \leq C_{\mathcal{R}_s}\left\| \epsilon^{(m)}_{h,\mathcal{R}}\right\|.
    \end{equation}

    Combining \cref{eq:1_2_dedt^2}, \cref{eq:norm_innner_product_P_e}, \cref{eq:norm_e_prime_R^m}, \cref{eq:C_Rc_Cu} and \cref{eq:C_Rs} yields:
    \begin{equation}
    \label{eq:dehRdt^2}
        \frac{d}{dt}\left\| \epsilon^{(m)}_{h,\mathcal{R}} \right\|^2 \leq 2\beta_{\max}C_{\mathcal{R}_s}\left\| \epsilon^{(m)}_{h,\mathcal{R}}\right\|^2 + 2\beta_{\max}C_{\mathcal{R}_c} C_uh^{m+1} \cdot \left\| \epsilon^{(m)}_{h,\mathcal{R}}\right\|.
    \end{equation}

    Since $\frac{d}{dt}(\cdot)^2 = 2(\cdot)\frac{d}{dt}(\cdot)$, \cref{eq:dehRdt^2} can be written as:
    \begin{equation}
    \label{eq:dehRdt}
            \frac{d}{dt}\left\| \epsilon^{(m)}_{h,\mathcal{R}} \right\| \leq \beta_{\max}C_{\mathcal{R}_s}\left\| \epsilon^{(m)}_{h,\mathcal{R}}\right\| + \beta_{\max}C_{\mathcal{R}_c} C_uh^{m+1}.
    \end{equation}
    
    Considering that an exact numerical initial condition of $m^{th}$-order is given, i.e., $\left\|\epsilon^{(m)}_{\mathcal{R}}(\cdot,0)\right\|=0$, applying Grönwall's inequality to \cref{eq:dehRdt} yields the the formulation of $C^{(m)}_{a,1}$ in \cref{eq:Cma1}.

    Alternatively, using Young’s inequality leads to:
    \begin{equation}
        2\beta_{\max}C_{\mathcal{R}_c} C_uh^{m+1} \cdot \left\| \epsilon^{(m)}_{h,\mathcal{R}}\right\| \leq \beta_{\max}^2C_{\mathcal{R}_c}^2 C_u^2h^{2m+2} + \left\| \epsilon^{(m)}_{h,\mathcal{R}}\right\|^2,
    \end{equation}
    and we obtain:
    \begin{equation}
        \label{eq:dedt^2_R}
        \frac{d}{dt}\left\| \epsilon^{(m)}_{h,\mathcal{R}} \right\|^2 \leq (2\beta_{\max}C_{\mathcal{R}_s}+1)\left\| \epsilon^{(m)}_{h,\mathcal{R}}\right\|^2 + \beta_{\max}^2C_{\mathcal{R}_c}^2 C_u^2h^{2m+2}.
    \end{equation}
    Similarly, applying $\left\|\epsilon^{(m)}_{\mathcal{R}}(\cdot,0)\right\|=0$ and Grönwall's inequality and taking the square root leads to the formulation of $C^{(m)}_{a,2}$ in \cref{eq:Cma2}.
\end{proof}

\begin{remark}
    Assumption \cref{eq:def_beta_max} should be interpreted as a local regularity condition on the DGSEM-LGL time-derivative operator $\mathcal{P}_h^{(m)}$. In particular, the bound is expected to hold only in regimes where the exact solution and the
    corresponding discrete approximation remain sufficiently smooth. For smooth solutions, the LGL differentiation matrices, quadrature weights, metric terms, and discrete residuals define a bounded finite-dimensional operator on the relevant solution manifold. However, near discontinuities, shocks, or strongly under-resolved gradients, such a uniform Lipschitz constant may fail to exist or may become very large. Thus the analysis based on \cref{eq:def_beta_max} is restricted to the smooth-solution regime.
\end{remark}

\begin{remark}
    The constant $\beta_{\max}$ in \cref{eq:def_beta_max} generally depends on $\mathcal{P}^{(m)}_h$, which is parameterized by mesh size $h$ and the polynomial order $m$. It can be approximated by:
    \begin{equation}
        \beta_{\max} \approx \sup_{v} \left\| \mathcal{J}_{\mathcal{P}^{(m)}_h }\left( v \right)\right\|,\qquad\mathcal{J}_{\mathcal{P}^{(m)}_h }\left( v \right):=\frac{\partial \mathcal{P}^{(m)}_h(v)}{\partial v},
    \end{equation}
    and it measures how strongly the spatial discretization amplifies perturbations in the discrete state. 
\end{remark}

Based on the convergence of the auxiliary solution in \cref{thm:convergence_of_the auxiliary_solution}, it can be easily proved that the reconstructed solution of the $c\mathbb{P}_n\mathbb{P}_m$ scheme also achieves the convergence order of $m+1$:
\begin{theorem}[Convergence of the $c\mathbb{P}_n\mathbb{P}_m$ solution]
\label{thm:convergence_of_the_cPnPm}
    Using the assumptions in \Cref{thm:convergence_of_the auxiliary_solution}, given the initial condition $u^{(n)}_{h,\mathcal{R}}(\cdot,0) = \Pi_hu^{(m)}_{h,\mathcal{R}}(\cdot,0)=\Pi_hu_h^{(m)}(\cdot,0)$, the reconstructed solution from $c\mathbb{P}_n\mathbb{P}_m$ scheme \cref{eq:du_R_ndt}, i.e.,$\mathcal{R}_h\left( u^{(n)}_{h,\mathcal{R}} \right)$, achieves the $m+1$ convergence order:
    \begin{equation}
        \left\| u - \mathcal{R}_h\left( u^{(n)}_{h,\mathcal{R}} \right)\right\|\leq Ch^{m+1},
    \end{equation}
    where $C$ is a constant independent of $h$. More precisely,
    \begin{equation}
        C = C_{\Pi}C_u + C^{(m)}+C^{(m)}_b,
    \end{equation}
    where $C^{(m)}_b=C_{\mathcal{R}_c}C_u+C_{\mathcal{R}_s}C_a^{(m)}$.
    
    Here $C_{\Pi}$ is the constant from \cref{lemma:projection_error}, $C_{(m)}$ from \cref{lem:DGSEM_convergence}, $C_{\mathcal{R}_c},C_{\mathcal{R}_s}$ from \cref{lemma:projector_property_global} and $C_a^{(m)}$ from \cref{thm:convergence_of_the auxiliary_solution}.
\end{theorem}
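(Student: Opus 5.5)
The plan is a triangle inequality that routes through the DGSEM-LGL solution $u^{(m)}_h$ and then through the auxiliary solution $u^{(m)}_{h,\mathcal{R}}$. The link between the two schemes is the preceding lemma: under the stated initial condition, $u^{(n)}_{h,\mathcal{R}}(\cdot,t)=\Pi_h u^{(m)}_{h,\mathcal{R}}(\cdot,t)$ for all $t\in[0,T]$. Hence $\mathcal{R}_h\bigl(u^{(n)}_{h,\mathcal{R}}\bigr)=\mathcal{R}_h\bigl(\Pi_h u^{(m)}_{h,\mathcal{R}}\bigr)$, and every error for the real $c\mathbb{P}_n\mathbb{P}_m$ solution becomes a statement about the auxiliary evolution, which \cref{thm:convergence_of_the auxiliary_solution} already controls. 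I would split
\begin{equation*}
\left\| u - \mathcal{R}_h\left( u^{(n)}_{h,\mathcal{R}} \right)\right\|
\leq \Vert \epsilon_{\Pi}\Vert + \left\|\epsilon^{(m)}_h\right\| + \left\| u^{(m)}_h - \mathcal{R}_h\left(\Pi_h u^{(m)}_{h,\mathcal{R}}\right)\right\|.
\end{equation*}
The first two terms are bounded by \cref{lemma:projection_error} (giving $C_\Pi C_u h^{m+1}$ after taking the supremum in time) and by \cref{lem:DGSEM_convergence} (giving $C^{(m)}h^{m+1}$).

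The third term is exactly $\|\epsilon^{\prime(m)}_{h,\mathcal{R}}\|$ from the proof of \cref{thm:convergence_of_the auxiliary_solution}. I would reuse the splitting \cref{eq:norm_e_prime_R^m}: the projection-reconstruction error of $u^{(m)}_h$ plus the stability difference $\mathcal{R}_h(\Pi_h u^{(m)}_h)-\mathcal{R}_h(\Pi_h u^{(m)}_{h,\mathcal{R}})$. The stability difference is bounded via \cref{eq:C_Rs} by $C_{\mathcal{R}_s}\|\epsilon^{(m)}_{h,\mathcal{R}}\|$, and the Grönwall bound from the previous theorem gives $\|\epsilon^{(m)}_{h,\mathcal{R}}\|\le C^{(m)}_a h^{m+1}$ with either estimate $C^{(m)}_{a,1}$ or $C^{(m)}_{a,2}$. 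The consistency part is bounded by \cref{eq:C_Rc_Cu} by $C_{\mathcal{R}_c}C_u h^{m+1}$. Adding these gives $C^{(m)}_b=C_{\mathcal{R}_c}C_u+C_{\mathcal{R}_s}C^{(m)}_a$, and summing all contributions produces the stated constant.

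The main obstacle is justifying \cref{eq:C_Rc_Cu}. Global consistency in \cref{lemma:projector_property_global} is stated for the pair $(\Pi_h^{(m)}v,\Pi_h^{(n)}v)$ of an $H^{m+1}$ function. Here, however, it is applied to the discrete DGSEM solution $u^{(m)}_h$, which is not a projection of a smooth function. I would handle this with one further triangle inequality through $u^{(m)}_{h,\ast}$. Write
\begin{equation*}
u^{(m)}_h-\mathcal{R}_h(\Pi_h u^{(m)}_h) = \left(u^{(m)}_{h,\ast}-\mathcal{R}_h(\Pi_h u^{(m)}_{h,\ast})\right) + \epsilon^{(m)}_h + \left(\mathcal{R}_h(\Pi_h u^{(m)}_{h,\ast})-\mathcal{R}_h(\Pi_h u^{(m)}_h)\right).
\end{equation*}
The first bracket is bounded by consistency, the middle term by \cref{lem:DGSEM_convergence}, and the last bracket by stability together with $\|\Pi_h\|\le 1$. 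This only alters the constant by an additional $(1+C_{\mathcal{R}_s})C^{(m)}$, which can be absorbed into $C^{(m)}_b$. If the constant is to match the statement exactly, I would instead follow the convention of the previous proof and apply \cref{eq:C_Rc_Cu} as already established there.
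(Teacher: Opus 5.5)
Your proposal is correct and follows the paper's proof. It uses the same split into $\|\epsilon_\Pi\|+\|\epsilon_h^{(m)}\|+\|\epsilon^{\prime(m)}_{h,\mathcal{R}}\|$, bounds the last term through \cref{eq:norm_e_prime_R^m}, \cref{eq:C_Rc_Cu}, \cref{eq:C_Rs} and the Gr\"onwall constant $C_a^{(m)}$, and relies on the identification $\mathcal{R}_h(u^{(n)}_{h,\mathcal{R}})=\mathcal{R}_h(\Pi_h u^{(m)}_{h,\mathcal{R}})$, which the paper uses only implicitly.

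Your concern about \cref{eq:C_Rc_Cu} is justified: the paper applies the consistency estimate to the DGSEM solution $u^{(m)}_h$ rather than to $\Pi^{(m)}_h u$, and your detour through $u^{(m)}_{h,\ast}$ correctly repairs this. Note, however, that the extra $(1+C_{\mathcal{R}_s})C^{(m)}$ cannot be absorbed into $C^{(m)}_b$ as literally defined, and the same correction also enters the forcing term of the Gr\"onwall argument that produces $C^{(m)}_a$, so the stated constant holds only under the paper's convention.
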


\begin{proof}
    Following a similar approach as before, the total error can be split:
    \begin{equation}
        \left\| u - \mathcal{R}_h\left( u^{(n)}_{h,\mathcal{R}} \right)\right\| \leq \left\| \epsilon_{\Pi}\right\| +\left\| \epsilon^{(m)}_h\right\| + \left\|\epsilon^{\prime(m)}_{h,\mathcal{R}}\right\|, 
    \end{equation}
    where $\epsilon^{\prime(m)}_{h,\mathcal{R}}:=u_h^{(m)} - \mathcal{R}_h\left(\Pi_hu^{(m)}_{h,\mathcal{R}}\right)$. Combining \cref{eq:norm_e_prime_R^m}, \cref{eq:C_Rc_Cu}, \cref{eq:C_Rs} and the estimates of $C_{a}^{(m)}$ completes the proof.
\end{proof}

\begin{corollary}
\label{corollary:two_error_estimates}
    The difference between the $L^2$-norm error of the $m^{th}$-order solution from standard DGSEM-LGL and the reconstructed solution from $c\mathbb{P}_n\mathbb{P}_m$ is bounded by:
    \begin{equation}
          \bigg\vert\left\|u- u^{(m)}_{h}\right\|
          -
          \left\|u - \mathcal{R}_h\left(u^{(n)}_{h,\mathcal{R}}\right)\right\|\bigg\vert
          \leq \left\|u^{(m)}_h - \mathcal{R}_h\left(u^{(n)}_{h,\mathcal{R}}\right)\right\|
        \leq
        C_b^{(m)}h^{m+1},
    \end{equation}
    where the two estimates of $C_b$ are:
    \begin{enumerate}
        \item \textbf{Estimate 1:}
            \begin{equation}
            \label{eq:Cmb1}
                C_{b,1}^{(m)} = C_{\mathcal{R}_c} C_ue^{ \beta_{\max}C_{\mathcal{R}_s}t};
            \end{equation}

        \item \textbf{Estimate 2:}
            \begin{equation}
            \label{eq:Cmb2}
                C_{b,2}^{(m)} = C_{\mathcal{R}_c} C_u\left(\frac{\beta_{\max}C_{\mathcal{R}_s}}{\left( 2\beta_{\max}C_{\mathcal{R}_s}+1\right)^{1/2}}\left( e^{\left( 2\beta_{\max}C_{\mathcal{R}_s}+1 \right)t} -1\right)^{1/2}+1\right).
        \end{equation}
    \end{enumerate}
    
\end{corollary}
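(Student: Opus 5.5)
The plan is to handle the two inequalities separately. The first is just the reverse triangle inequality in $L^2(\Omega)$, applied to $u-u^{(m)}_h$ and $u-\mathcal{R}_h(u^{(n)}_{h,\mathcal{R}})$, whose difference is $u^{(m)}_h-\mathcal{R}_h(u^{(n)}_{h,\mathcal{R}})$. For the second, the key step is the lemma stated before \cref{thm:convergence_of_the auxiliary_solution}. It gives $u^{(n)}_{h,\mathcal{R}}=\Pi_h u^{(m)}_{h,\mathcal{R}}$ for all $t$, so
\begin{displaymath}
u^{(m)}_h-\mathcal{R}_h\left(u^{(n)}_{h,\mathcal{R}}\right)=u^{(m)}_h-\mathcal{R}_h\left(\Pi_h u^{(m)}_{h,\mathcal{R}}\right)=\epsilon^{\prime(m)}_{h,\mathcal{R}}.
\end{displaymath}
This is exactly the quantity already split in \cref{eq:norm_e_prime_R^m}.

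Next I would use \cref{eq:norm_e_prime_R^m} together with \cref{eq:C_Rc_Cu} and \cref{eq:C_Rs}. This gives
\begin{displaymath}
\left\|\epsilon^{\prime(m)}_{h,\mathcal{R}}\right\|\leq C_{\mathcal{R}_c}C_u h^{m+1}+C_{\mathcal{R}_s}\left\|\epsilon^{(m)}_{h,\mathcal{R}}\right\|.
\end{displaymath}
The remaining task is to insert the two Grönwall bounds for $\|\epsilon^{(m)}_{h,\mathcal{R}}\|$ obtained in the proof of \cref{thm:convergence_of_the auxiliary_solution}.

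For Estimate 2, I would insert $C^{(m)}_{a,2}h^{m+1}$ directly. This gives
\begin{displaymath}
C_{\mathcal{R}_c}C_u+C_{\mathcal{R}_s}C^{(m)}_{a,2}=C_{\mathcal{R}_c}C_u\left(1+\frac{\beta_{\max}C_{\mathcal{R}_s}}{(2\beta_{\max}C_{\mathcal{R}_s}+1)^{1/2}}\left(e^{(2\beta_{\max}C_{\mathcal{R}_s}+1)t}-1\right)^{1/2}\right),
\end{displaymath}
which is \cref{eq:Cmb2}.

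For Estimate 1, I would first rederive the Grönwall solution of \cref{eq:dehRdt} with zero initial data:
\begin{displaymath}
\|\epsilon^{(m)}_{h,\mathcal{R}}\|\leq \frac{C_{\mathcal{R}_c}C_u}{C_{\mathcal{R}_s}}\left(e^{\beta_{\max}C_{\mathcal{R}_s}t}-1\right)h^{m+1}.
\end{displaymath}
The $\beta_{\max}$ factor in the forcing cancels against the rate, which gives the form of \cref{eq:Cma1}. Multiplying by $C_{\mathcal{R}_s}$ and adding $C_{\mathcal{R}_c}C_u$, the $-1$ cancels the $+1$, and the sum telescopes to $C_{\mathcal{R}_c}C_u e^{\beta_{\max}C_{\mathcal{R}_s}t}$. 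This is \cref{eq:Cmb1}.

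The main obstacle is bookkeeping rather than analysis. The step in \cref{eq:norm_e_prime_R^m} that uses \cref{eq:C_Rc_Cu} requires the consistency estimate of \cref{lemma:projector_property_global} to apply to $u^{(m)}_h$, the DGSEM solution, and not only to the projected exact solution. If that is questioned, I would instead compare with $u^{(m)}_{h,\ast}$ and absorb the difference into $C^{(m)}$. Here I follow the paper and take \cref{eq:C_Rc_Cu} as given.

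I would also double-check that \cref{eq:dehRdt} follows from \cref{eq:dehRdt^2} where $\|\epsilon^{(m)}_{h,\mathcal{R}}\|=0$. That happens at $t=0$, and there the standard regularization $\sqrt{\|\cdot\|^2+\delta}$ handles it. With that settled, the Estimate 1 constant is exact as stated.
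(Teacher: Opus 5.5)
Your proposal is correct and follows the paper's own argument: the reverse triangle inequality, the identification $u^{(m)}_h-\mathcal{R}_h(u^{(n)}_{h,\mathcal{R}})=\epsilon^{\prime(m)}_{h,\mathcal{R}}$ via $u^{(n)}_{h,\mathcal{R}}=\Pi_h u^{(m)}_{h,\mathcal{R}}$, and $C^{(m)}_b=C_{\mathcal{R}_c}C_u+C_{\mathcal{R}_s}C^{(m)}_a$ from the proof of \cref{thm:convergence_of_the_cPnPm}, with the two Gr\"{o}nwall constants \cref{eq:Cma1} and \cref{eq:Cma2} substituted exactly as you do. One caution on your fallback for the caveat you flag: splitting $u^{(m)}_h=u^{(m)}_{h,\ast}+\epsilon^{(m)}_h$ does not absorb cleanly into $C^{(m)}$. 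It adds a term of order $(1+C_{\mathcal{R}_s})C^{(m)}h^{m+1}$ to the forcing, and this propagates through Gr\"{o}nwall, so the stated constants genuinely rely on taking \cref{eq:C_Rc_Cu} as given, as the paper does.
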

It reveals that the reconstruction introduces another source of error. More specifically, there would be a shift between the $h$-convergence lines of $u^{(m)}$ and $\mathcal{R}_h\left(u^{(n)}_{h,\mathcal{R}}\right)$, which is related to the regularity of the solution ($C_u$), the property of the reconstructor $\mathcal{R}$ ($C_{\mathcal{R}_c}$ and $C_{\mathcal{R}_s}$) and the DGSEM operator ($\beta_{\max}$). This shift will be observed in the following numerical results.

\section{Numerical results} 
In this section, we study the performance of the $c\mathbb{P}_n\mathbb{P}_m$ method in a wide range of numerical cases. For the one dimensional cases, the linear advection, the viscous Burgers' equation, and the Euler equation are tested. Furthermore, we extend the method to two-dimensional cases, tested using inviscid isentropic vortex and finally decaying homogeneous isotropic turbulence (DHIT). All simulation results are obtained with an explicit five stage fourth order accurate low storage Runge-Kutta scheme (RK(5,4))\cite{carpenter_fourth-order_1994}. The setting of the time step satisfies the CFL condition for $n^{th}$-order simulation:
\begin{equation}
    \text{CFL} = \frac{\lambda_c \Delta t(2n+1)}{\Delta x},
\end{equation}
where $\lambda_c$ is the maximum global wave speed. The CFL number is set to about $0.25$ for all the convergence tests. 
The DGSEM-LGL and $c\mathbb{P}_n\mathbb{P}_m$ solvers are written using JAX \cite{jax2018github,wang_accelerating_2026}. All cases are run on a laptop computer with an Intel(R) Core(TM) Ultra 9 275HX CPU @ 2.70 GHz and RAM 32,0 GB.

\subsection{Linear advection}
Consider the one-dimensional linear advection problem:
\begin{equation}
\label{eq:linear_advection}
    \begin{split}
        u_t + u_x &= 0,\,x\in(0,1)\\
        u(x,0) &= \sin(2\pi x)+1.5,\,\forall x\in[0,1]\\
        u(0,t)&=u(1,t),\,\forall t\in[0,T].
    \end{split}
\end{equation}
We solve this problem in a series of refined meshes (from 8 to 256) using both standard DGSEM-LGL and $c\mathbb{P}_n\mathbb{P}_m$ methods, and their $L^2$-errors are compared in \cref{fig:convergence_rate_linear_advection}. The detailed experimental order of convergence (EOC) of $c\mathbb{P}_n\mathbb{P}_m$ methods are detailed in \cref{tab:convergence_order_linear_advection}, and all the methods ($n=1,m=3,4,5$) achieve the optimal convergence order of $m+1$. These results demonstrate that the correction procedure effectively enhances the accuracy of low-order DGSEM-LGL discretizations and reproduces the convergence characteristics of higher-order methods without directly increasing the underlying polynomial degree of the base scheme. In addition, from \cref{tab:convergence_order_linear_advection} it can be seen that there is a vertical shift between the convergence lines of the standard methods $\mathbb{P}_m$ and that of the methods $c\mathbb{P}_n\mathbb{P}_m$, which is a predictable outcome of \cref{corollary:two_error_estimates} and a numerical validation is given below.

\begin{figure}[tbhp]
\centering
    \includegraphics[trim=1cm 0cm 1cm 0cm, clip, width=0.7\textwidth]{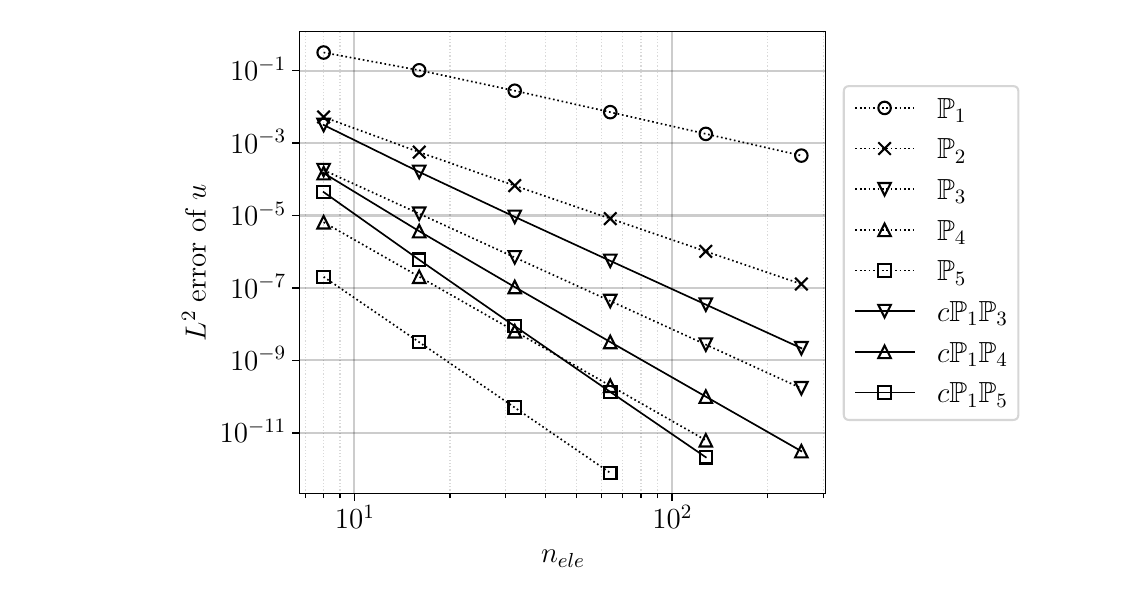}  \\
    \caption{The convergence of $L^2$-error of standard DGSEM-LGL methods and the $c\mathbb{P}_n\mathbb{P}_m$ methods for the one-dimensional linear advection problem.}
\label{fig:convergence_rate_linear_advection}
\end{figure}

\begin{table}[htbp]
{\footnotesize
  \caption{The EOC of $c\mathbb{P}_n\mathbb{P}_m$ for one-dimensional linear advection problem}  \label{tab:convergence_order_linear_advection}
\begin{center}
  \begin{tabular}{ccccccc} \hline
   $n_{ele}$ & $L^1$ & $\mathcal{O}_{L^1}$ & $L^2$ & $\mathcal{O}_{L^2}$ & $L^{\infty}$ & $\mathcal{O}_{L^{\infty}}$ \\ \hline
   $c\mathbb{P}_1\mathbb{P}_3$ &&&&&& \\
    8   & 2.63E-03 & -   & 3.15E-03   & - & 5.99E-03   & - \\ 
    16  & 1.32E-04 & 4.3 & 1.59E-04 & 4.3 & 4.01E-04 & 3.9 \\ 
    32  & 7.42E-06 & 4.1 & 9.17E-06 & 4.1 & 2.93E-05 & 3.8 \\
    64  & 4.35E-07 & 4.1 & 5.60E-07 & 4.0 &	1.95E-06 & 3.9 \\
    128 & 2.62E-08 & 4.0 & 3.48E-09	& 4.0 &	1.26E-07 & 4.0 \\
    256 & 1.61E-09 & 4.0 & 2.17E-09	& 4.0 &	7.96E-09 & 4.0 \\
    \\
    $c\mathbb{P}_1\mathbb{P}_4$ &&&&&& \\
    8   & 1.07E-04 & -   & 1.48E-04 & -   & 3.44E-04 & - \\
    16  & 2.87E-06 & 5.2 & 3.69E-06 & 5.3 & 8.90E-06 & 5.3 \\ 
    32  & 8.46E-08 & 5.1 & 1.06E-07 & 5.1 & 2.41E-07 & 5.2 \\ 
    64  & 2.58E-09 & 5.0 & 3.22E-09 & 5.0 & 6.88E-09 & 5.1 \\
    128 & 7.94E-11 & 5.0 & 1.00E-10 & 5.0 &	2.03E-10 & 5.1 \\
    256 & 2.48E-12 & 5.0 & 3.12E-12	& 5.0 &	6.33E-12 & 5.0 \\ 
    \\
    $c\mathbb{P}_1\mathbb{P}_5$ &&&&&& \\
    8   & 3.49E-05 & -   & 4.42E-05 & -   & 8.97E-05 & - \\
    16  & 4.97E-07 & 6.1 & 5.99E-07 & 6.2 & 1.20E-06 & 6.0 \\ 
    32  & 7.22E-09 & 6.1 & 8.84E-09 & 6.1 & 1.91E-08 & 6.0 \\ 
    64  & 1.08E-10 & 6.1 & 1.35E-10 & 6.0 & 3.27E-10 & 5.9 \\
    128 & 1.68E-12 & 6.0 & 2.12E-12 & 6.0 &	5.39E-12 & 5.9 \\
    \hline
  \end{tabular}
\end{center}
}
\end{table}

From \cref{corollary:two_error_estimates}, it can be known that this vertical shift between the $\mathbb{P}_m$ and $c\mathbb{P}_n\mathbb{P}_m$ methods is bounded by $\left\| \epsilon^{\prime(m)}_{h,\mathcal{R}}\right\|$. In the following, we numerically validate the constants in our error bounds. For the constants associated with the global consistency of the reconstructor, i.e, $C_{\mathcal{R}_c}$ and $C_u$, instead of evaluating them separately, we approximate the term $C_{\mathcal{R}_c}C_uh^{m+1}$ by the initial error:
\begin{equation}
    C_{\mathcal{R}_c}C_uh^{m+1} \approx \left\| \epsilon^{\prime(m)}_{h,\mathcal{R}}(\cdot, 0)\right\| = \left\| u^{(m)}_h(\cdot, 0) - \mathcal{R}_h\left( \Pi_hu^{(m)}_h(\cdot, 0) \right) \right\|.
\end{equation}

\begin{figure}[tbhp]
\centering
    \subfloat[]{\label{fig:convergence_rate_C_Rs}\includegraphics[trim=3cm 0cm 3cm 0cm, clip, width=0.34\textwidth]{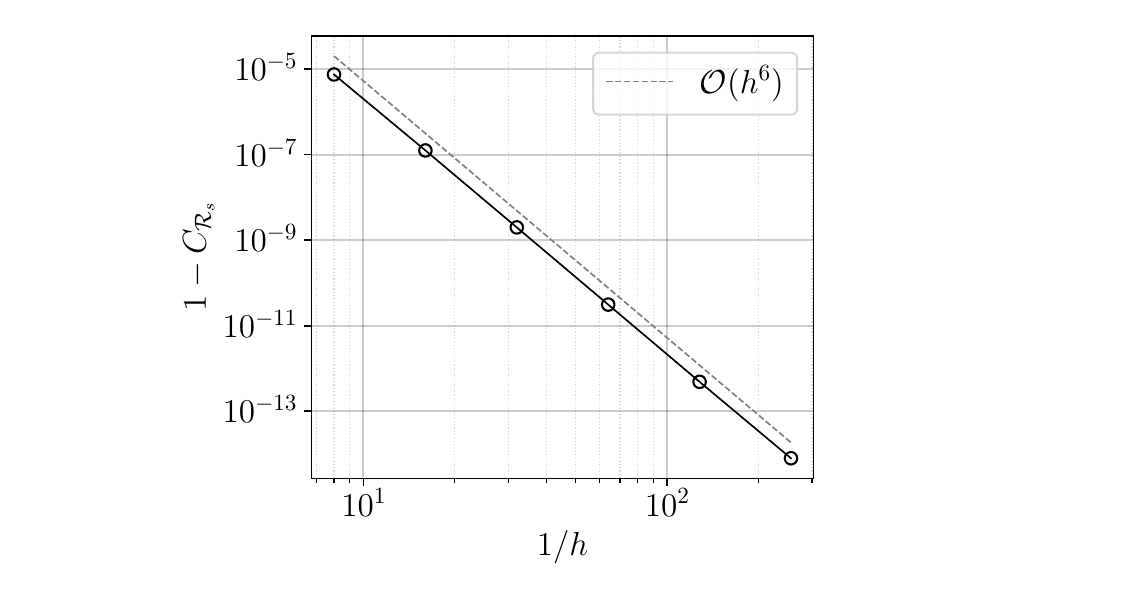}}
    \subfloat[]{\label{fig:error_p1p3_t0-1_2estimates}\includegraphics[trim=0cm 0cm 0cm 1.6cm, clip, width=0.66\textwidth]{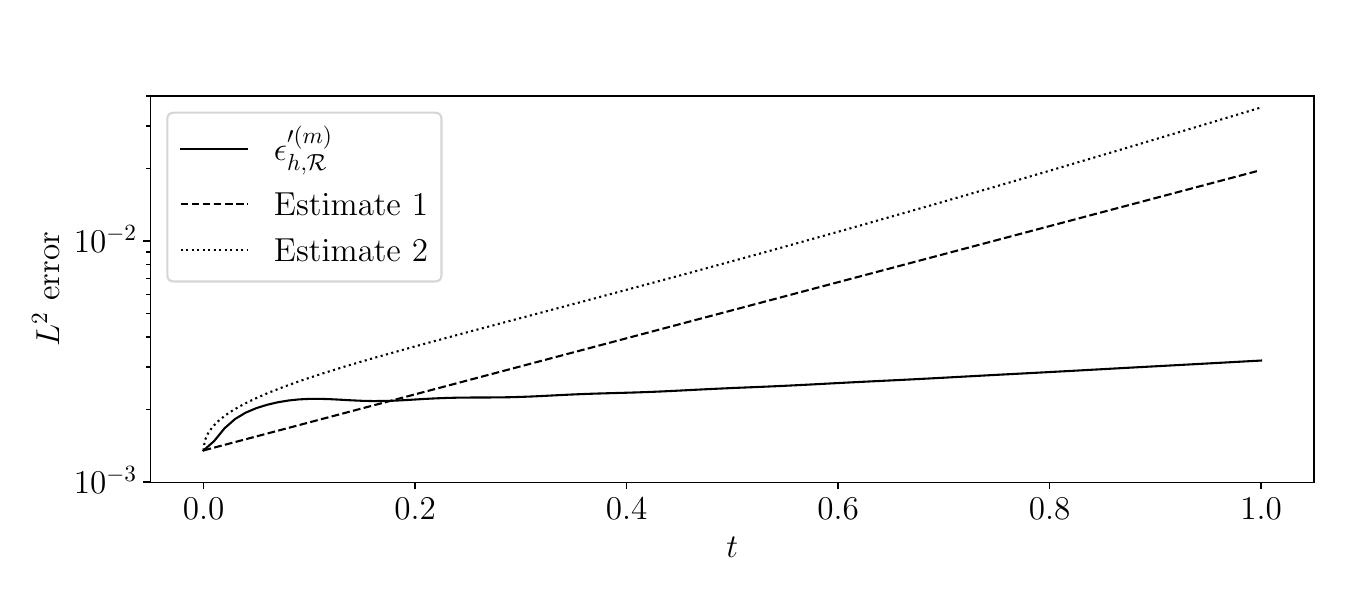}}
    \caption{(a) The convergence of constant $C_{\mathcal{R}_s}$; (b) The comparison of the evolution of $\left\| \epsilon^{\prime(m)}_{h,\mathcal{R}}\right\|$ and two error estimating methods: $\left\| \epsilon^{\prime(m)}_{h,\mathcal{R}}(\cdot,t)\right\|\leq \left\| \epsilon^{\prime(m)}_{h,\mathcal{R}}(\cdot, 0)\right\| \alpha(t)$, where $\alpha(t)$ is taken as $e^{ \beta_{\max}t}$ and $\frac{\beta_{\max}}{\left( 2\beta_{\max}+1\right)^{1/2}}\left( e^{\left( 2\beta_{\max}+1 \right)t} -1\right)^{1/2}+1$ respectively for Estimate 1 and Estimate 2.}
\label{fig:C_Rs_and_2estimates}
\end{figure}

\begin{figure}[tbhp]
\centering
    \includegraphics[trim=1cm 0cm 1cm 0cm, clip, width=0.7\textwidth]{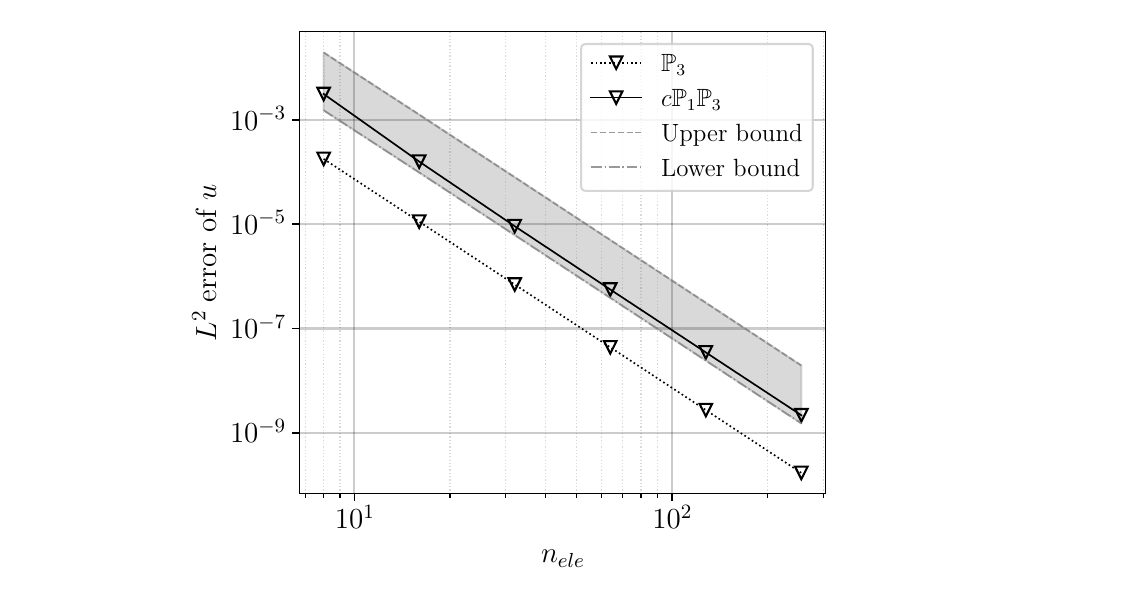}  \\
    \caption{The upper and lower bound estimates of $3^{rd}$-order methods for the one-dimensional linear advection problem.}
\label{fig:convergence_rate_p3_linear_advection}
\end{figure}

As for the constant $C_{\mathcal{R}_s}$ associated with global stability, since $\mathcal{R}_h$ is a linear operator, the inequality \cref{eq:global_stability} can be written as:
\begin{equation}
    \frac{\left\| \mathcal{R}_h\left( v_h^{(m)} \right) \right\|} {\left\|  v_h^{(m)} \right\|}\leq C_{\mathcal{R}_s},\qquad \forall v_h^{(m)} \in \mathcal{V}^{(m)}_h,
\end{equation}
and $C_{\mathcal{R}_s}$ measures how much energy the reconstructor recovers from the projected solution compared to the original high-order one. We compute $C_{\mathcal{R}_s}$ of the reconstructor from $\mathcal{V}^{(1)}_h$ to  $\mathcal{V}^{(3)}_h$ numerically for different $h$ and find that it converges to $1$ from the bottom. The convergence of $C_{\mathcal{R}_s}$ can be seen in \cref{fig:convergence_rate_C_Rs} and it converges to $1$ with the rate of $\mathcal{O}(h^{6})$. When involved in the error estimates, it can be simply approximated by $1$.

Finally, considering that $\mathcal{P}^{(m)}_h$ is linear in this case and the corresponding Lipschitz constant $\beta_{\max}$ can be numerically computed from the $L^2$-norms of the solution and its time derivative. Here we take the value $\beta_{\max} \approx 2.68$.

With these approximations in hands, the evolution of $\left\| \epsilon^{\prime(m)}_{h,\mathcal{R}}\right\|$ can be bounded by:
\begin{equation}
\label{eq:alphat}
    \left\| \epsilon^{\prime(m)}_{h,\mathcal{R}}(\cdot,t)\right\|\leq \left\| \epsilon^{\prime(m)}_{h,\mathcal{R}}(\cdot, 0)\right\| \alpha(t),
\end{equation}
where $\alpha(t)$ controls the evolution and can be computed from \cref{eq:Cmb1} and \cref{eq:Cmb2}:
    \begin{enumerate}
        \item \textbf{Estimate 1:}
            \begin{equation}
            \label{eq:alpha1}
                \alpha_1(t) = e^{ \beta_{\max}t};
            \end{equation}

        \item \textbf{Estimate 2:}
            \begin{equation}
            \label{eq:alpha2}
                \alpha_2(t) = \frac{\beta_{\max}}{\left( 2\beta_{\max}+1\right)^{1/2}}\left( e^{\left( 2\beta_{\max}+1 \right)t} -1\right)^{1/2}+1.
        \end{equation}
    \end{enumerate}
The results of two estimating methods are compared in \cref{fig:error_p1p3_t0-1_2estimates}. It can be seen that \textbf{Estimate 1} provides better bounds. Assuming that the error increases monotonically, taking $\alpha(t)=1$ leads to an estimate of the lower bound for the error. The $L^2$-error of the $3^{rd}$-order methods on different meshes is plotted in \cref{fig:convergence_rate_p3_linear_advection}, where the upper and lower bounds are also given. It shows that the real error of $c\mathbb{P}_n\mathbb{P}_m$ is located in the bounded region (denoted gray), which shows the effectiveness of our error estimating methods.

\subsection{Viscous Burgers' equation}

One-dimensional viscous Burgers' equation is taken here as an example to test the convergence of $c\mathbb{P}_n\mathbb{P}_m$ scheme when the viscous term is involved:
\begin{equation}
    u_t +f_x=g_x,
\end{equation}
where $f= \frac{1}{2}u^2$ and $g=\nu u_x$ are the inviscid and viscous flux, respectively. We study the $h$-convergence property using the following manufactured solution:
\begin{equation}
    u(x,t) = \sin(x)e^{-\nu t},\;x\in[0,2\pi]
\end{equation}
with the source term:
\begin{equation}
    s = \frac{1}{2}\sin({2x})e^{-2\nu t}.
\end{equation}
The periodic boundary conditions are imposed. Here we set $\nu=0.01$. The $L^2$-errors are checked at $t=2$.

\begin{figure}[t]
\centering
    \includegraphics[trim=1cm 0cm 1cm 0cm, clip, width=0.7\textwidth]{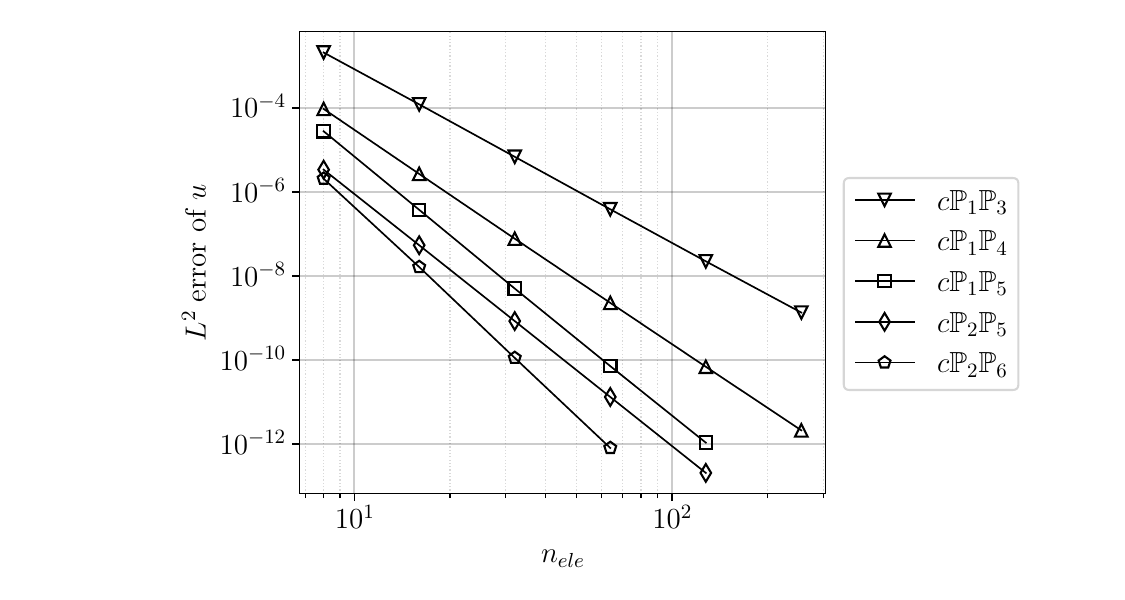}  \\
    \caption{The convergence of $L^2$-error of the standard DGSEM-LGL methods and the $c\mathbb{P}_n\mathbb{P}_m$ methods for the one-dimensional Burgers' equation.}
\label{fig:convergence_rate_burgers}
\end{figure}

\begin{table}[ht]
{\footnotesize
  \caption{The EOC of $c\mathbb{P}_n\mathbb{P}_m$ for one-dimensional Burgers' equation}  \label{tab:convergence_order_burgers}
\begin{center}
  \begin{tabular}{ccccccc} \hline
   $n_{ele}$ & $L^1$ & $\mathcal{O}_{L^1}$ & $L^2$ & $\mathcal{O}_{L^2}$ & $L^{\infty}$ & $\mathcal{O}_{L^{\infty}}$ \\ \hline
   $c\mathbb{P}_1\mathbb{P}_3$ &&&&&& \\
    8   & 1.59E-03 & -   & 2.07E-03 & -   & 6.93E-03 & - \\
    16  & 8.94E-05 & 4.1 & 1.21E-04 & 4.1 & 4.48E-04 & 4.0 \\ 
    32  & 5.08E-06 & 4.1 & 6.85E-06 & 4.1 & 2.63E-05 & 4.1 \\ 
    64  & 2.92E-07 & 4.1 & 3.87E-07 & 4.1 & 1.49E-06 & 4.1 \\
    128 & 1.74E-08 & 4.1 & 2.24E-08 & 4.1 &	8.26E-08 & 4.1 \\
    256 & 1.05E-09 & 4.0 & 1.35E-09	& 4.0 &	4.66E-09 & 4.1 \\ 
    \\
    $c\mathbb{P}_1\mathbb{P}_4$ &&&&&& \\
    8   & 7.90E-05 & -   & 9.41E-05 & -   & 1.97E-04 & - \\
    16  & 2.15E-06 & 5.2 & 2.68E-06 & 5.1 & 6.40E-06 & 4.9 \\ 
    32  & 6.21E-08 & 5.1 & 7.73E-08 & 5.1 & 1.71E-07 & 5.2 \\ 
    64  & 1.81E-09 & 5.1 & 2.30E-09 & 5.1 & 4.75E-09 & 5.1 \\
    128 & 5.48E-11 & 5.0 & 6.96E-11 & 5.0 &	1.40E-10 & 5.1 \\
    256 & 1.68E-12 & 5.0 & 2.14E-12	& 5.0 &	4.26E-12 & 5.0 \\
    \\
    $c\mathbb{P}_1\mathbb{P}_5$ &&&&&& \\
    8   & 2.86E-05 & -   & 2.77E-05 & -   & 6.91E-05 & - \\
    16  & 2.92E-07 & 6.2 & 3.76E-07 & 6.2 & 1.10E-06 & 5.9 \\ 
    32  & 3.97E-09 & 6.2 & 5.04E-09 & 6.2 & 1.52E-08 & 6.2 \\ 
    64  & 5.77E-11 & 6.1 & 7.21E-11 & 6.1 & 2.12E-10 & 6.2 \\
    128 & 8.60E-13 & 6.1 & 1.09E-12 & 6.1 &	3.04E-12 & 6.1 \\
    \\
    $c\mathbb{P}_2\mathbb{P}_5$ &&&&&& \\
    8   & 2.86E-06 & -   & 3.38E-06 & -   & 7.18E-06 & - \\
    16  & 4.54E-08 & 6.0 & 5.40E-08 & 6.0 & 1.21E-07 & 5.9 \\ 
    32  & 7.14E-10 & 6.0 & 8.49E-10 & 6.0 & 1.90E-09 & 6.0 \\ 
    64  & 1.12E-11 & 6.0 & 1.33E-11 & 6.0 & 2.99E-11 & 6.0 \\
    128 & 1.75E-13 & 6.0 & 2.08E-13 & 6.0 &	4.69E-13 & 6.0 \\
    \\
    $c\mathbb{P}_2\mathbb{P}_6$ &&&&&& \\
    4   & 1.44E-06 & -   & 2.06E-06 & -   & 6.09E-06 & - \\
    8   & 1.20E-08 & 6.9 & 1.64E-08 & 7.0 & 5.78E-08 & 6.7 \\ 
    16  & 9.20E-11 & 7.0 & 1.13E-10 & 7.2 & 2.63E-10 & 7.7 \\ 
    32  & 6.75E-13 & 7.1 & 8.16E-13 & 7.1 & 1.62E-12 & 7.3 \\
    \hline
  \end{tabular}
\end{center}
}
\end{table}

The comparison of $L^2$-errors and the detailed EOC of $c\mathbb{P}_n\mathbb{P}_m$ methods (both for $n=1$ and $n=2$) are given in \cref{fig:convergence_rate_burgers} and \cref{tab:convergence_order_burgers} respectively. As expected, all the methods tested ($n=1,m=3,4,5$ and $n=2,m=5,6$) achieve the optimal convergence order of $m+1$, which proves that the discretization for the viscous term in \cref{eq:projected_g^m} maintains high-order convergence on low-order nodes. 

\begin{figure}[tbhp]
\centering
    \subfloat[$3^{rd}$-order methods]{\label{fig:cost_3order_burgers}\includegraphics[trim=3cm 0cm 3cm 0cm, clip, width=0.5\textwidth]{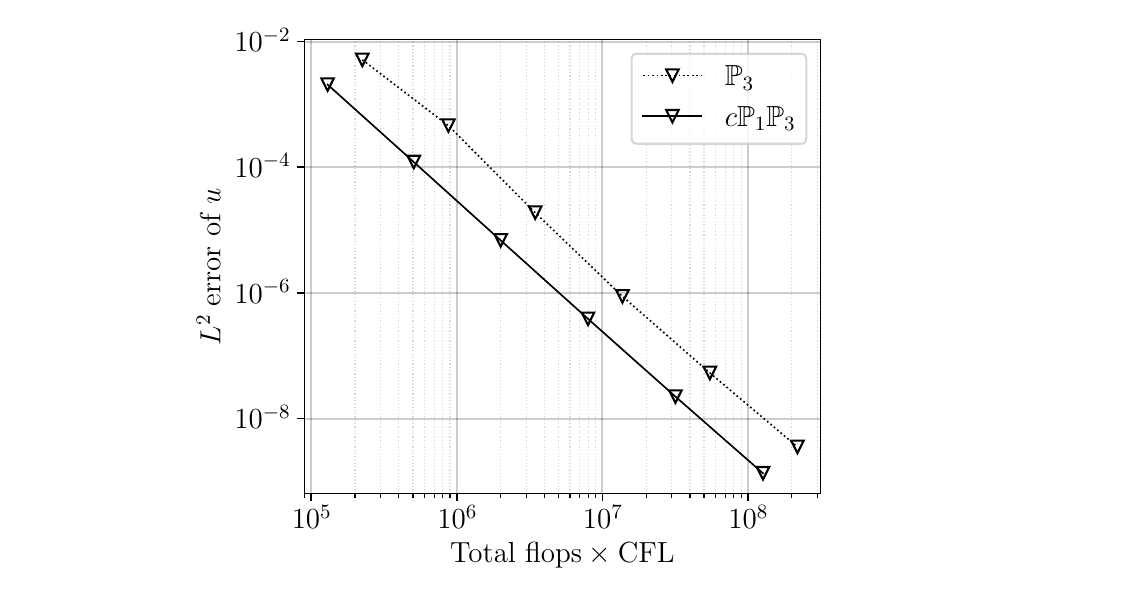}} 
    \subfloat[$4^{th}$-order methods]{\label{cost_4order_burgers}\includegraphics[trim=3cm 0cm 3cm 0cm, clip, width=0.5\textwidth]{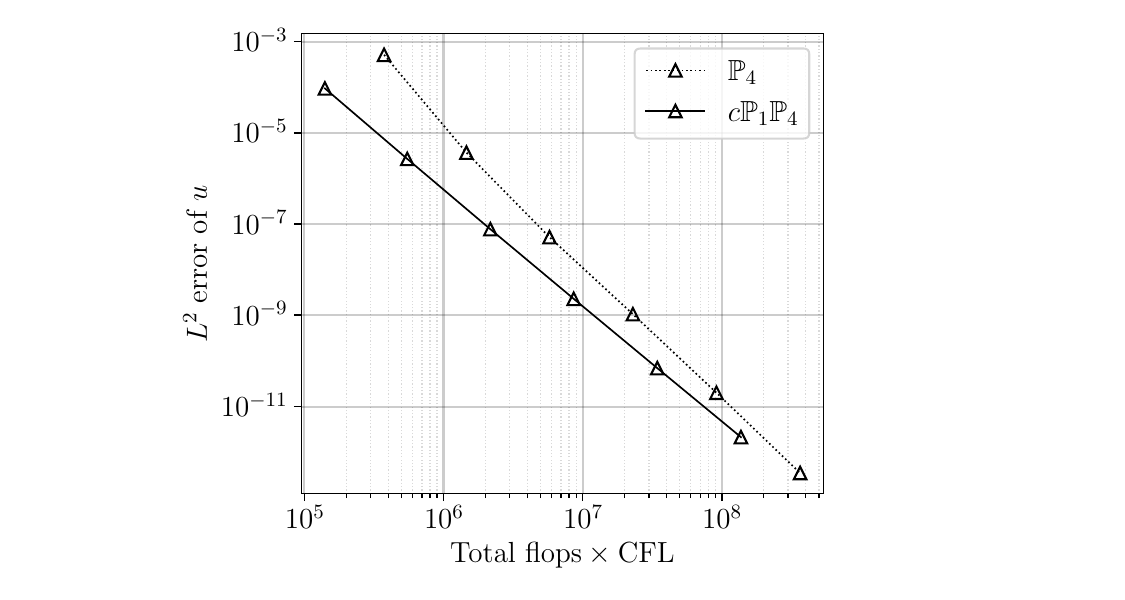}}
    \\
    \subfloat[$5^{th}$-order methods]{\label{fig:cost_5order_burgers}\includegraphics[trim=3cm 0cm 3cm 0cm, clip, width=0.5\textwidth]{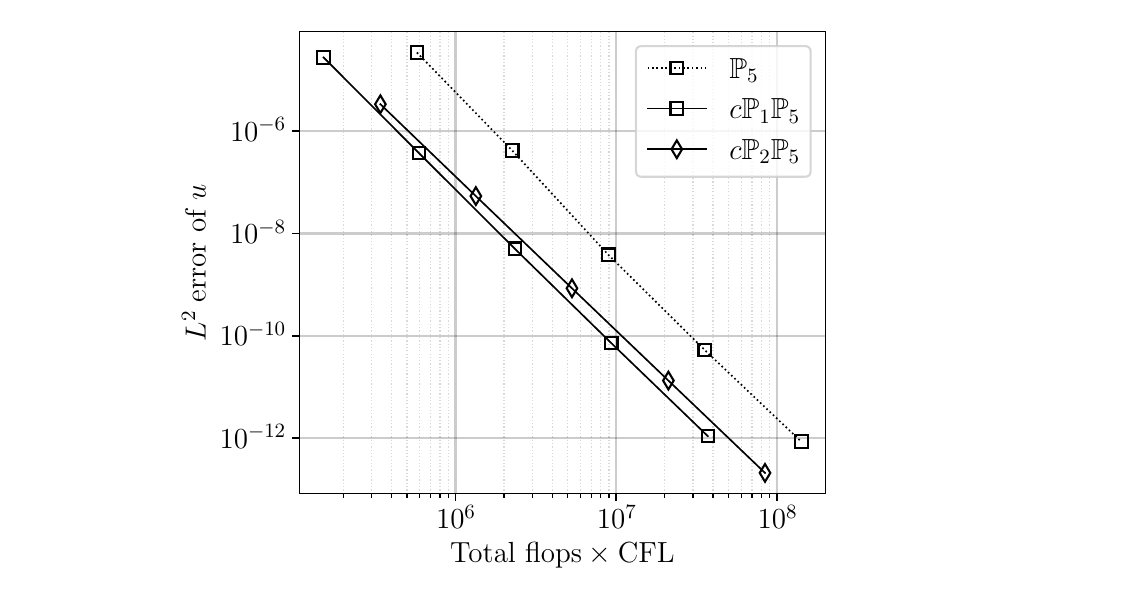}} 
    \subfloat[$6^{th}$-order methods]{\label{cost_6order_burgers}\includegraphics[trim=3cm 0cm 3cm 0cm, clip, width=0.5\textwidth]{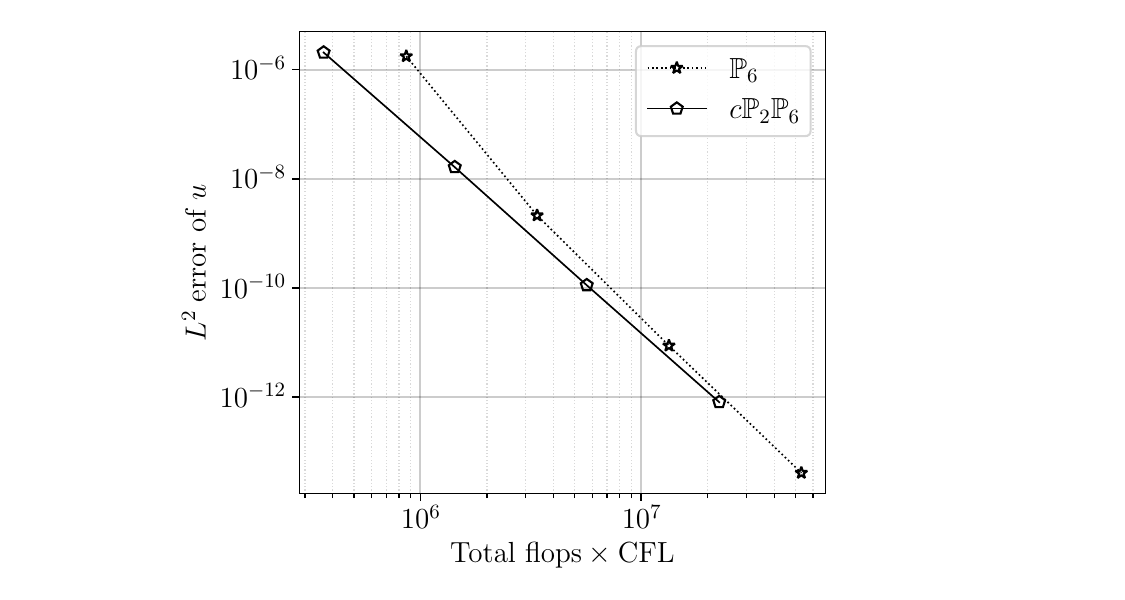}}
    \caption{The comparison of computation efficiency of different methods for solving one-dimensional Burgers equation}
\label{fig:cost_comparison_burgers}
\end{figure}

Furthermore, to compare the actual computational cost of different methods, the $L^2$-errors and the corresponding total flops of the simulations are plotted in \cref{fig:cost_comparison_burgers}. Since the CFL number is not exactly the same for different methods, the total flops are normalized by the CFL number to denote the equivalent computation overhead. It can be seen that to achieve the same level of $L^2$-error, $c\mathbb{P}_n\mathbb{P}_m$ methods need fewer flops than the standard methods, which results from the combined effects of the simplified implementations of projected high-order time derivative \cref{eq:projected_ho_time_derivative}and the modal reconstructor. 

Note that although both $c\mathbb{P}_1\mathbb{P}_5$ and $c\mathbb{P}_2\mathbb{P}_5$ demonstrate $6^{th}$-order convergence, the convergence line of the latter is slightly lower than that of the former in \cref{fig:convergence_rate_burgers}. This phenomenon can also be explained by \cref{eq:alphat}. Although the reconstructors applied in both schemes are $6^{th}$-order, the projection-reconstruction error of $c\mathbb{P}_2\mathbb{P}_5$ is smaller than that of $c\mathbb{P}_1\mathbb{P}_5$ since more DOFs are provided in the $\mathbb{P}_2$ element than in $\mathbb{P}_1$. However, this gain requires an additional computational cost, and from \cref{fig:cost_5order_burgers} it can be found that $c\mathbb{P}_1\mathbb{P}_5$ is slightly more efficient than $c\mathbb{P}_2\mathbb{P}_5$.

\subsection{One-dimensional Euler equation}
Now we extend the $c\mathbb{P}_n\mathbb{P}_m$ methods to conservative systems. Considering the one-dimensional Euler equation:
\begin{equation}
    \vec{q}_t+\vec{f}(\vec{q})_x = \vec{s},
\end{equation}
where $\vec{q}$ and $\vec{f}(\vec{q})$ are the vectors of conservative variables and the corresponding non-linear fluxes:
\begin{equation}
    \vec{q}=\left[\begin{array}{c}
         \rho \\
         \rho u \\
         E
    \end{array}\right],\;
    \vec{f}(\vec{q}) = \left[\begin{array}{c}
         \rho u \\
         \rho u^2+p \\
         u(E + p)
    \end{array}\right],
\end{equation}
and $\vec{s}$ is the source term. $\rho$ is the density; $u$ is the velocity; $p$ is the pressure and $E$ is the energy. The hypothesis of a calorically ideal gas is adopted here to close the system:
\begin{equation}
    p = (\gamma-1)\left( E - \frac{1}{2}\rho u^2 \right),
\end{equation}
where $\gamma$ is the specific heat ratio and is set to $1.4$.

\begin{figure}[tbhp]
\centering
    \subfloat[$n=1$, $m=3,4,5$]{\label{fig:convergence_rate_nonlinear_upwind_p1px}\includegraphics[trim=2cm 0cm 1cm 0cm, clip, width=0.7\textwidth]{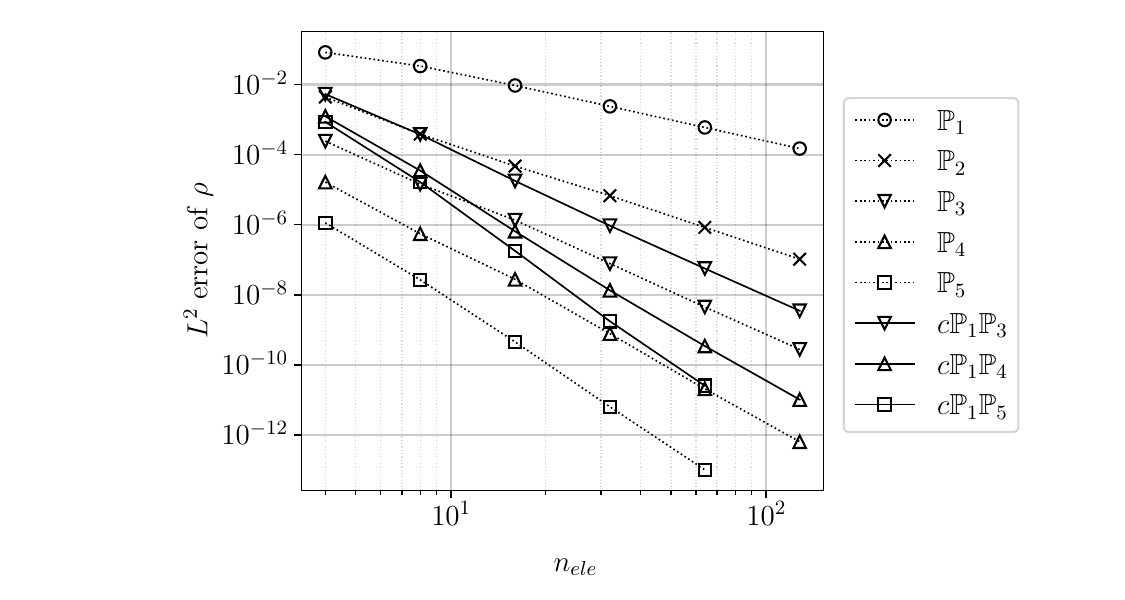}}  \\
    \subfloat[$n=2$, $m=5,6$]{\label{fig:convergence_rate_nonlinear_upwind_p2px}\includegraphics[trim=2cm 0cm 1cm 0cm, clip, width=0.7\textwidth]{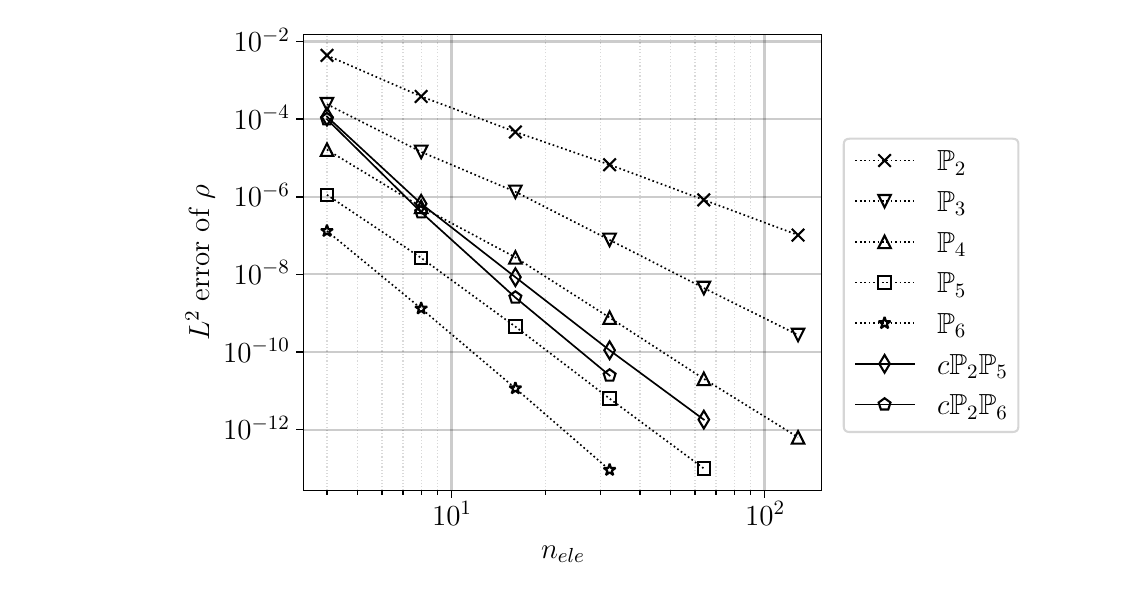}}
    \caption{The convergence of $L^2$-error of standard DGSEM-LGL methods and $c\mathbb{P}_n\mathbb{P}_m$ methods for the one-dimensional Euler equation.}
\label{fig:convergence_rate_nonlinear_upwind}
\end{figure}

\begin{table}[htbp]
{\footnotesize
  \caption{The EOC of $c\mathbb{P}_n\mathbb{P}_m$ for one-dimensional Euler equation}  \label{tab:convergence_order_euler}
\begin{center}
  \begin{tabular}{ccccccc} \hline
   $n_{ele}$ & $L^1$ & $\mathcal{O}_{L^1}$ & $L^2$ & $\mathcal{O}_{L^2}$ & $L^{\infty}$ & $\mathcal{O}_{L^{\infty}}$ \\ \hline
   $c\mathbb{P}_1\mathbb{P}_3$ &&&&&& \\
    4   & 4.38E-03 & -   & 5.31E-03 & -   & 1.44E-02 & - \\
    8   & 3.16E-04 & 3.8 & 3.75E-04 & 3.8 & 1.15E-03 & 3.7 \\ 
    16  & 1.44E-05 & 4.4 & 1.77E-05 & 4.4 & 6.08E-04 & 4.2 \\ 
    32  & 6.99E-07 & 4.4 & 9.31E-07 & 4.2 & 3.63E-06 & 4.0 \\
    64  & 4.07E-08 & 4.1 & 5.59E-08 & 4.0 &	2.21E-07 & 4.0 \\
    128 & 2.51E-09 & 4.0 & 3.47E-09	& 4.0 &	1.35E-08 & 4.0 \\ 
    \\
    $c\mathbb{P}_1\mathbb{P}_4$ &&&&&& \\
    4   & 1.10E-03 & -   & 1.23E-03 & -   & 2.41E-02 & - \\
    8   & 3.09E-05 & 5.1 & 3.49E-05 & 5.1 & 6.80E-05 & 5.1 \\ 
    16  & 5.55E-07 & 5.8 & 6.45E-07 & 5.7 & 1.38E-06 & 5.6 \\ 
    32  & 1.07E-08 & 5.7 & 1.33E-08 & 5.6 & 2.90E-08 & 5.5 \\
    64  & 2.75E-10 & 5.3 & 3.42E-10 & 5.3 &	7.29E-10 & 5.3 \\
    128 & 8.11E-12 & 5.0 & 1.01E-11	& 5.0 &	2.12E-11 & 5.0 \\ 
    \\
    $c\mathbb{P}_1\mathbb{P}_5$ &&&&&& \\
    4   & 7.40E-04 & -   & 8.56E-04 & -   & 1.62E-03 & - \\
    8   & 1.43E-05 & 5.7 & 1.64E-05 & 5.7 & 2.66E-05 & 6.0 \\ 
    16  & 1.58E-07 & 6.5 & 1.78E-07 & 6.5 & 3.22E-07 & 6.4 \\ 
    32  & 1.50E-09 & 6.7 & 1.74E-09 & 6.7 & 3.60E-09 & 6.5 \\
    64  & 2.08E-11 & 6.1 & 2.51E-11 & 6.1 &	5.64E-11 & 6.0 \\
    \\
    $c\mathbb{P}_2\mathbb{P}_5$ &&&&&& \\
    4   & 1.03E-04 & -   & 1.16E-04 & -   & 1.95E-04 & - \\
    8   & 5.28E-07 & 7.6 & 6.61E-07 & 7.4 & 2.34E-06 & 6.4 \\ 
    16  & 6.52E-09 & 6.3 & 8.48E-09 & 6.3 & 2.81E-08 & 6.3 \\ 
    32  & 9.03E-11 & 6.1 & 1.11E-10 & 6.2 & 3.29E-10 & 6.4 \\
    64  & 1.56E-12 & 5.9 & 1.81E-12 & 6.0 &	4.38E-12 & 6.2 \\
    \\
    $c\mathbb{P}_2\mathbb{P}_6$ &&&&&& \\
    4   & 8.66E-05 & -   & 9.93E-05 & -   & 1.92E-04 & - \\
    8   & 3.42E-07 & 8.0 & 3.99E-07 & 8.0 & 1.18E-06 & 7.3 \\ 
    16  & 1.94E-09 & 7.5 & 2.53E-09 & 7.3 & 1.14E-08 & 6.7 \\ 
    32  & 1.86E-11 & 6.7 & 2.48E-10 & 6.7 & 1.04E-10 & 6.8 \\
    \hline
  \end{tabular}
\end{center}
}
\end{table}

Following \cite{gassner_split_2016}, we test the convergence order of different methods using the following nonlinear manufactured solution in the domain $[0,2]$:
\begin{equation}
\label{eq:manufactured_solution_euler}
\begin{split}
    \rho(x,t) &= \frac{1}{10}\sin{\left( \pi(x-t) \right)}+2,\\
    u(x,t) &= 1,\\
    E(x,t) &= \left( \frac{1}{10}\sin{\left( \pi(x-t) \right)}+2 \right)^2,
\end{split}    
\end{equation}
and the corresponding source terms are:
\begin{equation}
\begin{split}
    s_{\rho} &= 0,\\
    s_{\rho u} &= c_1\cos{\left( \pi(x-t) \right)} + c_2\sin{\left( 2\pi(x-t) \right)},\\
    s_{E} &= c_1\cos{\left( \pi(x-t) \right)} + c_2\sin{\left( 2\pi(x-t) \right)},
\end{split}    
\end{equation}
where $c_1 = \frac{7}{20}(\gamma-1)\pi$ and $c_2 = \frac{1}{100}(\gamma-1)\pi$. 

\begin{figure}[tbhp]
\centering
    \subfloat[$3^{rd}$-order methods]{\label{fig:cost_3order_nonlinear}\includegraphics[trim=3cm 0cm 3cm 0cm, clip, width=0.5\textwidth]{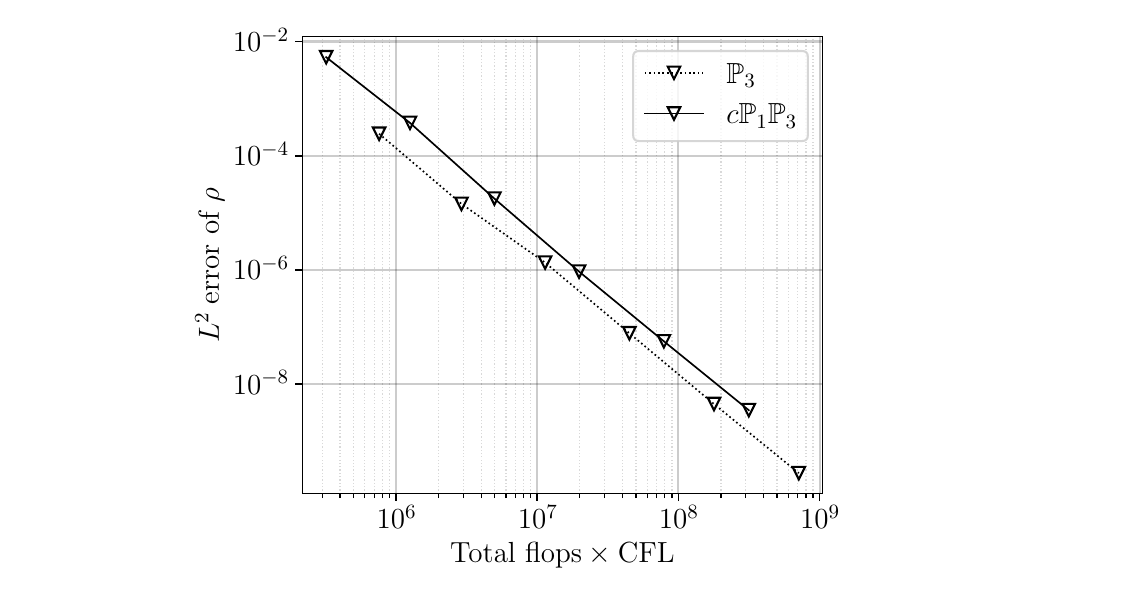}} 
    \subfloat[$4^{th}$-order methods]{\label{cost_4order_nonlinear}\includegraphics[trim=3cm 0cm 3cm 0cm, clip, width=0.5\textwidth]{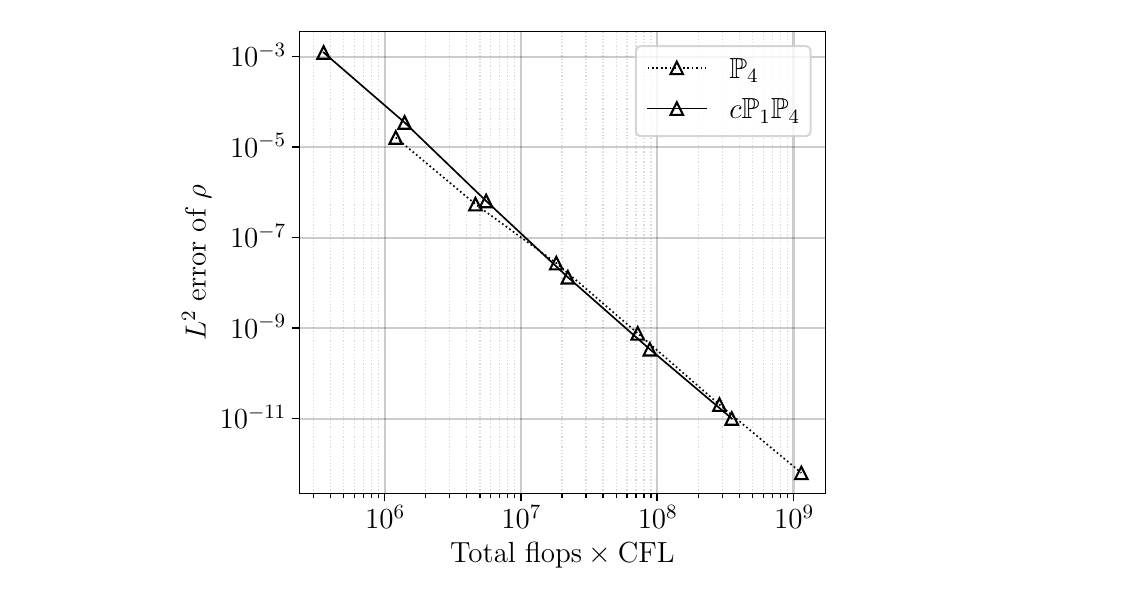}}
    \\
    \subfloat[$5^{th}$-order methods]{\label{fig:cost_5order_nonlinear}\includegraphics[trim=3cm 0cm 3cm 0cm, clip, width=0.5\textwidth]{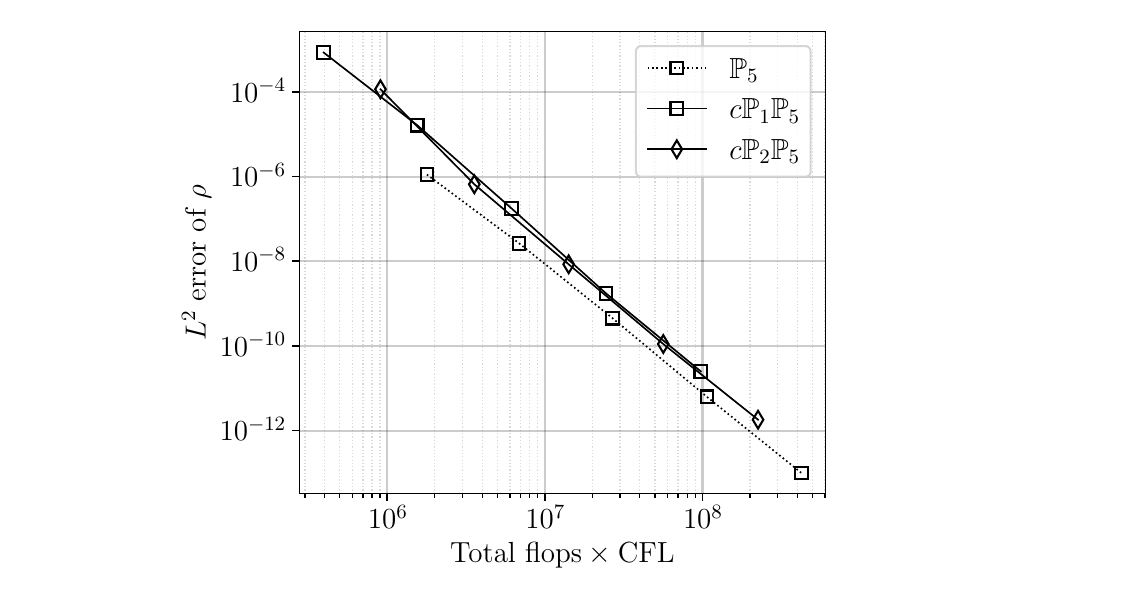}} 
    \subfloat[$6^{th}$-order methods]{\label{cost_6order_nonlinear}\includegraphics[trim=3cm 0cm 3cm 0cm, clip, width=0.5\textwidth]{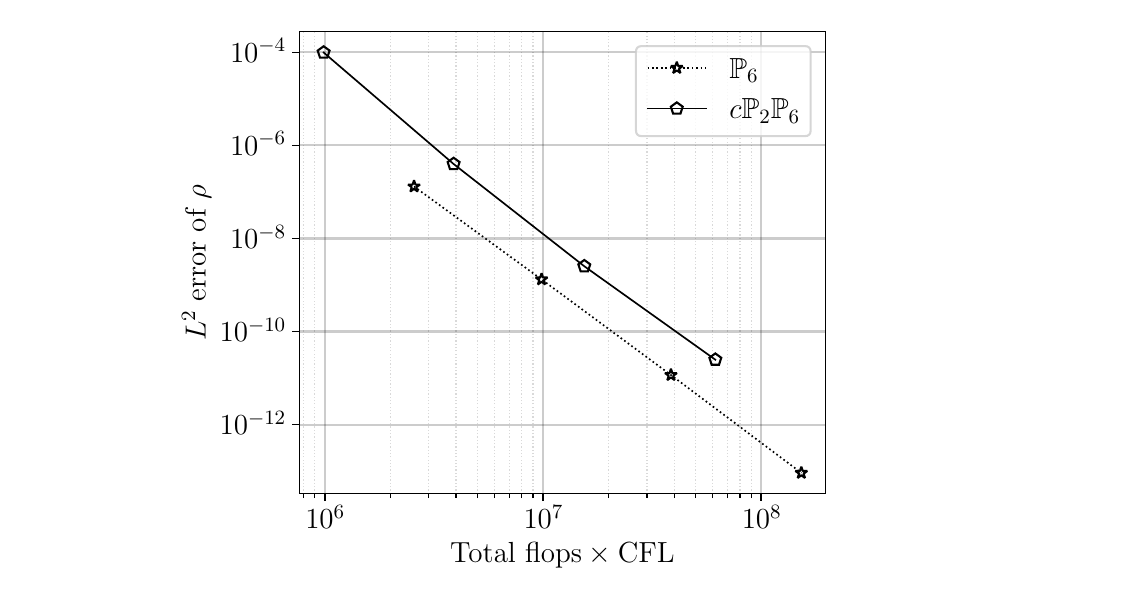}}
    \caption{The comparison of computation efficiency of different methods for solving one-dimensional Euler equation}
\label{fig:cost_comparison_euler}
\end{figure}

We solve this problem on a uniform mesh ($n_{ele}$ ranges from $4$ to $128$) and the upwind numerical flux is adopted here. \Cref{fig:convergence_rate_nonlinear_upwind} presents the $L^2$-error convergence behavior of the standard DGSEM-LGL and the proposed $c\mathbb{P}_n\mathbb{P}_m$ schemes for this problem. More specifically, all the results of the $c\mathbb{P}_1\mathbb{P}_m$ schemes are shown in \cref{fig:convergence_rate_nonlinear_upwind_p1px}, while those of the $c\mathbb{P}_2\mathbb{P}_m$ schemes are plotted in \cref{fig:convergence_rate_nonlinear_upwind_p2px}. A detailed EOC of various methods is provided in \cref{tab:convergence_order_euler}, and again, all methods demonstrate the convergence of ${m+1}^{th}$-order. Moreover, the computational efficiency of the standard DGSEM-LGL and $c\mathbb{P}_2\mathbb{P}_m$ methods is compared in \cref{fig:cost_comparison_euler}. 

It should be noted that, without viscous terms, the efficiency of $c\mathbb{P}_2\mathbb{P}_m$ is slightly lower than that of the standard method of the same order, except for the case of methods of $4^{th}$-order. A reasonable explanation can be found in \cref{fig:convergence_rate_nonlinear_upwind_p1px}, where it can be seen that the accuracy gain is considerable when  increasing $m$ from $3$ to $4$ but smaller from $4$ to $5$. This odd-even effect can also be detected if we check the error of $c\mathbb{P}_1\mathbb{P}_m,\;m=3,4,5$ for the case of $n_{ele}=4$. Increasing $m=3$ to $4$ leads to a reduction in the $L^2$-error from 5.31E-03 to 1.23E-03 (approximately x4.3). However, increasing $m=4$ to $5$ reduces the $L^2$-error from 1.23E-03 to 8.56E-04 (only about x1.4). Considering that the error is dominated by the projection-reconstruction error of the initial condition, reconstruction applied in $c\mathbb{P}_1\mathbb{P}_4$ is more efficient when accounting for the accuracy gain per computational cost for the sine wave solution \cref{eq:manufactured_solution_euler}.

\subsection{Two-dimensional isentropic vortex}

As a smooth, exact solution to the non-linear Euler equations, the isentropic vortex is an essential test case for assessing the convergence order of high-order methods. It is described by:
\begin{equation}
\begin{split}
    \rho(x,y,t) &= \left( 1-\frac{\epsilon^2(\gamma-1)M_{\infty}^2}{8\pi^2}\exp\left({f(x,y,t)}\right) \right)^{\frac{1}{\gamma-1}},\\
    u(x,y,t) &= 1 - \epsilon_v\frac{y-y_0}{2\pi}\exp\left(\frac{f(x,y,t)}{2}\right),\\
    v(x,y,t) &= 1 - \epsilon_v\frac{(x-x_0)-t}{2\pi}\exp\left(\frac{f(x,y,t)}{2}\right),\\
    p &= \frac{\rho^{\gamma}}{\gamma M_{\infty}^2},
\end{split} 
\end{equation}
where $f(x,y,t) = 1 - \left( ((x-x_0)-t)^2 + (y-y_0)^2 \right)$, $M_{\infty}$ is the Mach number, $\epsilon_v$ represents the strength of the vortex, and $(x_0,y_0)$ denotes the initial position of the vortex. Here, the values are $M_{\infty}=0.5$, $\epsilon_v=0.1$, and $(x_0,y_0)=(0,0)$. We solve this problem in the domain $(-10,10)\times(-5,5)$ and a periodic boundary condition is imposed. This rectangular domain is divided into $n_{ele}^{(x)}\times n_{ele}^{(y)}$ elements and $n_{ele}^{(x)}= 2n_{ele}^{(y)}$. Both local Lax-Friedrichs (LLF) and the Roe\cite{roe_approximate_1981} numerical fluxes are applied to study the convergence behavior of the $c\mathbb{P}_n\mathbb{P}_m$ scheme for this problem.

\begin{figure}[tbhp]
\centering
    \subfloat[LLF]{\label{fig:convergence_rate_isen_vortex_llf}\includegraphics[trim=2cm 0cm 1cm 0cm, clip, width=0.7\textwidth]{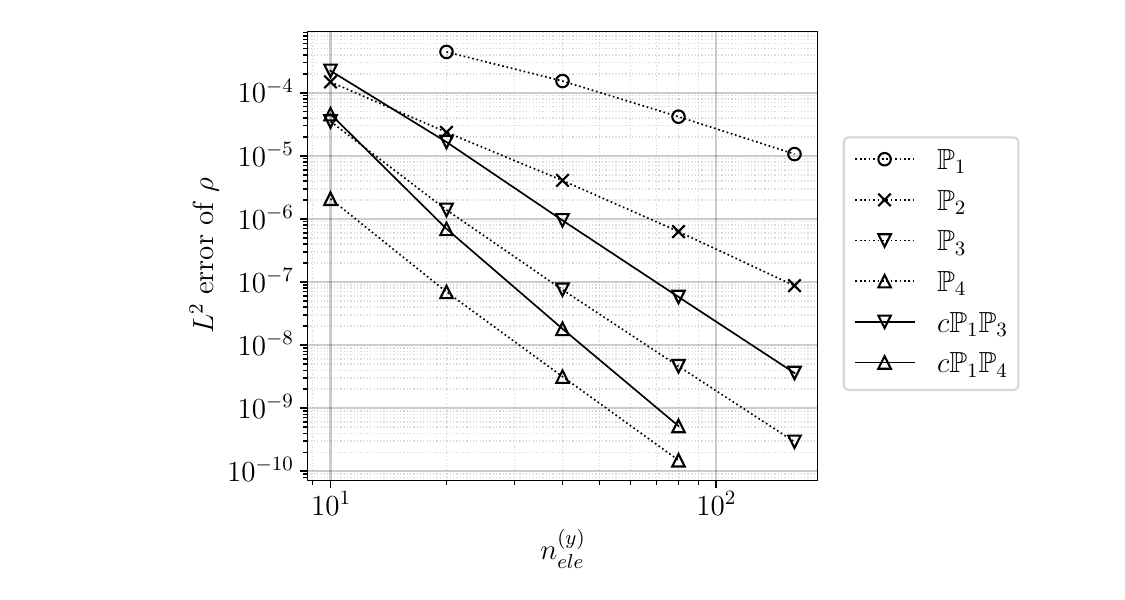}}  \\
    \subfloat[Roe]{\label{fig:convergence_rate_isen_vortex_roe}\includegraphics[trim=2cm 0cm 1cm 0cm, clip, width=0.7\textwidth]{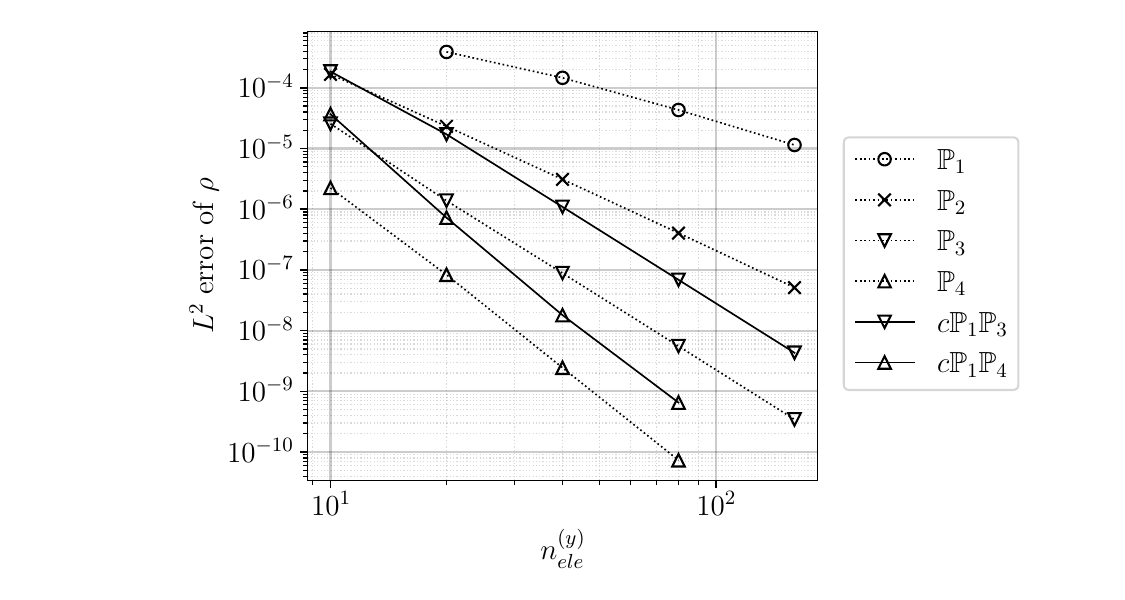}}
    \caption{The convergence order of $L^2$-error of the standard DGSEM-LGL methods and the $c\mathbb{P}_n\mathbb{P}_m$ methods for the isentropic vortex using (a)LLF and (b) Roe Riemann solvers.}
\label{fig:convergence_rate_isen_vortex}
\end{figure}

\begin{figure}[tbhp]
\centering
    \subfloat[$3^{th}$-order methods]{\label{fig:cost_p3_compare_roe_llf}\includegraphics[trim=0cm 0cm 0cm 0cm, clip, width=0.5\textwidth]{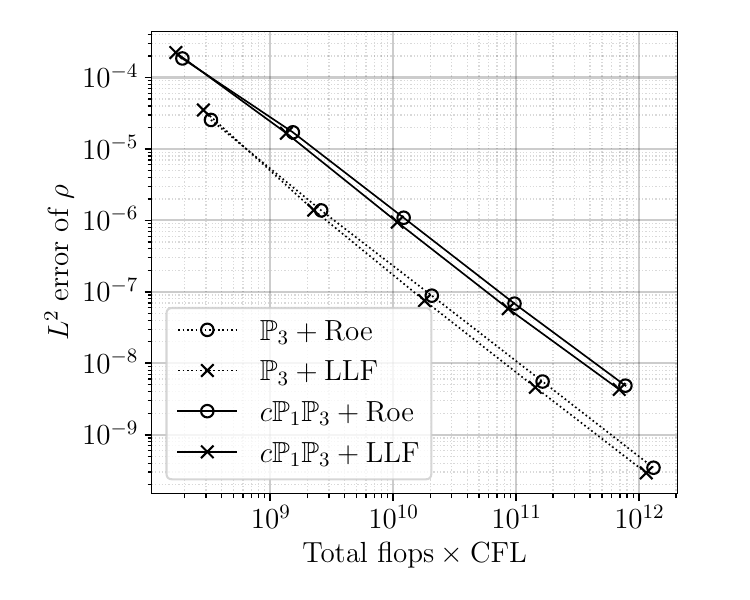}} 
    \subfloat[$4^{th}$-order methods]{\label{fig:cost_p3_compare_roe_roe}\includegraphics[trim=0cm 0cm 0cm 0cm, clip, width=0.5\textwidth]{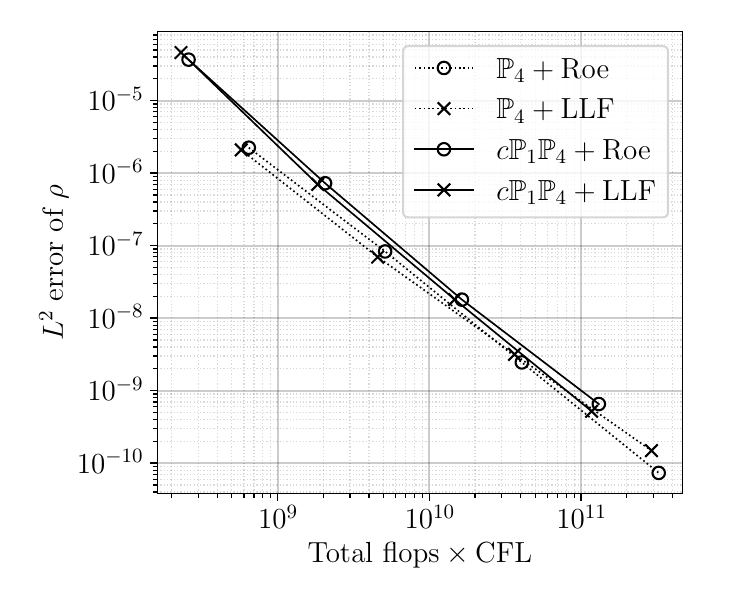}}
    \caption{The comparison of computation efficiency between the standard DGSEM-LGL methods and the $c\mathbb{P}_n\mathbb{P}_m$ methods for the isentropic vortex case.}
\label{fig:cost_isen_vortex_roe_llf}
\end{figure}

\begin{table}[htbp]
{\footnotesize
  \caption{The EOC of $c\mathbb{P}_n\mathbb{P}_m$ for two-dimensional isentropic vortex}  \label{tab:convergence_order_isentropic_vortex}
\begin{center}
  \begin{tabular}{ccccccc} \hline
   $n_{ele}^{(y)}$ & $L^1$ & $\mathcal{O}_{L^1}$ & $L^2$ & $\mathcal{O}_{L^2}$ & $L^{\infty}$ & $\mathcal{O}_{L^{\infty}}$ \\ \hline
   LLF \\
   $c\mathbb{P}_1\mathbb{P}_3$ &&&&&& \\
    10  & 9.01E-05 & -   & 2.24E-04 & -   & 2.51E-03 & - \\
    20  & 6.18E-06 & 3.9 & 1.66E-05 & 3.8 & 1.77E-04 & 3.8 \\ 
    40  & 3.34E-07 & 4.2 & 9.49E-07 & 4.1 & 1.18E-05 & 3.9 \\ 
    80  & 1.91E-08 & 4.1 & 5.79E-08 & 4.0 & 7.53E-07 & 4.0 \\
    160 & 1.14E-09 & 4.0 & 3.60E-09 & 4.0 & 4.78E-08 & 4.0 \\ 
    \\
    $c\mathbb{P}_1\mathbb{P}_4$ &&&&&& \\
    10  & 2.58E-05 & -   & 4.60E-05 & -   & 5.26E-04 & - \\
    20  & 2.55E-07 & 6.7 & 6.97E-07 & 6.0 & 1.24E-05 & 5.4 \\ 
    40  & 5.63E-09 & 5.5 & 1.81E-08 & 5.3 & 3.78E-07 & 5.0 \\ 
    80  & 1.48E-10 & 5.2 & 5.20E-10 & 5.1 & 1.10E-08 & 5.1 \\
    \\
    $\mathbb{P}_4$ &&&&&& \\
    10  & 8.29E-07 & -   & 2.09E-06 & -   & 5.52E-05 & - \\
    20  & 2.47E-08 & 5.1 & 6.96E-08 & 4.9 & 1.64E-06 & 5.1 \\ 
    40  & 9.36E-10 & 4.7 & 3.16E-09 & \fbox{4.5} & 1.03E-07 & \fbox{4.0} \\ 
    80  & 4.04E-11 & \fbox{4.5} & 1.49E-10 & \fbox{4.4} & 5.65E-09 & \fbox{4.2}\\

    \\ \hline
    Roe \\
    $c\mathbb{P}_1\mathbb{P}_3$ &&&&&& \\
    10  & 7.40E-05 & -   & 1.85E-04 & -   & 2.00E-03 & - \\
    20  & 6.24E-06 & 3.6 & 1.71E-05 & 3.4 & 2.01E-04 & 3.3 \\ 
    40  & 3.71E-07 & 4.1 & 1.09E-06 & 4.0 & 1.32E-05 & 3.9 \\ 
    80  & 2.21E-08 & 4.1 & 6.87E-08 & 4.0 & 8.26E-07 & 4.0 \\
    160 & 1.35E-09 & 4.0 & 4.32E-09 & 4.0 & 5.15E-08 & 4.0 \\
    \\
    $c\mathbb{P}_1\mathbb{P}_4$ &&&&&& \\
    10  & 2.09E-05 & -   & 3.67E-05 & -   & 3.85E-04 & - \\
    20  & 2.66E-07 & 6.3 & 7.25E-07 & 5.7 & 1.33E-05 & 4.8 \\ 
    40  & 5.72E-09 & 5.5 & 1.80E-08 & 5.3 & 3.82E-07 & 5.1 \\ 
    80  & 1.74E-10 & 5.0 & 6.54E-10 & 4.8 & 1.27E-08 & 4.9 \\

    \hline
  \end{tabular}
\end{center}
}
\end{table}

\Cref{fig:convergence_rate_isen_vortex_llf} and \cref{fig:convergence_rate_isen_vortex_roe} present the convergence of the errors $L^2$ of the standard methods and the $c\mathbb{P}_n\mathbb{P}_m$ methods using the LLF and Roe Riemann solvers, respectively. The detailed EOC can be found in \cref{tab:convergence_order_isentropic_vortex}. All $c\mathbb{P}_n\mathbb{P}_m$ methods achieve the optimal convergence order of $m+1$, as expected, which verifies the effectiveness of the two-dimensional formulation \cref{eq:2d_projected_ho_time_derivative}. However, it can be seen from \cref{fig:convergence_rate_isen_vortex_llf} that for the $4^{th}$-order standard DGSEM-LGL using the LLF Riemann solver, the convergence order degenerates as the mesh is refined. Its EOC is listed in \cref{tab:convergence_order_isentropic_vortex} and all suboptimal cases are marked with a black box. Order reduction using the LLF scheme is also reported by \cite{sherwin_order_2020}. In contrast, when coupled with the LLF scheme, the order reduction is not demonstrated for the $c\mathbb{P}_n\mathbb{P}_m$ methods. A possible explanation is that, for the standard method, the aliasing error accumulates in high-order Legendre modes as the solution evolves, while the $c\mathbb{P}_n\mathbb{P}_m$ method filters and reconstructs the high-order components, thus removing the aliasing error to some extent. It shows that the EOC of $c\mathbb{P}_n\mathbb{P}_m$ is guaranteed by the convergence-order of low-order components and the reconstructor.

As before, the computational efficiency is compared in terms of the number of normalized flops in \cref{fig:cost_isen_vortex_roe_llf} for $3^{th}$- and $4^{th}$-order methods. Both $c\mathbb{P}_1\mathbb{P}_3$ and $c\mathbb{P}_1\mathbb{P}_4$ methods are comparable to their standard counterparts, especially for $4^{th}$-order methods. It should be noted that the benefits of reconstruction are less evident for inviscid problems. In general, the more complex the original method, the more pronounced the advantages of reconstruction become. This effect will be illustrated below through simulations of two-dimensional viscous flows.

\subsection{Two-dimensional decaying homogeneous isotropic turbulence}

In this last section, we explore a compressible Navier-Stokes case. The two-dimensional decaying homogeneous isotropic turbulence (DHIT) is a classical incompressible flow problem in which the kinetic energy decays as time evolves. We simulate the flow in a square $[0,2\pi]^2$ and periodic boundary conditions are applied and follow San and Staples \cite{san_high-order_2012} for the initialization process. The vorticity distribution in the Fourier space, $\hat{\omega}(k)$, is first initialized based on the assumed initial energy spectrum $E_k$:
\begin{equation}
\label{eq:Ek}
    E(k) = \frac{a_s}{2}\frac{1}{k_p}\left( \frac{k}{k_p}\right)^{2s+1}\exp{\left[ -\left( s + \frac{1}{2}\right) \left( \frac{k}{k_p}\right)^2 \right]},
\end{equation}
where $k = \vert \bm{k}\vert = \left(k_x^2+k_y^2\right)^{1/2}$. The maximum value of the initial energy spectrum occurs at the wavenumber $k_p$ which is assumed to be $k_p = 4$ here. The coefficient $a_s$ normalizes the initial kinetic energy and is given by taking $s=3$:
\begin{equation}
    a_s = \frac{(2s + 1)^{s+1}}{2^ss!}.
\end{equation}
The initial physical velocity is recovered by using the vorticity-stream function method and taking the Fourier transform. The initial conditions of density and pressure are set based on the Mach number:
\begin{equation}
    \rho(x,y,0)=1,\; p(x,y,0)=\frac{1}{\gamma M^2_{\infty}},
\end{equation}
where $\gamma=1.4 $ and $M_{\infty}\approx0.1$. The Reynold number based on the Taylor microscale $\lambda$ is set to ${Re}_{\lambda}\approx 60$. 

\begin{figure}[tbhp]
\centering
    \includegraphics[trim=0cm 0cm 0cm 0cm, clip, width=1.0\textwidth]{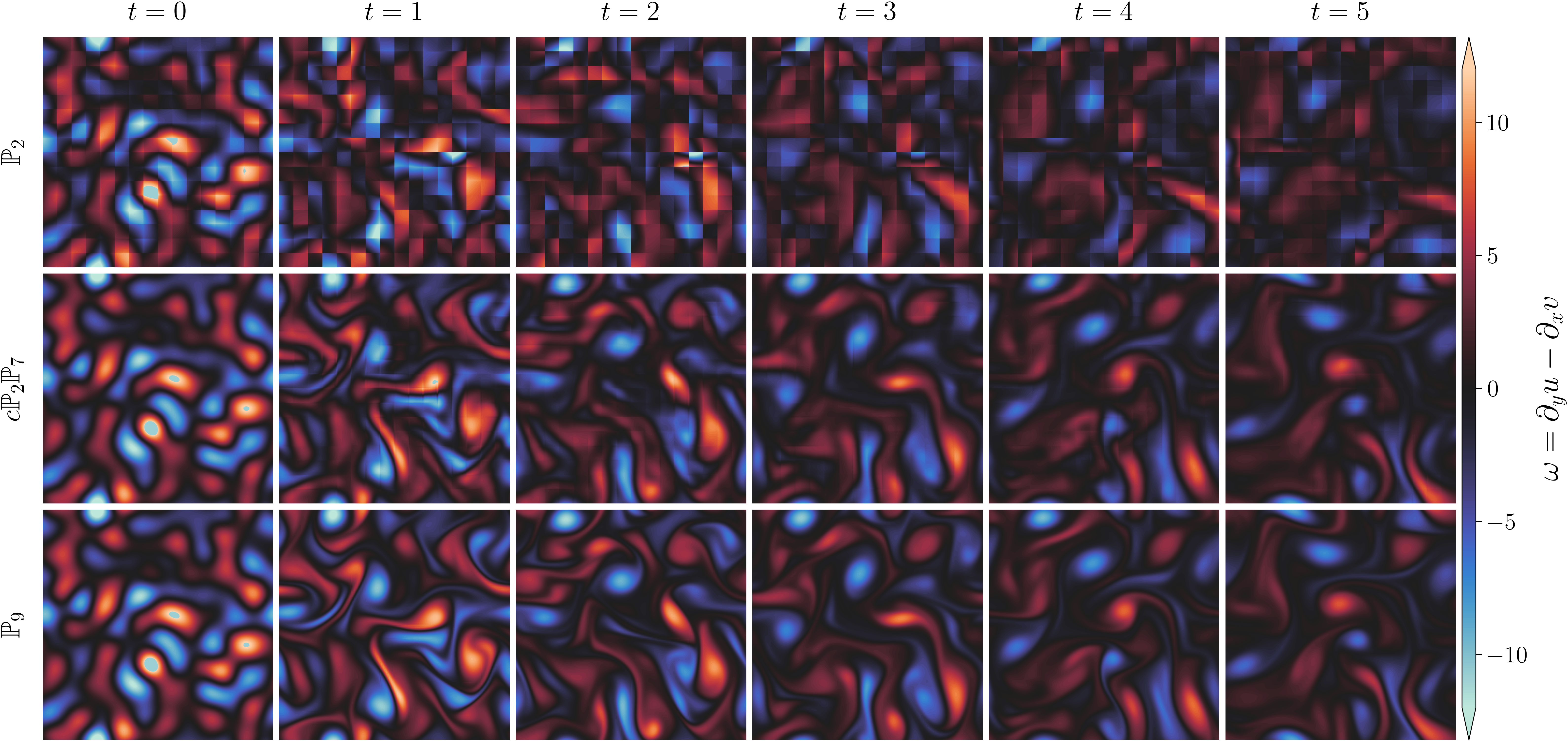}  \\
    \caption{The vorticity distributions of different methods at typical moments.}
\label{fig:vorticity_all}
\end{figure}

We run the simulation on a uniform mesh size of $16^2$ with different polynomial orders. The LLF and BR1 schemes are adopted to compute the inviscid numerical flux and viscous terms, respectively. The results of $9^{th}$- and $2^{nd}$-order standard DGSEM-LGL are considered as the high- and low-order solutions, respectively. Different $c\mathbb{P}_n\mathbb{P}_m$ methods ($n=2,m=5,6,7$) are used to simulate the same flow. To give an intuitive understanding of the enhancing effects of the $c\mathbb{P}_n\mathbb{P}_m$ method, the vorticity distributions of standard $\mathbb{P}_2$, $\mathbb{P}_9$, and $c\mathbb{P}_2\mathbb{P}_7$ at multiple moments are compared in \cref{fig:vorticity_all}. Considering that a mesh composed of $16^2$ $\mathbb{P}_2$ elements is relatively coarse for resolving the turbulent structures, the resulting evolution of vorticity is highly dissipative and inaccurate compared with the reference solution obtained using the standard $\mathbb{P}_9$ method. A large number of detailed turbulence eddies are lost. In contrast, the $c\mathbb{P}_2\mathbb{P}_7$ method accurately captures the evolution of the vorticity field over time and recovers a larger number of turbulent structures.

\begin{figure}[tbhp]
\centering
    \subfloat[$\rho u$]{\label{fig:error_evolution_rhou}\includegraphics[trim=0 0 0 10, clip, width=0.7\textwidth]{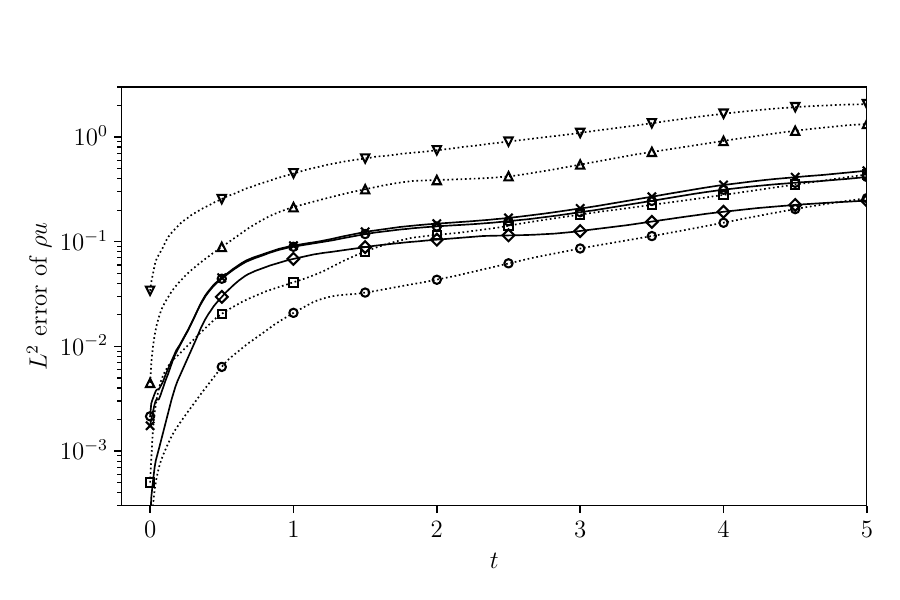}} \\
    \subfloat[$\rho v$]{\label{fig:error_evolution_rhov}\includegraphics[trim=0 0 0 10, clip, width=0.7\textwidth]{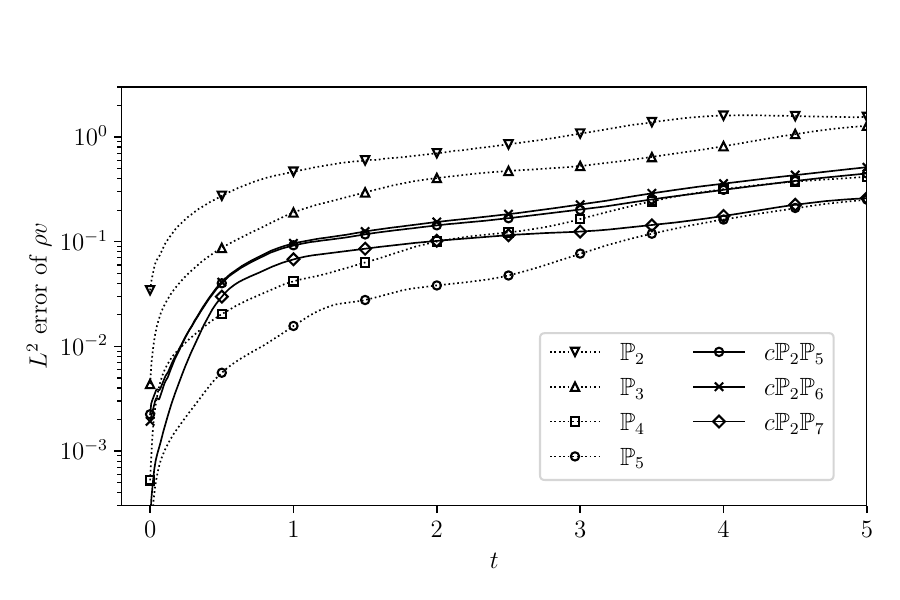}}
    \caption{The $L^2$-error evolutions of the standard DGSEM-LGL methods and the $c\mathbb{P}_n\mathbb{P}_m$ methods for the two dimensional DHIT problem (the markers are shown every time interval equals to 0.5).}
\label{fig:error_evolution_rhou_and_rhov}
\end{figure}

To quantitatively compare the accuracy of different methods, the $L^2$-error evolutions of $\rho u$ and $\rho v$ are plotted in \cref{fig:error_evolution_rhou_and_rhov}. Here, three $c\mathbb{P}_n\mathbb{P}_m$ schemes are studied ($n=2,\;m=5,6,7$). It can be seen that the $c\mathbb{P}_2\mathbb{P}_5$ and $c\mathbb{P}_2\mathbb{P}_6$ methods show an error level similar to the standard $\mathbb{P}_4$ method, and the error of $c\mathbb{P}_2\mathbb{P}_7$ is even lower and comparable to that of the standard $\mathbb{P}_5$ method. In addition, the evolutions of the kinetic energy (KE) are compared in \cref{fig:KE_t0-5_p2px}, where it shows that the $c\mathbb{P}_2\mathbb{P}_7$ method dissipates the KE at a rate similar to that of the reference solution. We also check the energy spectrum at $t=5$ in \cref{fig:Ek_p2px_t=5}, where the energy spectrum in the inertial range flattens towards the classical scaling $k^{-3}$, in agreement with the Kraichnan–Batchelor–Leith (KBL) theory of two-dimensional turbulence\cite{kraichnan_inertial_1967,batchelor_computation_1969,leith_atmospheric_1971}. The energy spectrum of the standard $\mathbb{P}_2$ method differs from that of the reference solution, while the $c\mathbb{P}_2\mathbb{P}_7$ method accurately captures the distribution. The vertical dashed line in \cref{fig:Ek_p2px_t=5} denotes the cut-off wavenumber of 4 points per wavelength (ppw) \cite{gassner_comparison_2011}.

\begin{figure}[tbhp]
\centering
    \subfloat[]{\label{fig:KE_t0-5_p2px}\includegraphics[trim=0 0 0 10, clip, width=0.5\textwidth]{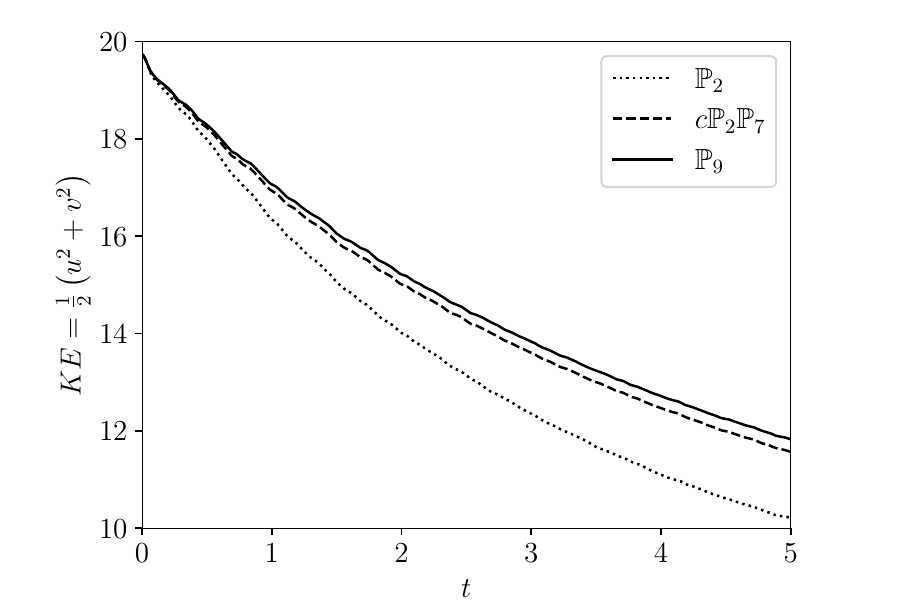}} 
    \subfloat[]{\label{fig:Ek_p2px_t=5}\includegraphics[trim=0 15 15 10, clip, width=0.5\textwidth]{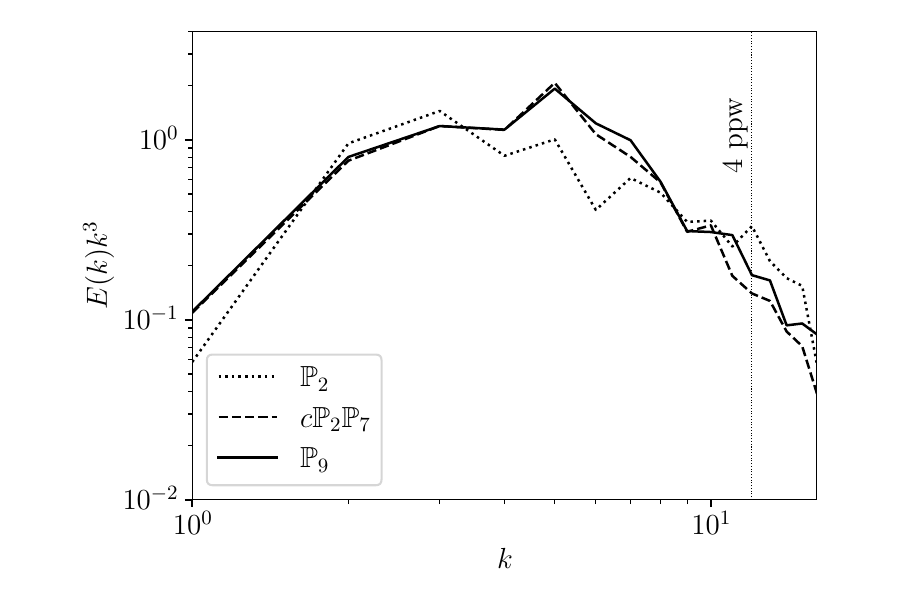}}
    \caption{(a) The evolutions of kinetic energy(KE); (b) the energy spectrum at $t=5$.  }
\label{fig:KE_and_Ek}
\end{figure}

\begin{table}[h]
{\scriptsize
  \caption{The simulation details of different methods for the two-dimensional DHIT problem}  \label{tab:info_p2px}
\centering
\begin{center}
\begin{tabular}{c|cccccc|ccc} % 12 columns total
\hline
% Row 1: Spans 2, then 5, then 2 = 9 total
\multicolumn{1}{c|}{methods} & 
\multicolumn{6}{c|}{standard DGSEM-LGL} & 
\multicolumn{3}{c}{$c\mathbb{P}_n\mathbb{P}_m$} \\ \hline

% Row 2: 9 unequal columns (content determines width)
notation & $\mathbb{P}_2$ & $\mathbb{P}_3$ & $\mathbb{P}_4$ & $\mathbb{P}_5$ & $\mathbb{P}_6$ & $\mathbb{P}_7$ & $c\mathbb{P}_2\mathbb{P}_5$ & $c\mathbb{P}_2\mathbb{P}_6$ & $c\mathbb{P}_2\mathbb{P}_7$ \\
$n$ & 2 & 3 & 4 & 5 & 6 & 7 & 2 & 2 & 2 \\ 
$m$ & - & - & - & - & - & - & 5 & 6 & 7 \\ 
$\Delta t(\times10^{-3})$ & 2.0 & 1.0 & 1.0 & 1.0 & 0.5 & 0.5 & 2.0 & 2.0 & 2.0 \\ 
CFL & 0.31 & 0.22 & 0.28 & 0.34 & 0.2 & 0.23 & 0.31 & 0.31 & 0.31 \\ 
CPU time $[s]$ & 8.63 & 21.5 & 42.1 & 50.1 & 125.4 & 151.7 & 17.4 & 19.1 & 19.6 \\
CPU time per step $[ms]$ & 3.45 & 4.30 & 8.43 & 10.0 & 12.5 & 15.2 & 6.96 & 7.65 & 9.25 \\
normalized CPU time $[s]$ & 2.67 & 4.67 & 11.8 & 17.1 & 25.24 & 35.3 & 5.39 & 5.92 & 6.08 \\ \hline
\end{tabular}
\end{center}
}
\end{table}

Finally, the computational costs of the standard methods and the $c\mathbb{P}_n\mathbb{P}_m$ methods are compared in \cref{tab:info_p2px}, where the sizes of $\Delta t$ are also given. Since the CFL number is slightly different, the normalized CPU time (CPU time $\times$ CFL) is used to measure the efficiency of different methods. It can be seen that, for the standard DGSEM-LGL methods, the computational cost rises significantly as the  polynomial order increases. Regarding the $c\mathbb{P}_n\mathbb{P}_m$ methods, increasing $m$ will not lead to considerable overhead in cost. As mentioned above, the advantage of $c\mathbb{P}_n\mathbb{P}_m$ is more pronounced when viscous terms are involved in the simulation. From \cref{fig:error_evolution_rhou_and_rhov} and \cref{tab:info_p2px}, we can find that the $c\mathbb{P}_2\mathbb{P}_5$ methods achieve the same error level as the standard $\mathbb{P}_4$ method with only about half the normalized CPU time ($5.39s$ and $11.8s$, respectively). Similarly, the $c\mathbb{P}_2\mathbb{P}_7$ method obtains a solution close to that of the standard $\mathbb{P}_5$ method, only using $6.08s$ instead of the latter's $17.1s$. Therefore, we can conclude that the standard $\mathbb{P}_4$ and $\mathbb{P}_5$ methods are accelerated by about $\times2.2$ and $\times2.8$, respectively, and these results demonstrate the great potential of the $c\mathbb{P}_n\mathbb{P}_m$ method when simulating viscous flow on under-resolved meshes.

\section{Conclusion}
This work introduced a corrected $\mathbb{P}_n\mathbb{P}_m$ (shortened to $c\mathbb{P}_n\mathbb{P}_m$) framework for DGSEM-LGL discretizations that recovers high-order accuracy from low-order evolved degrees of freedom for sufficiently smooth solutions. The method is based on the separation of scales between components of low- and high-order. At the continuous level, the evolution of the low-order modes was shown to depend on the projected flux and the interfacial numerical flux. At the discrete level, applying the projection to the DGSEM-LGL semi-discretization revealed an additional correction term associated with the highest retained Legendre mode. This term compensates for the inexactness of LGL quadrature in that mode and is essential to preserve the designed order of accuracy.

The corrected evolution was coupled with a compact projection-based reconstruction operator, which yielded the proposed cPnPm scheme. In contrast to reconstruction strategies that solve enlarged constrained least-squares problems, the present reconstruction computes the high-order components from a smaller local system and can be precomputed in matrix form. Except for the reconstruction step, the scheme evaluates the spatial operators on the low-order LGL nodes, reducing the number of evolved degrees of freedom and lowering memory requirements.

The convergence analysis, formulated through an auxiliary reconstructed scheme, shows that the $c\mathbb{P}_n\mathbb{P}_m$ approximation achieves the optimal $m+1^{th}$ order under the stated smoothness, stability and reconstruction assumptions. The  error estimates derived allow for the identification of the main contributions from the projected evolution and the reconstruction error, and we include  numerical experiments to verify these estimates. Tests ranging from one-dimensional linear advection and nonlinear scalar problems to two-dimensional Euler and viscous flow simulations confirm the expected convergence rates. In the 2D decaying homogeneous isotropic turbulence case, the $c\mathbb{P}_n\mathbb{P}_m$ schemes reproduce accuracy levels comparable to higher-order DGSEM-LGL discretizations while requiring substantially less normalized CPU time in the reported setup.

These results indicate that the $c\mathbb{P}_n\mathbb{P}_m$ framework is a promising strategy for smooth, under-resolved flow simulations in which memory bandwidth and high-order operator costs are limiting factors. Future work should extend the framework to nonlinear reconstruction for shock-capturing, positivity-preserving and entropy-stable formulations, curved and complex geometries, and three-dimensional large-scale simulations.

\section*{Acknowledgments}
Xukun Wang acknowledges the financial support of the China Scholarship Council(CSC, project number: 202406290060).

Esteban Ferrer and Oscar Marino acknowledge the funding from the European Union (ERC, Off-coustics, project number 101086075). Views and opinions expressed are, however, those of the authors only and do not necessarily reflect those of the European Union or the European Research Council. Neither the European Union nor the granting authority can be held responsible for them.

Esteban Ferrer acknowledges the funding received by the Grant DeepCFD (Project No. PID2022-137899OB-I00) funded by MICIU/AEI/10.13039/501100011033 and by ERDF, EU.

\bibliographystyle{siamplain}
\bibliography{references}

\newpage
\appendix

\section{Detailed spatio-temporal discretisation}
\label{sec: AppendixA}

Generally, for the conservation law in \cref{eq:1d_conservation_law}, DG method approximates $\vec{q}$ in a piecewise polynomial space:
\begin{equation}
    \mathcal{V}_h^n := \left\{ v \in L^2(\Omega)\; \vert \; v\vert_{\Omega_i} \in \mathbb{P}_n(\Omega_i), \forall \Omega_i \in \mathcal{T}_h \right\},
\end{equation}
where $\mathcal{T}_h$ is a tessellation of the domain $\Omega$ into non-overlapping $K$ elements: $\Omega_i = [x_{i-1},x_{i}],i=1,2,\dots,K$(mesh); $\mathbb{P}_n(\Omega_i)$ is the space of polynomials of degree $\leq n$ on $\Omega_i$, and $\{x_i\}_{i=0}^K$ denote the location of faces of elements.

In practice, DG solves the equation in a reference element, $E = [-1,1]$, which is geometrically transformed from a physical element $\Omega_{i}= [x_{i-1},x_i]$ through a transfinite mapping:
\begin{equation}
\label{eq: geo_mapping}
    \xi = \xi(\Omega_i,x) = \frac{2}{\Delta x_i}\left( x - \frac{x_{i-1}-x_{i}}{2}\right),
\end{equation}
where $\Delta x_i = x_i - x_{i-1}$ is the length of the element $\Omega_{i}$. The transformation is applied to \cref{eq:1d_conservation_law} in $\Omega_i$, with the result that the following:
\begin{equation}
    \label{eq:1dConservationTransformed}
    \mathcal{J}_i\partial_t\vec{q} + \partial_{\xi}\vec{f}(\vec{q})=0,
\end{equation}
where $\mathcal{J}_i = \partial x/\partial \xi$ is the Jacobian of the inverse transfinite mapping and equals $\Delta x_i/2$.

Following the standard process of DG, we transfer the problem of solving the strong differential form of equation to the weak form by taking the inner product of \cref{eq:1dConservationTransformed} with the test function $\phi$, applying the integration by parts and replacing the boundary flux by a numerical one $\vec{f}^{\ast}$, which leads to:
\begin{equation}
\label{eq:weak_form}
    \mathcal{J}_i\langle \partial_t\vec{q}\,,\phi \rangle + \vec{f}^{\ast}\phi\Big\vert_{-1}^1-\langle\vec{f}\,,\phi_{\xi}\rangle = 0,
\end{equation}
where $\langle \cdot\,,\cdot\rangle$ denotes the inner product of two functions defined in $E$. Equivalently, the strong form can be obtained by integrating the last term in Equation \cref{eq:weak_form} by parts:

In Gassner's nodal DGSEM\cite{gassner_comparison_2011,gassner_skew_symmetric_2013}, the test functions are taken as the Lagrange interpolates on $n+1$ Legendre-Gauss-Lobatto (LGL) nodes, $\{\ell_j\}_{j=0}^n$, and the integrations are approximated using $n^{th}$-order LGL quadrature:
\begin{equation}
\label{eq:inner_product_form}
    \mathcal{J}_i\langle \partial_t\vec{q}\,,\ell_j \rangle_{n} + \left(\vec{f}^{\ast}-\vec{f}\right)\ell_j\Big\vert_{-1}^1+\langle\vec{f}_{\xi}\,,\ell_j\rangle_n = 0,\;j=0,1,\dots,n,
\end{equation}
where $\langle \cdot\,,\cdot\rangle_n$ denotes the discrete inner product approximated by the $n^{th}$-order LGL quadrature. Taking one component of the conservative system, $u$, for example, the $n^{th}$-order Gassner's nodal DGSEM-LGL can be written in the matrix form:
\begin{equation}  
\mathcal{J}_i\underline{\dot{u}}+\underline{\underline{M}}^{-1}\underline{\underline{B}}(\underline{f}^{\ast} - \underline{f})+\underline{\underline{D}}\,\underline{f} = 0,
\end{equation}
where $\underline{u}=[u_0,\,u_1,\dots,u_n]^{\top}\in \mathbb{R}^{n+1}$ and $\underline{f}=[f_0,\,f_1,\dots,f_n]^{\top}\in \mathbb{R}^{n+1}$ are the vectors of the nodal values of the solution and the corresponding flux in the element of $n^{th}$-order. $\underline{\underline{M}} = diag([\omega_0,\omega_1,\dots,\omega_n])$ is the so-called mass matrix (diagonal matrix of GL quadrature weights $\omega_i$). $\underline{\underline{B}} = diag([-1,\,0,\dots,0,\,1])$, $\underline{f}^{\ast} = [f^{\ast}_L,\, 0,\dots,\,0,\,f^{\ast}_R]^{\top}$ and $\underline{\underline{D}}$ is the derivative matrix defined as
\begin{equation}
\label{eq:D}
    \left[\underline{\underline{D}}\right]_{ij} = \frac{\partial \ell_j(\xi_i)}{\partial\xi}.
\end{equation}
$f^{\ast}_L$ and $f^{\ast}_R$ denote the numerical flux on the left and right boundaries, respectively. Generally, the second term is referred to as the surface contribution, while the third term is considered as the volume contribution.  

Alternatively, the surface term, $\underline{\underline{M}}^{-1}\underline{\underline{B}}(\underline{f}^{\ast} - \underline{f})$, can be reformulated as the scalar product between a real number and a vector:
\begin{equation}
    \underline{\underline{M}}^{-1}\underline{\underline{B}}(\underline{f}^{\ast} - \underline{f}) = \frac{(f^{\ast}_L - f_0)}{\omega_0}\underline{b}_L + \frac{(f^{\ast}_R - f_n)}{\omega_n}\underline{b}_R,
\end{equation}
where $\underline{b}_L = [-1,\,0,\dots,0]^{\top}$ and $\underline{b}_R = [0,\dots,0,\,1]^{\top}$, which leads to a more explicit form of Equation \cref{eq:DGSEM-LGL_0}:
\begin{equation}
    \mathcal{J}_i\underline{\dot{u}}+\frac{(f^{\ast}_L - f_0)}{\omega_0}\underline{b}_L + \frac{(f^{\ast}_R - f_n)}{\omega_n}\underline{b}_R+\underline{\underline{D}}\,\underline{f} = 0.
\end{equation}

\section{Details on discrete operators}
\label{sec: AppendixB}

\subsection{Transformation operators}
\label{sec: AppendixB1}
Considering that $\underline{u}$ can be expanded by series of Legendre polynomials $\{L_i \}_{i=0}^n$:
\begin{equation}
\label{eq:modal_index}
    u_i = \sum_{j=0}^n\hat{u}_jL_j(\xi_i),
\end{equation}
the transformation operator between nodal and modal space can be represented by the Vandermonde matrix of Legendre polynomials $\underline{\underline{V}}\in \mathbb{R}^{(n+1)\times (n+1)}$ and its inverse:
\begin{align}
    \underline{u} &= \underline{\underline{V}}\,\underline{\hat{u}}\\
    \underline{\hat{u}} &= \underline{\underline{V}}^{-1}\,\underline{u},
\end{align}
where $\underline{\hat{u}} = [\hat{u}_0,\,\hat{u}_1,\dots,\hat{u}_n]^{\top}\in \mathbb{R}^{n+1}$ is the vector of the modal coefficients of the Legendre polynomials. The Vandermonde matrix $\underline{\underline{V}}$ can be written as:
\begin{equation}
    \underline{\underline{V}} = [\underline{v}_0,\,\underline{v}_1,\dots, \underline{v}_n],\;\underline{v}_i = [L_i(\xi_0),\,L_i(\xi_1),\dots, L_i(\xi_n)]^{\top},
\end{equation}
where $\underline{v}_i$ denotes the column vector of the nodal values of $i^{th}$ Legendre modes. Alternatively, the Vandermonde matrix can also be written using the composition of row vectors, $\underline{r}_i^{\top}$
\begin{equation}
\underline{\underline{V}} = [\underline{r}_0,\,\underline{r}_1,\dots, \underline{r}_n]^{\top},\;
    \underline{r}_i^{\top} = [L_0(\xi_i),\,L_1(\xi_i),\dots, L_n(\xi_i)]^{\top}.
\end{equation}

However, the inverse operation is inconvenient to compute, and it can be written more explicitly. Considering that the Legendre polynomials are orthogonal to each other,
\begin{equation}
    \langle L_i\,,L_j\rangle = \delta_{ij}\hat{\omega}_i,
\end{equation}
where $\hat{\omega}_i := \Vert L_i\Vert_{L^2}^2 = {2}/{(2i+1)}$ denotes the square of the $L^2$-norm of the $i^{th}$-order Legendre polynomial, and the discrete orthogonality can be approximated by Gauss quadrature rules
\begin{equation}
\underline{\underline{V_G}}^{\top}\underline{\underline{M_G}}\,\underline{\underline{V_G}} = \underline{\underline{\hat{M}}}
\end{equation}
where $\underline{\underline{\hat{M}}} = diag([\hat{\omega}_0,\,\hat{\omega}_1,\dots,\hat{\omega}_n])$ is the so-called modal mass matrix (diagonal matrix of the square of the $L^2$-norm) and the sub-scripts $G$ denote the matrices corresponding to the Gauss quadrature. However, the LGL quadrature is exact only for polynomials up to $2n-1$ order, and thus the discrete integral of the square of the highest order Legendre polynomial is no longer equal to the analytic value
\begin{equation}
\label{eq:vpTWvp}
    \underline{v}_n^{\top}\underline{\underline{M}}\,\underline{v}_n \neq \hat{\omega}_n = \frac{2}{2n+1}.
\end{equation}
Noting that the weights of LGL-quadrature are
\begin{equation}
    \omega_i = \frac{2}{n(n+1)L_n^2(\xi_i)},\;i=0,1,\dots,n,
\end{equation}
substituting this expression into \cref{eq:vpTWvp} leads to
\begin{equation}
    \underline{v}_n^{\top}\underline{\underline{M}}\,\underline{v}_n = \sum_{i=0}^nL_n^2(\xi_i)\omega_i = \frac{2}{n}.
\end{equation}
Therefore, if we define the modified modal mass matrix $\underline{\underline{\hat{M}^{\prime}}}=diag([\hat{\omega}_0^{\prime},\hat{\omega}_1^{\prime},\dots,\hat{\omega}_n^{\prime}])$ by copying $\underline{\underline{\hat{M}}}$ except that the last element $\hat{w}_n = 2/(2n+1)$ is replaced by $\hat{\omega}_n^{\prime} = 2/n$:
\begin{equation}
    \hat{\omega}_i^{\prime}=\left\{ \begin{array}{rcl}
\hat{\omega}_i & \mbox{for}
& \leq i \leq n-1 \\ 
2/n & \mbox{for} & i=n,
\end{array}\right.
\end{equation}
then the following relation holds in the LGL nodes
\begin{equation}
\label{eq:VTWV=tildeN}\underline{\underline{V}}^{\top}\underline{\underline{M}}\,\underline{\underline{V}} = \underline{\underline{\hat{M}^{\prime}}},
\end{equation}
and the inverse of the Legendre Vandermonde matrix, $\underline{\underline{V}}$, can be formulated as:
\begin{equation}
\label{eq: inverse_Vandermonde}
    \underline{\underline{V}}^{-1} = \underline{\underline{\hat{M}^{\prime}}}^{-1}\underline{\underline{V}}^{\top}\underline{\underline{M}}.
\end{equation}
So far, the transform matrices in both directions are well defined.

\subsection{Projection and interpolation operators}\label{sec:appendixB}
Once we clarify the high- and low-order notations, we can define the projection and interpolation operators using the transform matrices defined in \cref{sec: AppendixB1}. Since $\underline{\underline{V}}^{(m)} = [\underline{v}_0^{(m)},\underline{v}_1^{(m)},\dots,\underline{v}_m^{(m)}]$, we can split it into two parts:
\begin{equation}
    \underline{\underline{V}}^{(m)} = \left[\overline{\underline{\underline{V}}^{(m)}} \;\Big| \;\widetilde{\underline{\underline{V}}^{(m)}}\right],
\end{equation}
where
\begin{equation}
\begin{split}
        \overline{\underline{\underline{V}}^{(m)}} :&= \left[ \underline{v}_0^{(m)},\underline{v}_1^{(m)},\dots, \underline{v}_n^{(m)}\right] \in \mathbb{R}^{(m+1)\times(n+1)}, \\
        \widetilde{\underline{\underline{V}}^{(m)}} :&= \left[ \underline{v}_{n+1}^{(m)},\underline{v}_{n+2}^{(m)},\dots, \underline{v}_{m}^{(m)}\right] \in \mathbb{R}^{(m+1)\times r}.
\end{split}
\end{equation}
Now, the projection operator from $\mathbb{R}^{m+1}$ to $\mathbb{R}^{n+1}$ can be defined as:
\begin{equation}
\label{eq:projection}    \underline{\underline{\Pi}}:=\underline{\underline{V}}^{(n)}\overline{\left[ \underline{\underline{V}}^{(m)} \right]^{-1}} = \underline{\underline{V}}^{(n)}\overline{\underline{\underline{V}}^{(m)}}^{+}\in \mathbb{R}^{(n+1)\times (m+1)},
\end{equation}
where $+$ denotes the Moore–Penrose inverse (or pseudoinverse) and we have:
\begin{equation}
    \overline{\underline{u}^{(m)}}:=\underline{\underline{\Pi}}\;\underline{u}^{(m)} = \underline{u}^{(n)}.
\end{equation}
Similarly, we can define the inverse operator (interpolation) as follows:
\begin{equation}    \underline{\underline{\Pi}}^{\ast}:=\overline{\underline{\underline{V}}^{(m)}}\left[ \underline{\underline{V}}^{(n)} \right]^{-1} \in \mathbb{R}^{(m+1)\times (n+1)}
\end{equation}
with the property: $\underline{\underline{\Pi}}\,\underline{\underline{\Pi}}^{\ast} = \underline{\underline{I}}^{(n)}$, where $\underline{\underline{I}}^{(n)}$ denotes the identity matrix of $n^{th}$-order. Thus, \cref{eq:u_decomposition} can be expressed in high-order nodes:
\begin{equation*}
    \underline{u}^{(m)} = \underline{\underline{\Pi}}^{\ast}\underline{u}^{(n)}+\underline{\tilde{u}}.
\end{equation*}

\section{Details on the derivative matrix}
\label{sec: AppendixC}
\subsection{The block structure of modal derivative matrix}
\label{sec: AppendixC1}
The $m^{th}$-order $\underline{\underline{\hat{D}}}^{(m)}$ and $\underline{\underline{\hat{Q}}}^{(m)}$ have the following block structure:
\begin{equation}
\label{eq:splitD}
    \underline{\underline{\hat{D}}}^{(m)} =\left[ \begin{array}{cc}
         \underline{\underline{\hat{D}}}^{(n)}& 2\left[\underline{\underline{\hat{M}}}^{(n)}\right]^{-1}\underline{\underline{\hat{R}}} \\
            %\hdashline
         & \underline{\underline{\widetilde{\hat{D}}}}   \\
    \end{array} \right],
\end{equation}
and
\begin{equation}
    \label{eq:splitQ}
    \underline{\underline{\hat{Q}}}^{(m)} =\left[ \begin{array}{ll}
         \underline{\underline{\hat{Q}}}^{(n)}& \underline{\underline{\hat{R}}} \\
            %\hdashline
         & \underline{\underline{\hat{Q}}}^{(r-1)}   \\
    \end{array} \right],
\end{equation}
where 
\begin{equation}
\label{eq:Rhat}
    \left[ \underline{\underline{\hat{R}}}\right]_{ij} = \left\{\begin{array}{cl}
         1&,n+i+j\; \text{is even};  \\ 
         0&, \text{else.}
    \end{array}
    \right.,\;0\leq i\leq n,\;0\leq j\leq r-1.
\end{equation}

\subsection{The structure of $\underline{\underline{\hat{R}}}$}
\label{sec: AppendixC2}
Considering that $\underline{\underline{\hat{R}}}$ can be formulated as a combination of the unit vector and the alternating sign vector
\begin{equation}
\label{eq:2R}
    2\underline{\underline{\hat{R}}} = \underline{1}^{(n)}{\underline{1}^{{(r-1)}}}^{\top} - \underline{\check{1}}^{(n)}{\underline{\tilde{\check{1}}}}^{\top},
\end{equation}
where the four ingredients come from the splitting of $\underline{1}^{(m)}$ and $\underline{\check{1}}^{(m)}$
\begin{equation*}
    \underline{1}^{(m)} = \left[ \begin{array}{l}
         \underline{1}^{(n)}  \\
         \underline{1}^{(r-1)} 
    \end{array} \right],\;
    \underline{\check{1}}^{(m)} = \left[ \begin{array}{l}
         \underline{\check{1}}^{(n)}  \\
         \underline{\tilde{\check{1}}}
    \end{array} \right],\,\underline{\tilde{\check{1}}}:=\underline{\check{1}}^{{(m)}}_{\setminus
    \underline{\check{1}}^{(n)}},
\end{equation*}
and 
\begin{align}
    \underline{1}^{(m)} &= [1,1,\dots,1]\in \mathbb{R}^{m+1},\\
    \underline{\check{1}}^{(m)} &= [1,-1,1,\dots]\in \mathbb{R}^{m+1}.
\end{align}
Replacing $\underline{1}^{(n)}$ and $\underline{\check{1}}^{(n)}$ with $\left[\underline{\underline{V}}^{(n)}\right]^{\top}{\underline{b}_R^{(n)}}$ and  $-\left[\underline{\underline{V}}^{(n)}\right]^{\top}{\underline{b}_L^{(n)}}$, respectively, and considering ${\underline{1}^{{(r-1)}}}^{\top}\underline{\tilde{\hat{f}}} = \tilde{f}_n$ and $\underline{\tilde{\check{1}}}^{\top}\underline{\tilde{\hat{f}}} = \tilde{f}_0$, we can obtain \cref{eq: 2Rf}.

\section{Two-dimensional formulation}
\label{sec: Appendix_2d_formulation}
The simplified formulation of two-dimensional projected high-order time-derivatives for variable $u$ reads:
\begin{equation}
\label{eq:2d_projected_ho_time_derivative}
\begin{split}
    \mathcal{J}_{ij}\overline{\dot{u}^{(m)}_{ij}} &+ \delta f^{(n,m)}\left(1,\eta_j^{(n)}\right)\frac{\ell_i^{(\xi,n)}(1)}{\omega_i^{(\xi,n)}} - \delta f^{(n,m)}\left(-1,\eta_j^{(n)}\right)\frac{\ell_i^{(\xi,n)}(-1)}{\omega_i^{(\xi,n)}}+\sum_{k=0}^nD^{(\xi,n)}_{ik}\overline{f^{(m)}_{kj}}\\
    &+\delta g^{(n,m)}\left(\xi_i^{(n)},1\right)\frac{\ell_j^{(\eta,n)}(1)}{\omega_j^{(\eta,n)}} - \delta g^{(n,m)}\left(\xi_i^{(n)},-1\right)\frac{\ell_j^{(\eta,n)}(-1)}{\omega_j^{(\eta,n)}}+\sum_{k=0}^nD^{(\eta,n)}_{jk}\overline{g^{(m)}_{ik}}\\
    &+ \frac{n+1}{2}\left[\delta f^{(n,m)}\left(1,\eta_j^{(n)}\right)-(-1)^n\delta f^{(n,m)}\left(-1,\eta_j^{(n)}\right)\right]L_{n}^{(\xi)}\left(\xi_i^{(n)}\right)\\
    &+ \frac{n+1}{2}\left[\delta g^{(n,m)}\left(\xi_i^{(n)},1\right)-(-1)^n\delta g^{(n,m)}\left(\xi_i^{(n)},-1\right)\right]L_{n}^{(\eta)}\left(\eta_j^{(n)}\right) = 0,
\end{split}  
\end{equation}
where
\begin{equation}
    \begin{split}
        \delta f^{(n,m)}\left(\pm1,\eta\right) &= \Pi^{(\eta)}\left(f^{\ast(m)}\left(  \pm1,\eta\right) - f^{(m)}\left(  \pm1,\eta\right)\right) \\
        \delta g^{(n,m)}\left(\xi,\pm1\right) &= \Pi^{(\xi)}\left(g^{\ast(m)}\left(  \xi,\pm1\right) - g^{(n,m)}\left( \xi,\pm1\right)\right),
    \end{split}
\end{equation}
where $f$ and $g$ denote the $\xi-$ and $\eta-$ directional flux, respectively; $\Pi^{(\xi)}$ and $\Pi^{(\eta)}$ denote the projections along the $\xi-$ and $\eta-$ directions, respectively; and the two dimensional projection is defined as $\overline{(\cdot)} := \Pi^{(\xi)} \otimes \Pi^{(\eta)}$.

\section{Validation of the formulations}

\subsection{One-dimensional case}

We verify the correction term in Equation\cref{eq:correction} using the numerical test where the simplest problem of linear advection with unit wave speed ($f=u$) is solved with periodic boundary conditions:

To simplify the problem, we solve it on one element with $m$- and $n^{th}$-order, respectively. The upwind Riemann solver is used to compute the numerical flux at the boundaries. Since the entire domain is a single element, it leads to the special case that $f^{\ast(m)}_L = f^{\ast(m)}_R=u_m^{(m)}$ and $f^{\ast(n)}_L = f^{\ast(n)}_R=u_n^{(n)}$. According to \cref{eq:correction}, the correction is reduced to
\begin{equation}
\label{eq:correction_linear_advection}
\begin{split}
    \underline{c}^{(n,m)} = &- \frac{\left(u_m^{(m)}-u_n^{(n)}\right)}{{\omega_0^{(n)}}}{\underline{b}_L^{(n)}} - \frac{\left(u_m^{(m)}-u_n^{(n)}\right)}{{\omega_n^{(n)}}}{\underline{b}_R^{(n)}} \\
    &- \frac{n+1}{2}\left[\left(u_m^{(m)}-{u_n^{(n)}}\right)-(-1)^{n}\left(u_m^{(m)}-{u_0^{(n)}}\right)\right] {\underline{v}_n^{(n)}}.
\end{split} 
\end{equation}

\begin{figure}[tbhp]
\centering
    \subfloat[$n=4,m=9$]{\label{fig:c_4_9}\includegraphics[width=0.45\textwidth]{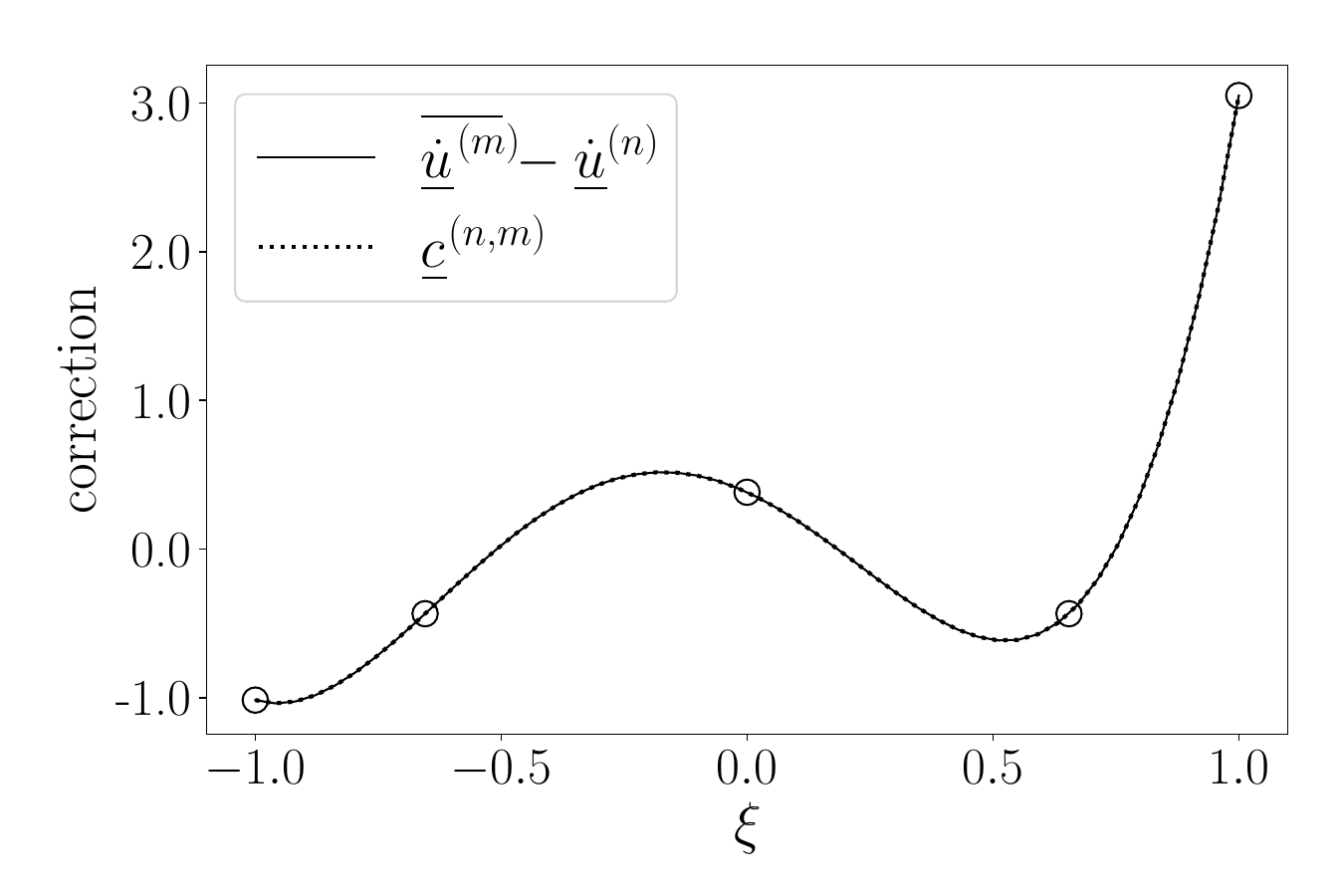}}
    \subfloat[$n=5,m=9$]{\label{fig:c_5_9}\includegraphics[width=0.45\textwidth]{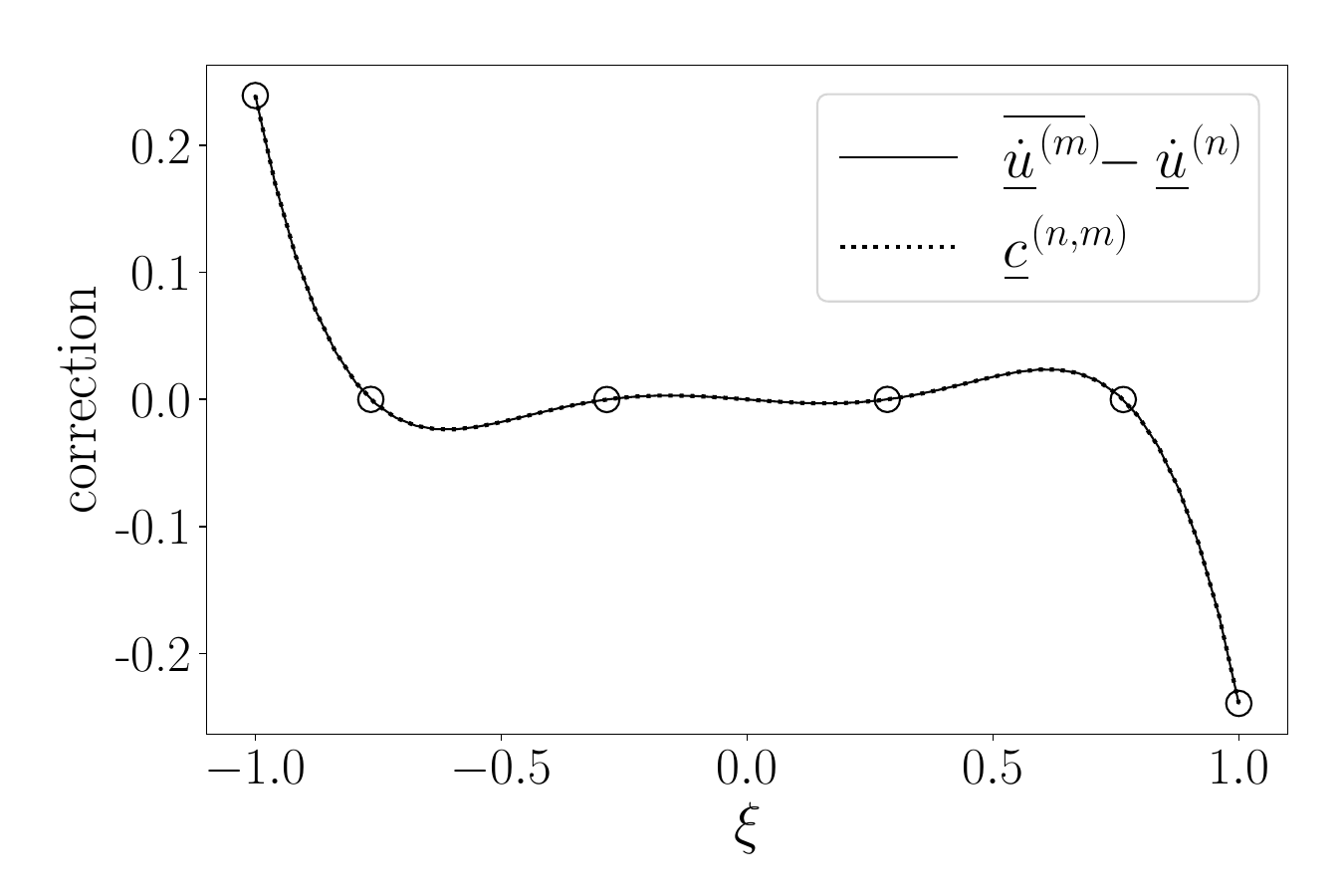}}
    \caption{The comparison between the corrections calculated from filtered high-order time derivative, $\overline{\underline{\dot{u}}^{(m)}}-\underline{\dot{u}}^{(n)}$, and the derived Equation\cref{eq:correction_linear_advection} for $\underline{c}^{(n,m)}$ at $t=0$. The high polynomial order $m$ is fixed to 9 while the low polynomial order $n$ is set to (a) $n=4$ and (b) $n=5$. The circle labels (o) denote the nodal values of the correction on LGL nodes.}
\label{fig:c_4_9_and_c_5_9}
\end{figure}

To verify the correctness of \cref{eq:correction_linear_advection} and thus \cref{eq:correction}, we compare the corrections at $t=0$ calculated from the projected high-order time derivative $\overline{\underline{\dot{u}}^{(m)}}-\underline{\dot{u}}^{(n)}$, and the Equations we derived before. The high polynomial order is fixed to $m=9$, and the corrections on low polynomial order $n=4$ and $n=5$, i.e., $\underline{c}^{(4,9)}$ and $\underline{c}^{(5,9)}$, are shown in \cref{fig:c_4_9_and_c_5_9}. It shows that the result of our derived \cref{eq:correction_linear_advection} is identical to the true one, proving that our formulations are correct. In addition, the concentration of correction near the boundaries is also clearly observed.

\subsection{Two-dimensional case}
Similarly, we verify the correctness of \cref{eq:2d_projected_ho_time_derivative} using a manufactured two-dimensional Euler solution:
\begin{equation}
    \begin{split}
        \rho(x,y,t) &= 2 +\frac{1}{10}\sin\left( \pi (x+y-t)\right),\\
        u(x,y,t) &= 1,\\
        v(x,y,t) &= 1,\\
        E(x,y,t) &= \left(2 +\frac{1}{10}\sin\left( \pi (x+y-t)\right)\right)^2,
    \end{split}
\end{equation}
with the corresponding forcing terms:
\begin{equation}
    \begin{split}
        s_{\rho}(x,y,t) &= c_1 \cos\left( \pi (x+y-t)\right),\\
        s_{\rho u }(x,y,t) &= c_2 \cos\left( \pi (x+y-t)\right) + c_3 \sin\left( 2\pi (x+y-t)\right),\\
        s_{\rho v }(x,y,t) &= c_2 \cos\left( \pi (x+y-t)\right) + c_3 \sin\left( 2\pi (x+y-t)\right),\\
        s_{E }(x,y,t) &= c_4 \cos\left( \pi (x+y-t)\right) + c_5 \sin\left( 2\pi (x+y-t)\right),
    \end{split}
\end{equation}
where $c_1 = \frac{1}{10}\pi$, $c_2 = \frac{1}{10}(3\gamma-2)\pi$, $c_3 = \frac{1}{100}(\gamma-1)\pi$, $c_4 = \frac{1}{5}(3\gamma-2)\pi$ and $c_5 = \frac{1}{100}(2\gamma-1)\pi$. 

The problem is solved on a single element using the upwind Riemann solver, and the periodic boundary conditions are applied. The high and low-orders are set equal to $9$ and $5$, respectively. The projected high-order and low-order time derivatives of density are shown in \cref{fig:dQdt_9_5c} and \cref{fig:dQdt_9_5_2d}, respectively. By subtracting $\dot{\rho}^{(n)}$ from $\overline{\dot{\rho}^{(m)}}$, the true correction can be computed, and it is plotted in \cref{fig:correction_true_2d}. It can be clearly seen that the correction is concentrated on the element boundary. Alternatively, the correction derived from \cref{eq:2d_projected_ho_time_derivative} is shown in \cref{fig:c_nm_2d}, which is exactly identical to the true correction.

\begin{figure}[htbp]
    \centering
    % Subfigure (a)
    \subfloat[$\overline{\dot{\rho}^{(m)}}$]{\label{fig:dQdt_9_5c}\includegraphics[trim=0 10 25 0, clip, width=0.4\textwidth]{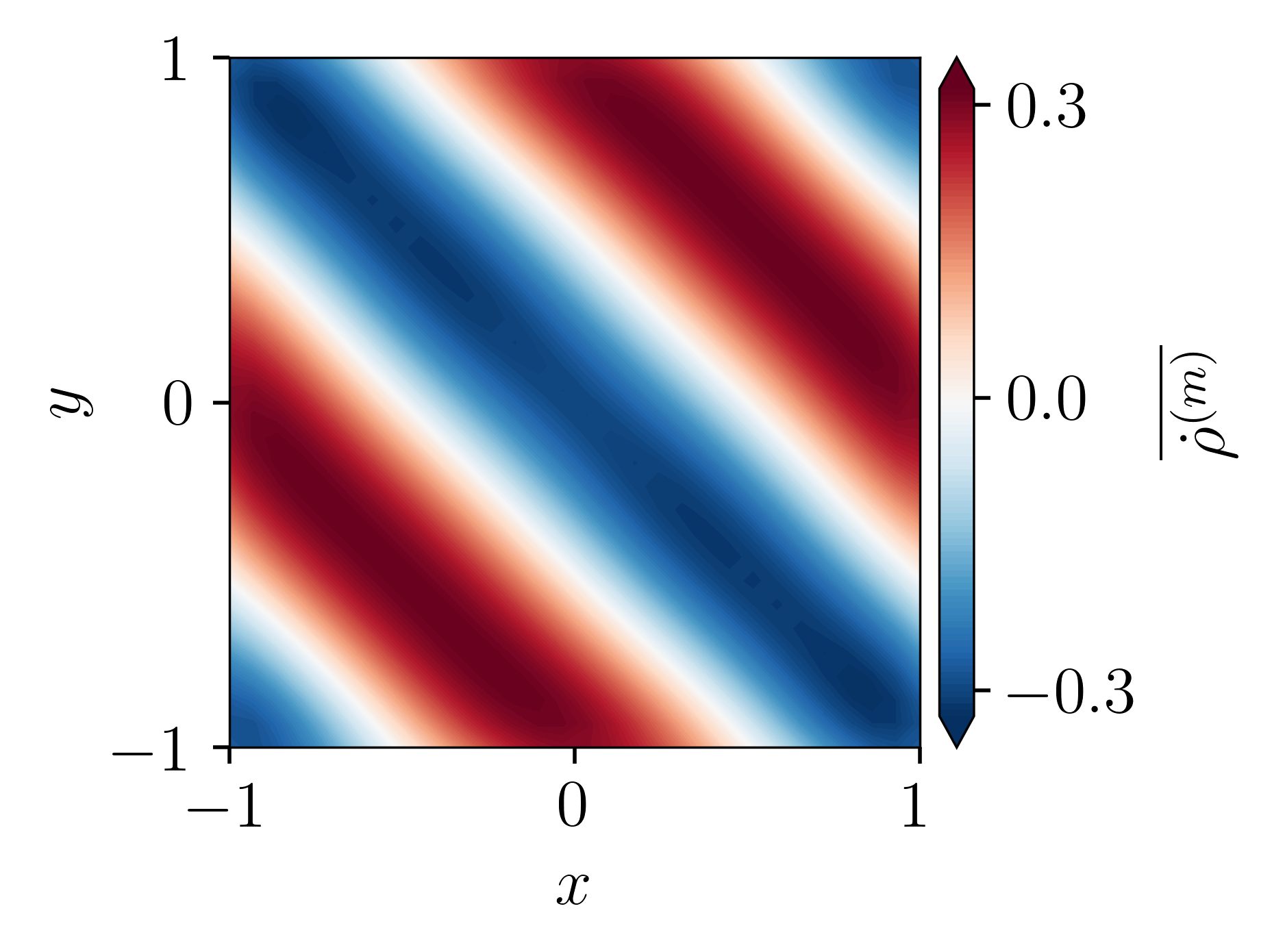}}
    % Subfigure (b) 
    \subfloat[$\dot{\rho}^{(n)}$]{\label{fig:dQdt_9_5_2d}\includegraphics[trim=0 10 25 0, clip, width=0.4\textwidth]{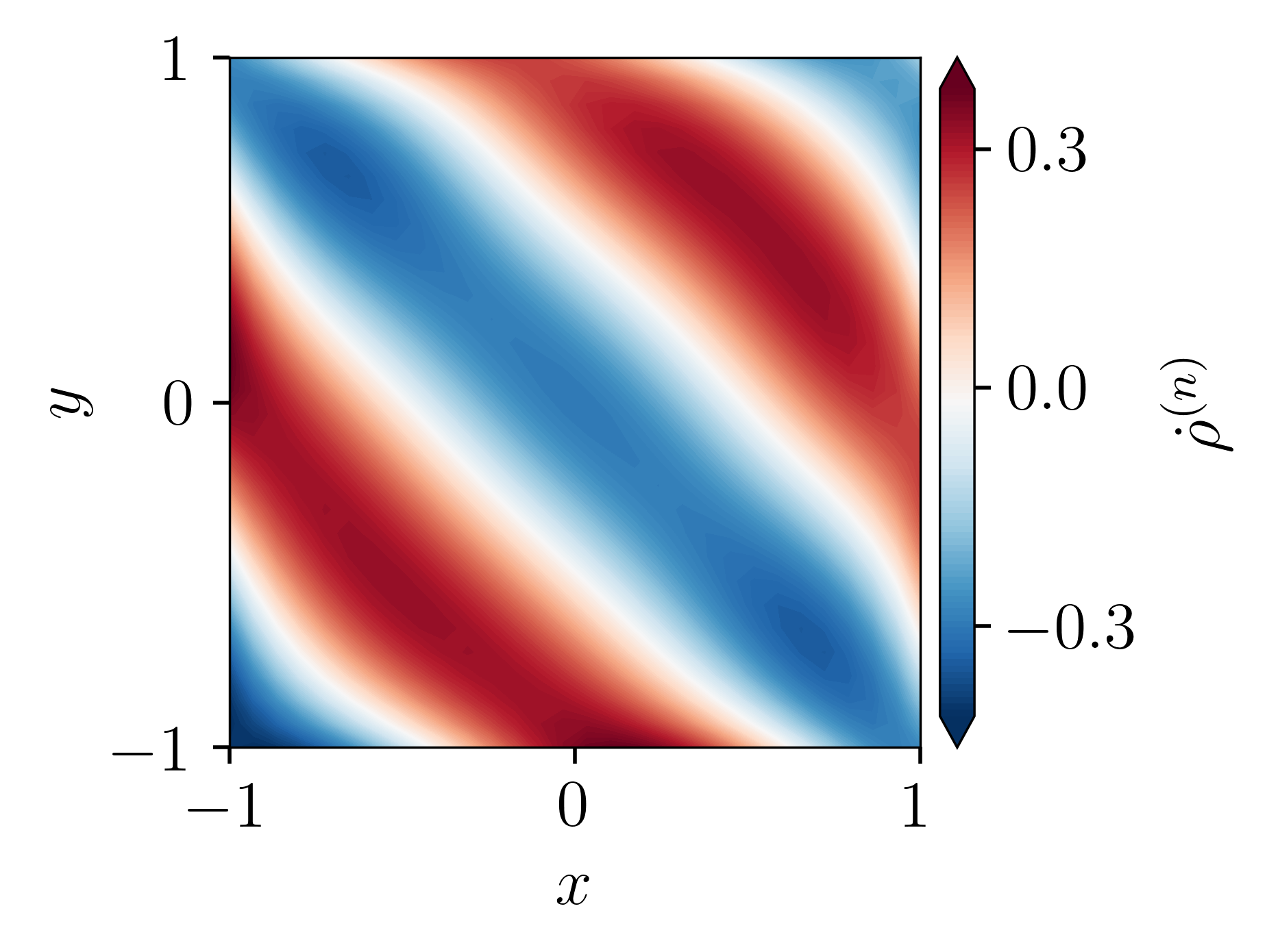}}\\
    % Subfigure (c)
    \subfloat[$\overline{\dot{\rho}^{(m)}} - \dot{\rho}^{(n)}$]{\label{fig:correction_true_2d}\includegraphics[trim=0 10 25 0, clip, width=0.4\textwidth]{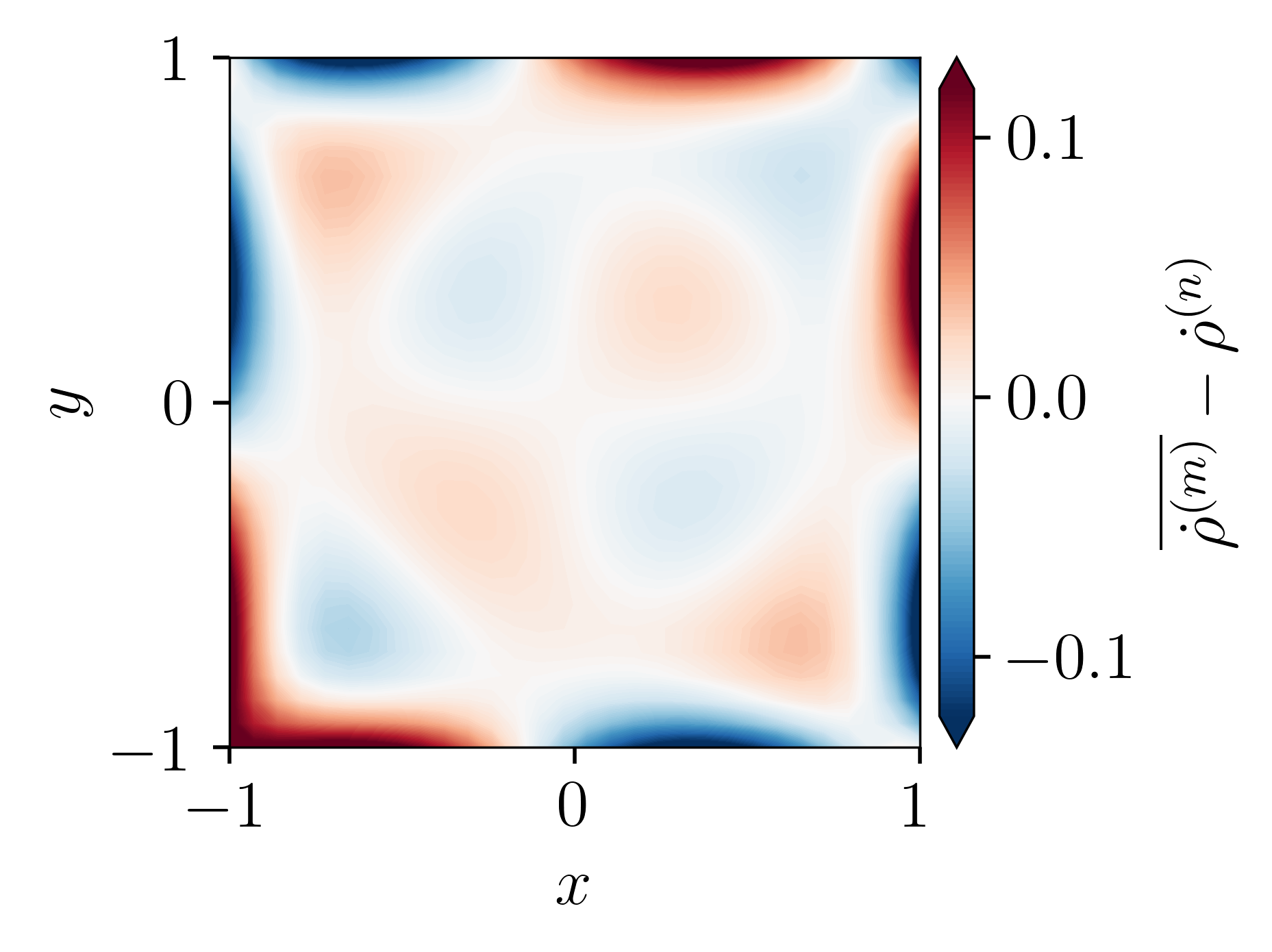}}
    % Subfigure (d)
    \subfloat[$c^{(n,m)}$]{\label{fig:c_nm_2d}\includegraphics[trim=0 10 25 0, clip, width=0.4\textwidth]{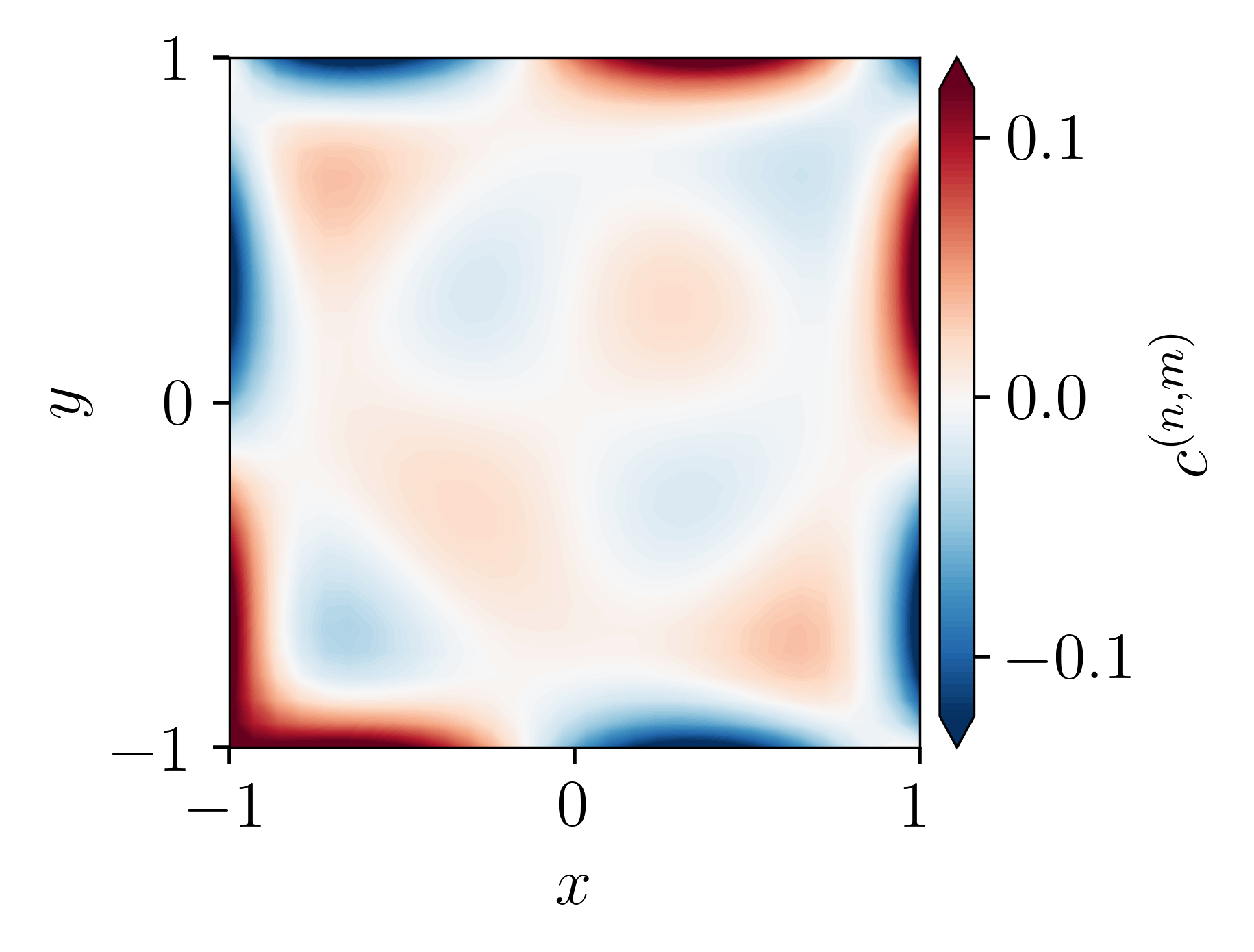}}

    \caption{The comparison between (a) projected high-order and (b) low-order time derivative for variable $\rho$; (c) true correction and (d) computed correction using formulation derived above. Here $m=9$ and $n=5$.}
    \label{fig:dQdt_correction_2d}
\end{figure}

\end{document}